\newtheorem{theorem}{Theorem}[section]
\newtheorem{lemma}{Lemma}[section]
\newtheorem{remark}{Remark}[section]
\newcommand{\se}{\setcounter{equation}{0}}
\newcommand{\cT}{\mathcal{T}}
\newcommand{\cL}{A}
\newcommand{\atwo}{{\beta}}
\newcommand{\aone}{{\alpha}}
\newcommand{\norm}[1]{{\left\vert\kern-0.25ex\left\vert\kern-0.25ex\left\vert #1 
    \right\vert\kern-0.25ex\right\vert\kern-0.25ex\right\vert}}
\title{Galerkin FEM for a time-fractional Oldroyd-B fluid problem} 
\author{Mariam Al-Maskari\thanks{Email: s70036@student.squ.edu.om} \quad and\quad 
Samir Karaa\thanks{Email: skaraa@squ.edu.om. This research is supported by The Research Council of Oman under grant ORG/CBS/15/001.} \\
\\
FracDiff Research Group, Department of Mathematics\\ Sultan Qaboos University, 
Al-Khod 123, Muscat, Oman
%P. O. Box 36, Al-Khod 123, Muscat, Oman
}
\begin{document}
\date{}
\maketitle
\begin{abstract}
{
We consider the numerical approximation of a  generalized fractional Oldroyd-B fluid problem involving two Riemann-Liouville fractional derivatives in time. We  establish regularity results for the exact solution
which play an important role in the error analysis. A semidiscrete scheme based on the piecewise  linear Galerkin finite element method in space is  analyzed and  optimal with respect to the data regularity error estimates are established.
Further, two fully discrete schemes based on convolution quadrature in time generated by the backward Euler and the second-order backward difference methods are investigated and  related error estimates for smooth and nonsmooth data are derived. Numerical experiments are performed with different values of the problem parameters to illustrate the efficiency of the method and confirm the theoretical results.
}
\end{abstract}

{\small{\bf Key words.} time-fractional Oldroyd-B fluid problem,   finite element method,  
convolution quadrature, error estimate,  nonsmooth data.
%backward Euler and second-order backward difference methods, 
}

\medskip
{\small {\bf AMS subject classifications.} 65M60, 65M12, 65M15}

%%%%%%%%%%%%%%%%%%%%%%%%%%%%%%%%%%%%%%%%%%%%%%%%%%%%%%%%%%%%%%%%%%%%%%%%%%%%%%%%%%%%%%%%%%%%
\section{Introduction}
\se 
Let $\Omega$ be a bounded convex polygonal domain in $\mathbb{R}^2$  with
a boundary $\partial \Omega$ and let  $T>0$ be a fixed time. 
We consider the initial boundary-value problem for the following time-fractional Oldroyd-B fluid equation
\begin{subequations}\label{main}
  \begin{align}
  &(1+a \partial_t^\aone)u_t(x,t) =\mu(1 +b \partial_t^\atwo) \Delta u(x,t)+f(x,t) \quad\mbox{ in }\Omega\times (0,T], \label{a1}\ \intertext{with a homogeneous Dirichlet boundary condition}
   &u(x,t)= 0 \quad\mbox{ on }\partial\Omega\times (0,T],   \label{a2}\     
 \intertext{and initial conditions}
   &  u(x,0)=v(x),\quad (I^{1-\alpha}u_t)(x,0)=0 \quad\mbox{ in }\Omega, \label{a3}
  \end{align}
\end{subequations}
%where $\Omega$ is a bounded convex polygonal domain in $\mathbb{R}^2$  with
%boundary $\partial \Omega$ and   $T>0$ is a fixed time. 
where $f$ and $v$ are given functions,  the parameters $\aone,\atwo\in(0,1)$, and $\mu$, $a$ and $b$  are positive constants. In \eqref{a1},  $u_t$ denotes the partial derivative of $u$ with respect to time, and   $\partial_t^\aone$ is the Riemann-Liouville  fractional derivative in time defined for $0<\alpha<1$ by
\begin{equation*} %\label{Ba}
\partial_t^{\alpha} \varphi(t):= \frac{d }{d t}I^{1-\alpha} \varphi(t):= \frac{d }{d t}\int_0^t\omega_{1-\alpha}(t-s)\varphi(s)\,ds\quad\text{with} \quad
\omega_{\alpha}(t):=\frac{t^{\alpha-1}}{\Gamma(\alpha)},
\end{equation*}
where $\Gamma(\cdot)$ is the Gamma function. 
%\textcolor{red}{
The second initial condition in \eqref{a3} is an appropriate condition for the model  equation \eqref{a1}, ensuring a unique solvability. This condition  replaces  the standard strong initial condition $u_t(x,0)=0$.
Indeed, one can show for instance that the solution $u$ satisfies (see Theorem 2.1)
$$u(t)=O(t^{\alpha-\beta+1})\quad \mbox{and}\quad   u_t(t)=O(t^{\alpha-\beta})\quad \mbox{as}\;  t\to 0.$$
 This indicates that when $\alpha<\beta$, the time derivative $u_t$ has a singularity at $t=0$,  and as a consequence,  the  condition $u_t(x,0)=0$ is not satisfied. However, we see that
 $$
 (I^{1-\alpha}u_t)(t)=O(t^{1-\beta})\quad \mbox{as}\;  t\to 0.
 $$
 Since $0<\beta<1$, we have $(I^{1-\alpha}u_t)(0)=0$, showing that our initial condition is valid for all $\alpha,\beta\in (0,1)$. We notice that similar arguments are presented in \cite{B2}.   
%} 

%\textcolor{red}{Regarding the second initial condition in \eqref{a3}, 
%we refer  \cite{B2}  where an interesting discussion on the  validity of this condition is presented.}
%we refer  \cite{pcB3} and \cite{B2},  where interesting discussions on the  validity of this condition are %presented.}
%assumption from  physical and mathematical points of view are made.}

The time-fractional problem \eqref{main} has received considerable attention in recent years due to its capacity of modeling a large class of  fluids for different values of $a$ and $b$. 
For example,   when $a=0$ and $b>0$, \eqref{main}  describes a Rayleigh-Stokes problem for a generalized fractional second-grade fluid \cite{B6,EJLZ2016,pcB10,pcB9,B3,14, pcB11,23}. This  plays an important role in investigating the behavior of some non-Newtonian fluids. The problem with $b=0$ and $a>0$ describes a generalized fractional Maxwell model \cite{M.Abdullah,Biomath,pcB1,maxwell,maxwell1}, while the case with $a=b=0$ corresponds to classical Newtonian fluids (classical diffusion). 

Analytical studies for different versions of problem \eqref{main}, in one and two spatial dimensions,  
have been presented by several authors, see, for instance \cite{B6,pcB1,pcB12,pcB13}. In most of these works, the solution is represented in a form of a series with the help of Fourier and Laplace transforms.
For the general case  ($a>0$ and $b>0$), the solution was derived using double Fourier sine series in \cite{pcB3,pcB4}. In \cite{pcB2}, the solution was represented in an integral form in terms of the Mittag-Leffler function.  The results are  formal as  the convergence of the series and the regularity of the solutions have not been considered.  

The numerical approximation of the model \eqref{main} has also attracted the attention of many authors.
The case with  $a=0$ (generalized second-grade fluid) has, in particular, been studied extensively in the literature, see \cite{EJLZ2016,pcB10,pcB9,B3,14, pcB11,23}. % To the best of our knowledge, there are no published numerical studies concerning the general case $a\neq 0$. 
In \cite{pcB10} and \cite{pcB9}, implicit and explicit finite difference schemes have been examined, for one and two-dimensional problems, and Fourier analysis has been conducted to analyze the stability and convergence of the methods. In \cite{23} and \cite{B3}, an implicit numerical approximation method is developed by transforming the problem into an integral equation. In \cite{14}, the authors have investigated a numerical scheme derived by the reproducing kernel technique, and in \cite{pcB11} a compact finite difference method and a radial basis function method was proposed.  
 The convergence analysis in most of these studies requires the solution $u$ of problem 
\eqref{main} to be sufficiently regular, including at $t=0$, which is not practically the case. Most recently in \cite{EJLZ2016},   Jin et al. have considered the numerical approximation of the solution of the homogeneous problem \eqref{main} with $a=0$ by piecewise linear finite elements in space and convolution quadrature in time, and have derived optimal error estimates with respect to the solution smoothness, expressed through the initial data $v$.  

Numerical studies for the general case when $a\neq 0$ in  \eqref{main}   are still limited. 
In \cite{B1}, the authors have derived explicit and implicit schemes in time based on the Gr\"{u}nwald-Letnikov approximation of  fractional derivatives, and performed numerical experiments to investigate the behavior of the solution. In \cite{B2},  the problem is reformulated as a Volterra integral equation,   where its kernel is represented in terms of Mittag-Leffler functions.  
 A special attention has been given to the behavior of the solution. In \cite{B5}, finite difference schemes  of second- and  fourth-order accuracy in space are  proposed with a second-order convolution quadrature in time.  An ADI algorithm was then developed for the computation of the numerical solution. In these studies, the theoretical analysis of the error is not presented.
In \cite{pcB5}, the authors have used a  standard Galerkin finite element procedure in space and the famous $L1$-scheme in time to approximate the solution of \eqref{a1}. They also obtained error estimates under high regularity assumptions on the exact solution.  It is worth mentioning that a time-fractional differential equation involving two Riemann-Liouville fractional derivatives was investigated in \cite{ZXW-2017,S.karaa}. 
Similar to \eqref{a1} with $a=0$, the equation was considered with only one initial condition $u(x,0)=v(x)$.

The aim of this paper is to develop a Galerkin FEM for problem \eqref{main} and derive optimal with respect to data regularity error estimates. Our analysis is based on exploiting Laplace transform tools with semigroup type properties of the FE solution operator. Further, we investigate two fully discrete schemes for the semidiscrete FE problem 
based on convolution quadrature in time generated by the backward Euler and 
the second-order backward difference methods. Error estimates with respect to the data regularity 
are established, see  Theorems \ref{thm:BE} and \ref{thm:SBD}. Compared to \cite{EJLZ2016}, where $a$ and $f$ in \eqref{a1} are both zeros, we have derived error estimates in the $H^1(\Omega)$-norm as well as in the $L^\infty(\Omega)$-norm. We have further analyzed the inhomogeneous problem and obtained new error estimates.

The  paper is organized as follows: In Section 2,  we present the solution theory of the mathematical model  \eqref{main} and derive properties of the solution operator, which  will play an important role in our subsequent error analysis. In Section \ref{sec:FE}, we introduce the semidiscrete piecewise linear FE scheme and derive optimal error estimates for  the homogeneous  and inhomogeneous problems. % is considered in Section  \ref{sec:inh} where new error estimates are established. For  $q=0$, i.e., $v\in L^2(\Omega)$,  we show an optimal error bound under  an additional  assumption on the mesh. Superconveregence result is proved in  Subsection \ref{sec:Linfty} and as a consequence, a quasi-optimal error estimate is established in the $L^\infty(\Omega)$-norm. 
In Section \ref{sec:discrete}, two fully discrete schemes  based on convolution quadrature 
in time are considered.
% and establish optimal error estimates. 
%Section \ref{sec:extensions} focuses on possible generalization of the present FVE error analysis to 
%various types of time-fractional  evolution problems.  
Finally, in Section \ref{sec:NE}, we conduct  numerical experiments  to validate  the theoretical results.

Throughout the paper, $C$ and $c$ denote generic positive constants that may depend on $\aone$, $\atwo$, $\mu,$ $b$ and $a$,  but are independent of the  mesh  size $h$ and the time step  $\tau$.

%%%%%%%%%%%%%%%%%%%%%%%%%%%%%%%%%%%%%%%%%%%%%%%%%%%%%%%%%%%%%%%%%%%%%%%%%%%%%%%%%%%%%%%%%%%
\section{Regularity results} \label{sec:notation}
\se

To express the regularity properties of the solution of problem \eqref{main}, we introduce the Hilbert space 
$\dot H^r(\Omega)\subset L^2(\Omega)$ induced by the norm 
$$
\|v\|_{\dot H^r(\Omega)}^2 = \|A^{r/2}v\|^2=\sum_{j=1}^\infty \lambda_j^r (v,\phi_j)^2,
$$
where $(\cdot,\cdot)$ is the inner product on $L^2(\Omega)$, $\|\cdot\|$ is the induced norm, and
 $\{\lambda_j\}_{j=1}^\infty$ and $\{\phi_j\}_{j=1}^\infty$ are, respectively,  the Dirichlet eigenvalues and 
eigenfunctions of  $A:=-\Delta$, with $\{\phi_j\}_{j=1}^\infty$ being an  orthonormal basis in $L^2(\Omega)$.
Then, for $r$ a nonnegative integer, we have
$
\dot{H}^r(\Omega)=\{\chi \in H^r(\Omega);\, \cL^j\chi=0 \text{ on } \partial \Omega, \text{ for } j<r/2\}$, 
see \cite[Lemma 3.1]{thomee1997}. In particular, $\|v\|_{\dot H^0(\Omega)}=\|v\|$ is the norm on $L^2(\Omega)$, 
$\|v\|_{\dot H^1(\Omega)}=\|\nabla v\|$ is also the norm on $H_0^1(\Omega)$ and $\|v\|_{\dot H^2(\Omega)}=\|A v\|$ is the equivalent 
norm in $H^2(\Omega)\cap H^1_0(\Omega)$. Note that $\{\dot H^r(\Omega)\}$, $r\geq 0$,  form a Hilbert scale of interpolation spaces.

We shall now  derive an integral representation of the solution $u$ of \eqref{main} and  describe the smoothing properties of the solution operator. For a given  $\theta\in (\pi/2,\pi)$,  we denote by $\Sigma_{\theta}$ the sector
$\{z\in \mathbb{C}, \,z\neq 0,\, |\arg z|< \theta\}.$
Since $A$ is selfadjoint and positive definite, its resolvant
$(z I+A)^{-1}:L^2(\Omega)\to L^2(\Omega)$ satisfies 
\begin{equation}\label{res0}
\|(z I+A)^{-1}\|\leq M_{\theta} |z|^{-1} \quad \forall z\in \Sigma_\theta,
\end{equation}
where $M_{{\theta}}=1/\text{dist}(\Sigma_\theta,-1)=1/\sin(\pi-\theta)$.
Here, and in what follows,  we use the same notation $\|\cdot\|$ to denote the operator norm from $L^2(\Omega)\to L^2(\Omega)$.
 We shall employ the  Laplace transform 
$\hat{u}:=\mathcal{L}(u)$ defined by
$ \hat{u}(x,z)=\int_0^\infty e^{-zt}u(x,t)\,dt, $
and its inverse  
$u(x,t)=\frac{1}{2\pi i}\int_{{\mathcal C}} e^{zt}\hat{u}(x,z)\,dz$, 
where the contour of integration ${\mathcal C}$ is any line in the right-half plane parallel to the imaginary axis  and with Im$(z)$ increasing. By  noting that 
$$\widehat{\partial_t^\aone u_t}(z)=z^\aone \hat{u}_t(z)-(I^{1-\aone}u_t)(0),$$
and using the %fact that $ u_t(0)<\infty$, that is  $I^{1-\aone}u_t(0)=0,$ 
 condition $(I^{1-\aone}u_t)(0)=0$,
an application of the Laplace transform to (\ref{a1}) yields
$$(1+a z^{\aone})\hat{u}_t(z)=-\mu ( 1+b z^{\atwo})A\hat{u}(z)+\hat f(z).$$
Substituting $\hat{u}_t(z)$ by $z\hat{u}(z)-v$, we find that
\begin{equation*}%\label{m1}
\left((z+a z^{\aone+1})I+\mu ( 1+b z^{\atwo})A\right)\hat{u}(z)=(1+a z^\aone)v+\hat f(z),
\end{equation*}
or
$$\mu ( 1+b z^{\atwo})\left(\dfrac{z+a z^{\aone+1}}{\mu ( 1+b z^{\atwo})}I+A\right)\hat u(z)=(1+a z^\aone)v+\hat f(z).$$
Then, we formally have
\begin{equation}\label{m1-0}
\hat{u}(z)= \hat{E}(z)\left(v+\dfrac{1}{1+a z^{\aone}}\hat{f}\right), 
\end{equation}
where 
\begin{equation}\label{m2}
 \hat{E}(z) := \dfrac{g(z)}{z}\left(g(z)I+A\right)^{-1},
 %, \qquad g(z) := z^{\atwo-1} (z+\gamma z^{1-\aone}).
%\hat{u}(z)= \hat{E}(z)v, \quad \hat{E}(z) := z^{\atwo-1}(g(z)I+A)^{-1},
\end{equation}
and $g(z) := \mu^{-1}(z+a z^{\aone+1})/( 1+b z^{\atwo})$. Note that we can recast problem \eqref{main} with $f=0$ to a Volterra integral equation of the form
\begin{equation}\label{volterra}
u(x,t)=v-\int_0^t k(t-\tau)Au(x,\tau)\, d\tau.
\end{equation}
Indeed, by applying the Laplace transform to \eqref{volterra}, we get
\begin{equation}\label{Lvolterra}
\hat{u}(z)=\frac{1}{z}(1+\hat{k}(z)A)^{-1}v.
\end{equation}
Comparing \eqref{Lvolterra} to \eqref{m1-0}, we obtain by the uniqueness of the Laplace transform that equation \eqref{main} is equivalent to \eqref{volterra} with the kernel $k(t)$ satisfying $\hat{k}(z)=g(z)^{-1}.$ In the next lemma, we state a basic property of the function $g(z)$.
\begin{lemma}\label{lem:g(z)}
Let $\theta \in (\pi/2,\frac{\pi}{1+\aone})$ be fixed and set $\bar{\theta} =(1+\aone)\theta<\pi$. Then, for any $z\in  \Sigma_\theta$, $g(z) \in \Sigma_{\bar{\theta}}$  and
\begin{equation}\label{g1} 
|g(z)|\leq M_\theta \frac{1}{\mu} (|z|+a|z|^{\aone +1}) 
\min\left\{1,\frac{1}{b}|z|^{-\atwo}  \right\}.%\quad \forall z\in \Sigma_\theta.
\end{equation}
%
%\begin{equation}\label{g1-n} 
%|g(z)|\leq M_\theta \frac{1}{\mu} \min\left\{|z|+a|z|^{\aone +1},\frac{1}{b}(|z|^{1-\atwo}+a|z|%^{1+\aone-\atwo})\right\}\quad \forall z\in \Sigma_\theta.
%\end{equation}
\end{lemma}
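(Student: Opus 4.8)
\emph{Proof plan.} The plan is to write
\[
g(z)=\frac1\mu\cdot\frac{z\,(1+a z^{\aone})}{1+b z^{\atwo}}
\]
and to estimate the argument and the modulus of the three factors $z$, $1+az^{\aone}$ and $1+bz^{\atwo}$ separately. Since $\mu,a,b,\aone,\atwo$ are real we have $g(\bar z)=\overline{g(z)}$, and $\bar z\in\Sigma_\theta$ whenever $z\in\Sigma_\theta$, so it is enough to treat $z\in\Sigma_\theta$ with $\psi:=\arg z\in[0,\theta)$.

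For the argument I would use the elementary observation that for $w\in\mathbb C$ with $\arg w\in[0,\pi]$ the point $1+w$ lies in the closed sector $\{\zeta:0\le\arg\zeta\le\arg w\}$, which is convex (being the intersection of two half-planes through the origin) and contains both $1$ and $w$; consequently $0\le\arg(1+w)\le\arg w$. Applying this to $w=az^{\aone}$ and $w=bz^{\atwo}$, whose arguments are $\aone\psi$ and $\atwo\psi$ with $\aone\psi<\aone\theta<\pi$ and $\atwo\psi<\atwo\theta<\theta<\pi$, yields
\[
\arg g(z)=\psi+\arg(1+az^{\aone})-\arg(1+bz^{\atwo})\in[(1-\atwo)\psi,\,(1+\aone)\psi]\subseteq[0,(1+\aone)\theta),
\]
no wrap-around occurring because all partial sums stay in $[0,\bar\theta)\subseteq[0,\pi)$. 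Together with $g(z)\neq0$ (the numerator is a product of nonzero factors and the denominator is finite and nonvanishing, since $\arg(bz^{\atwo})$ avoids $\pm\pi$) this gives $g(z)\in\Sigma_{\bar\theta}$. I expect this to be the main obstacle: the crude bound $|\arg g(z)|\le|\arg(z+az^{\aone+1})|+|\arg(1+bz^{\atwo})|$ only gives $(1+\aone+\atwo)\theta$, which can exceed $\pi$, so one has to exploit that for $\psi\ge0$ all three summands above are nonnegative and that subtracting $\arg(1+bz^{\atwo})$ improves the lower bound, using $\atwo<1$.

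For the modulus, $|z+az^{\aone+1}|=|z|\,|1+az^{\aone}|\le|z|(1+a|z|^{\aone})=|z|+a|z|^{\aone+1}$ disposes of the numerator, and since $\min\{1,\tfrac1b|z|^{-\atwo}\}=\max\{1,b|z|^{\atwo}\}^{-1}$ it remains to prove $|1+w|\ge M_\theta^{-1}\max\{1,|w|\}$ for $w=bz^{\atwo}\in\overline{\Sigma_{\atwo\theta}}$. Writing $w=\rho e^{i\phi}$ with $|\phi|<\atwo\theta<\pi$, I would split into $\cos\phi\ge0$, where $|1+w|^2\ge1+\rho^2\ge\max\{1,\rho^2\}$, and $\cos\phi<0$, which forces $\atwo\theta>\pi/2$ and where minimizing $|1+w|^2=1+2\rho\cos\phi+\rho^2$ over $\rho\in(0,1]$ and over $\rho^{-1}\in(0,1]$ when $\rho\ge1$ gives $|1+w|^2\ge\sin^2(\pi-\atwo\theta)\max\{1,\rho^2\}$; finally $\sin(\pi-\atwo\theta)\ge\sin(\pi-\theta)=M_\theta^{-1}$ because $0<\pi-\theta<\pi-\atwo\theta<\pi/2$ and $\sin$ is increasing there. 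Combining the numerator and denominator estimates with the prefactor $\mu^{-1}$ gives \eqref{g1}. This part is routine once the correct sector $\overline{\Sigma_{\atwo\theta}}$ is identified and $\atwo<1$, $\theta>\pi/2$ are used in the final comparison.
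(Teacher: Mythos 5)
Your proof is correct and follows essentially the same route as the paper: the modulus bound is the identical distance-to-sector argument ($|1+bz^{\atwo}|\geq \sin(\pi-\theta)\max\{1,b|z|^{\atwo}\}$, made explicit by your case split on $\cos\phi$), and the sector claim $\arg g(z)\in[(1-\atwo)\psi,(1+\aone)\psi]$ is the same conclusion the paper reaches, only obtained by tracking arguments of the three factors rather than by rationalizing the denominator and observing that the four resulting numerator terms lie in a common convex sector. Both versions are sound; no gap.
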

\begin{proof} 
Let $z\in \Sigma_\theta$, i.e.,  $z=re^{i\phi}$ with $|\phi|<\theta$ and  $r>0$. Then, % as  $g(z) = \dfrac{z+a z^{\aone+1}}{\mu ( 1+b z^{\atwo})}$,
$$g(z)= \dfrac{r e^{i\phi} +a r^{1+\aone}e^{i(1+\aone)\phi}+b r^{1+\atwo}e^{i(1-\atwo)\phi}+a b r^{1+\aone +\atwo}e^{i(1+\aone-\atwo)\phi}}{\mu\left((1+b r^{\atwo}\cos (\atwo \phi))^2+b^2 r^{2\atwo}\sin^2 (\atwo \phi)\right)},$$
which shows that  $g(z)\in \Sigma_{\bar{\theta}}$ since  the four terms in the numerator are in $\Sigma_{\bar{\theta}}$ and their imaginary parts have the same sign. 
To prove \eqref{g1},  we note that
$$
|g(z)|= \dfrac{|z+a z^{\aone+1}|}{\mu | 1+b z^{\atwo}|}\leq \dfrac{|z+a z^{\aone+1}|}{\mu \text{dist}(-1,b z^{\atwo})}
\leq\frac{ M_{{\theta}}}{\mu} \left(|z|+a |z|^{\aone+1}\right).
$$ 
We also have
$$
|g(z)|= \frac{|z+a z^{\aone+1}|} {\mu b |z|^{\atwo} \text{dist}(-1, b^{-1}z^{-\beta} )}\leq
\frac{ M_{{\theta}}}{b\mu} \left(|z|^{1-\atwo}+a |z|^{1+\aone-\atwo}\right).
$$
This completes the proof of the lemma.
\end{proof}

Now, using the resolvent estimate (\ref{res0}), we have 
\begin{equation}\label{res1}
\|(g(z) I+A)^{-1}\|\leq M_{\bar{\theta}} |g(z)|^{-1} \quad \forall z\in \Sigma_\theta.
\end{equation}
Then, from the definition of $\hat E(z)$ in (\ref{m2}), we get
\begin{equation}\label{m3} 
\|\hat{E}(z)\|\leq  M_{\bar\theta} |z|^{-1}\quad \forall z\in \Sigma_\theta.
\end{equation}
Hence, by means of  the inverse Laplace transform,  we deduce that the solution operator is given by
\begin{equation}\label{mm}
E(t)=\frac{1}{2\pi i}\int_{\Gamma_{\theta,\delta}} e^{zt}\hat{E}(z)\,dz,
%u(t)=\frac{1}{2\pi i}\int_{\Gamma_{\theta,\delta}} e^{zt}\hat{E}(z)\,dz \, v=:E(t)v.
%u(t)=\int_\Gamma_{\theta,\delta}} e^{zt}\hat{u}(z)\,dz,
\end{equation}
where,  for  $\theta\in (\pi/2,\pi/(1+\alpha))$ and $\delta>0$, 
$\Gamma_{\theta,\delta}:=\{\rho e^{\pm i\theta}:\rho\geq \delta\}\cup\{\delta e^{i\psi}: |\psi|\leq \theta\}$
is the contour  oriented with an increasing imaginary part. 
%Then, the well-posedness of the problem \eqref{main} with $f=0$ follows now from  \eqref{m3}. 
Hence, by \eqref{m3} and \cite[Theorem 2.1 and Corollary 2.4]{integral}, we conclude that, 
for the case  $f=0$ and $v\; \in \; L^2(\Omega)$, there exists a unique solution $u$ to \eqref{main} given by $u(t)=E(t)v$ and satisfying 
$$u\in\;C([0,T];\; L^2(\Omega))\cap C(\left(0,T\right];\; \dot{H}^2(\Omega)).$$
To get more details on the smoothing properties of the solution operator,  we first state some properties for the operator $\hat{E}(z)$.
\begin{lemma}\label{lem:Ah} The following estimates hold:
\begin{equation}\label{00}
  \|A\hat{E}(z)\chi\|\leq C_{\theta} |z|^{-1}|g(z)|^{(2-p)/2} \|\chi\|_{\dot H^p(\Omega)} \quad \forall z\in\Sigma_\theta,\quad 0\leq p\leq 2, 
\end{equation} 
\begin{equation}\label{11}
%\|\nabla\hat{E}(z)\chi\|\leq C_{\theta} |z|^{\atwo-1}|g(z)|^{-1/2}  \|\chi\| \quad \forall z\in\Sigma_\theta,
\|A^\nu \hat{E}(z)\chi\|\leq C_{\theta} |z|^{-1}|g(z)|^{\nu}  \|\chi\| \quad \forall z\in\Sigma_\theta,
\quad 0\leq \nu\leq 1,
\end{equation} 
 where $C_{\theta}$ depends only on $\theta$.
\end{lemma}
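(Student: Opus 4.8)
The plan is to estimate $A\hat E(z)\chi$ and $A^\nu\hat E(z)\chi$ directly from the representation $\hat E(z)=\frac{g(z)}{z}(g(z)I+A)^{-1}$, combining the resolvent bound \eqref{res1} with spectral (interpolation) inequalities for fractional powers of $A$. The key algebraic identity I would use is
\begin{equation*}
A(g(z)I+A)^{-1}=I-g(z)(g(z)I+A)^{-1},
\end{equation*}
so that $A\hat E(z)=\frac{g(z)}{z}\bigl(I-g(z)(g(z)I+A)^{-1}\bigr)$. For the endpoint $p=2$ this gives $\|A\hat E(z)\chi\|\le \frac{|g(z)|}{|z|}\bigl(1+|g(z)|\,M_{\bar\theta}|g(z)|^{-1}\bigr)\|A\chi\|\le C_\theta|z|^{-1}\|\chi\|_{\dot H^2}$, and for the endpoint $p=0$ one has $\|A\hat E(z)\chi\|=\frac{|g(z)|}{|z|}\|A(g(z)I+A)^{-1}\chi\|$; since $\|A(g(z)I+A)^{-1}\|\le 1+\|g(z)(g(z)I+A)^{-1}\|\le 1+M_{\bar\theta}$ (using that $g(z)\in\Sigma_{\bar\theta}$ by Lemma \ref{lem:g(z)}, so \eqref{res1} applies), we get $\|A\hat E(z)\chi\|\le C_\theta\frac{|g(z)|}{|z|}\|\chi\|$, matching the claimed $|g(z)|^{(2-p)/2}$ at $p=0$. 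The intermediate range $0<p<2$ then follows by interpolation between these two endpoints, noting that $\{\dot H^r(\Omega)\}$ is a Hilbert scale as recalled in the text; alternatively one argues spectrally, writing $A\hat E(z)\chi=\sum_j \frac{g(z)}{z}\frac{\lambda_j}{g(z)+\lambda_j}(\chi,\phi_j)\phi_j$ and bounding $\frac{\lambda_j}{|g(z)+\lambda_j|}=\Bigl(\frac{\lambda_j}{|g(z)+\lambda_j|}\Bigr)^{p/2}\Bigl(\frac{\lambda_j}{|g(z)+\lambda_j|}\Bigr)^{(2-p)/2}\le (1+M_{\bar\theta})\,\lambda_j^{p/2}\bigl(M_{\bar\theta}|g(z)|^{-1}\bigr)^{(2-p)/2}$, which after collecting constants and using the definition of $\|\cdot\|_{\dot H^p}$ yields \eqref{00}.

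For \eqref{11} I would argue similarly, spectrally, using $\|A^\nu\hat E(z)\chi\|^2=\sum_j \frac{|g(z)|^2}{|z|^2}\frac{\lambda_j^{2\nu}}{|g(z)+\lambda_j|^2}(\chi,\phi_j)^2$. The point is the elementary scalar bound, valid for $\lambda>0$ and $w\in\Sigma_{\bar\theta}$,
\begin{equation*}
\frac{\lambda^{\nu}}{|w+\lambda|}\le \frac{C}{|w|^{1-\nu}}\qquad(0\le\nu\le 1),
\end{equation*}
which again comes from interpolating the two trivial cases $\nu=0$ (this is $\frac{1}{|w+\lambda|}\le M_{\bar\theta}|w|^{-1}$) and $\nu=1$ (this is $\frac{\lambda}{|w+\lambda|}\le 1+|w|\,|w+\lambda|^{-1}\le 1+M_{\bar\theta}$), i.e.\ writing $\frac{\lambda^\nu}{|w+\lambda|}=\bigl(\frac{\lambda}{|w+\lambda|}\bigr)^{\nu}\bigl(\frac{1}{|w+\lambda|}\bigr)^{1-\nu}$. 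Multiplying by $|g(z)|/|z|$ and summing gives $\|A^\nu\hat E(z)\chi\|\le C_\theta |z|^{-1}|g(z)|^{\nu}\|\chi\|$, which is \eqref{11}.

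I do not expect a serious obstacle here; the only point requiring a little care is making sure that the resolvent estimate \eqref{res1} is actually available on the contour we integrate over, i.e.\ that $g(z)\in\Sigma_{\bar\theta}$ with $\bar\theta<\pi$ so that $-1$ (equivalently $-g(z)/|g(z)|$) stays a bounded distance from the spectrum of $A$ — but this is exactly the content of Lemma \ref{lem:g(z)}, so it is already in hand. The secondary bookkeeping issue is tracking that all constants depend only on $\theta$ (through $M_\theta$, $M_{\bar\theta}$, and $\bar\theta=(1+\alpha)\theta$) and not on $z$ or on $\chi$; this is automatic from the above since $a,b,\mu$ enter only through $g$, which has already been absorbed into the stated powers $|g(z)|^{(2-p)/2}$ and $|g(z)|^\nu$. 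Finally, one should remark that the interpolation step for \eqref{00} can be bypassed entirely in favour of the direct spectral estimate, which is cleaner and avoids invoking interpolation of operators on the scale $\dot H^r$.
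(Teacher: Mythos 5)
Your overall skeleton --- prove the two endpoints $p=0$ and $p=2$ of \eqref{00} and interpolate, and obtain \eqref{11} from the analogous interpolation between $\nu=0$ and $\nu=1$ --- is exactly the paper's argument, and your $p=0$ endpoint and your treatment of \eqref{11} are correct. However, your $p=2$ endpoint fails as written. From the identity $A\hat E(z)=\frac{g(z)}{z}\bigl(I-g(z)(g(z)I+A)^{-1}\bigr)$ one can only conclude $\|A\hat E(z)\chi\|\le \frac{|g(z)|}{|z|}(1+M_{\bar\theta})\|\chi\|$, which is the $p=0$ bound; the chain you display, ending in $\le C_\theta|z|^{-1}\|\chi\|_{\dot H^2(\Omega)}$, silently discards the factor $|g(z)|$, which is unbounded on $\Sigma_\theta$ (indeed $|g(z)|\sim (a/(\mu b))|z|^{1+\aone-\atwo}\to\infty$ as $|z|\to\infty$). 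The correct $p=2$ endpoint does not come from that identity at all: one uses that $A$ commutes with $\hat E(z)$ on $\dot H^2(\Omega)$, so $A\hat E(z)\chi=\hat E(z)A\chi$, and then \eqref{m3} gives $\|A\hat E(z)\chi\|\le M_{\bar\theta}|z|^{-1}\|A\chi\|$. This is the one missing ingredient; once it is in place, the interpolation you invoke does deliver \eqref{00}, exactly as in the paper.

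Your ``alternative'' spectral route to \eqref{00} also has the exponents mis-assigned: the displayed inequality $\frac{\lambda_j}{|g(z)+\lambda_j|}\le(1+M_{\bar\theta})\,\lambda_j^{p/2}\bigl(M_{\bar\theta}|g(z)|^{-1}\bigr)^{(2-p)/2}$ is false (take $p=0$ and $\lambda_j\gg|g(z)|$ with $|g(z)|$ large: the left side is of order $1$ while the right side is $O(|g(z)|^{-1})$), and even if it held it would produce $|g(z)|^{p/2}$ rather than $|g(z)|^{(2-p)/2}$ after multiplying by $|g(z)|/|z|$. The correct split is $\frac{\lambda_j}{|g+\lambda_j|}=\bigl(\frac{\lambda_j}{|g+\lambda_j|}\bigr)^{(2-p)/2}\bigl(\frac{\lambda_j}{|g+\lambda_j|}\bigr)^{p/2}\le(1+M_{\bar\theta})^{(2-p)/2}\bigl(M_{\bar\theta}\lambda_j|g(z)|^{-1}\bigr)^{p/2}$, which yields the required $|z|^{-1}|g(z)|^{1-p/2}\lambda_j^{p/2}=|z|^{-1}|g(z)|^{(2-p)/2}\lambda_j^{p/2}$ --- note this is precisely the pattern you applied correctly in your scalar bound for \eqref{11}.
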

\begin{proof} Since $A$ commutes with $\hat E(z)$ on $\chi\in \dot H^2(\Omega)$,  we have by \eqref{m3},
\begin{equation}\label{res-3}
\|\hat E(z)A\chi\|\leq M_{\bar\theta} |z|^{-1} \|A\chi\|\quad \forall \chi\in \dot H^2(\Omega).
\end{equation}
On the other hand, by noting that
\begin{equation*}%\label{res-4}
g(z)\hat{E}(z)=\frac{g(z)}{z}I-A\hat{E}(z),
\end{equation*}
that is
\begin{equation}\label{res-E}
A\hat E(z) = \frac{g(z)}{z}I-g(z)\hat{E}(z),
\end{equation}
we conclude that
\begin{equation}\label{res-5}
\|A\hat E(z)\chi\|\leq  (1+M_{\bar\theta}) |z|^{-1}|g(z)| \|\chi\|\quad \forall \chi\in  L^2(\Omega).
%\|A(g(z) I+A)^{-1}\chi\|\leq (1+M_\theta)  \|\chi\|\quad \forall \chi\in  L^2(\Omega).
\end{equation}
The estimate \eqref{00} follows by  interpolating \eqref{res-3} and \eqref{res-5} for $p\in [0,2]$. 
%$$
%\|A(g(z) I+A)^{-1}\chi\|\leq C_\theta |g(z)|^{-p/2} |\chi|_p \quad \forall z\in\Sigma_\theta,\quad 0\leq p\leq 2, 
%$$
The second estimate \eqref{11} follows by interpolating (\ref{m3}) and \eqref{res-5} for $\nu\in [0,1]$, which completes the proof.
%from the fact that
%$$
%\|\nabla (g(z) I+A)^{-1}\chi\|\leq C_\theta |g(z)|^{-1/2}  \|\chi\| \quad \forall \chi\in  L^2(\Omega), 
%$$
%see (2.13) in \cite{Lubich-2006}.
\end{proof}

%In the next theorem  we  discuss the smoothing and regularity properties for problem \eqref{main} with $f= 0$.

Based on Lemma \ref{lem:Ah}, stability and smoothing properties  of the solution operator $E(t)$ are established in the following Theorem.

\begin{theorem}\label{th:Reg-u}%TTTTTTTTTTTTTTTTTTTTTTTTTTTTTTTTTTTTTTTTTTTTTTTTTTTTTTTTTTTTTTTTTTTTTTT
The following estimates hold for $t\in(0,T]$ and $\nu=0,\,1$:
%, the problem \eqref{main} has a unique solution 
%$$
%u\in C([0,T];L^2(\Omega))\cap  C((0,T];\dot H^2(\Omega)).
%$$
%Moreover, the following stability estimates hold for $t\in(0,T]$ and $p=0,1$:
\begin{equation}\label{reg-u-1}
\|A^\nu E^{(m)}(t)v\| \leq C t^{-m -\nu(\aone-\atwo +1)}\|v\|,\quad v\in L^2(\Omega),\, m\geq 0, 
\end{equation}
\begin{equation}\label{reg-u-2}
\|A^\nu E^{(m)}(t)v\| \leq C t^{-m+(1-\nu)(\aone-\atwo+1)}\|Av\|,\quad v\in \dot H^2(\Omega),\, \nu+m\geq 1, 
\end{equation}
where $C$ is a constant depending on  $\aone$, $\atwo$, $\mu$, $a$, $b$ and $T$.
\end{theorem}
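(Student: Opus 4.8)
The plan is to use the contour representation \eqref{mm} together with the resolvent-type bounds of Lemma~\ref{lem:Ah}, choosing the contour $\Gamma_{\theta,\delta}$ with $\delta\sim 1/t$ as is standard for estimates of convolution-quadrature/subdiffusion type. Differentiating \eqref{mm} $m$ times under the integral sign gives
$$
A^\nu E^{(m)}(t)v=\frac{1}{2\pi i}\int_{\Gamma_{\theta,\delta}} e^{zt}\,z^m\,A^\nu\hat{E}(z)v\,dz,
$$
so that $\|A^\nu E^{(m)}(t)v\|\le C\int_{\Gamma_{\theta,\delta}} e^{\mathrm{Re}(z)t}|z|^m\|A^\nu\hat{E}(z)v\|\,|dz|$. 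For \eqref{reg-u-1} I would insert \eqref{11} with the given $\nu\in\{0,1\}$ to obtain the integrand bound $e^{\mathrm{Re}(z)t}|z|^{m-1}|g(z)|^\nu\|v\|$, and for \eqref{reg-u-2} I would use \eqref{00} with $p=2$, i.e.\ $\|A\hat{E}(z)\chi\|\le C|z|^{-1}\|A\chi\|$ when $\nu=1$, and for $\nu=0$ use \eqref{res-3} after writing $A^0E^{(m)}(t)v=A^{-1}(A E^{(m)}(t)v)$ is not available, so instead I would keep $v\in\dot H^2$ and use $\|\hat E(z)A v\|\le M_{\bar\theta}|z|^{-1}\|Av\|$ from \eqref{res-3} — giving integrand $e^{\mathrm{Re}(z)t}|z|^{m-1}\|Av\|$; the required extra power $t^{(1-\nu)(\alpha-\beta+1)}$ then has to come from an interpolation between this and the $\nu=1$ estimate, or more directly from \eqref{00} with intermediate $p$.

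The quantitative heart of the argument is the behaviour of $|g(z)|$ on the contour. From $g(z)=\mu^{-1}(z+az^{\alpha+1})/(1+bz^\beta)$ one reads off two regimes: for $|z|=\rho\ge\delta$ large, $|g(z)|\sim \rho^{\alpha+1-\beta}$ (the $az^{\alpha+1}$ and $bz^\beta$ terms dominate), while for $|z|=\rho$ small, $|g(z)|\sim\rho$; Lemma~\ref{lem:g(z)} furnishes exactly the upper bounds $|g(z)|\le C(|z|+a|z|^{\alpha+1})\min\{1,b^{-1}|z|^{-\beta}\}$, and a matching lower bound of the same shape can be extracted since $|1+bz^\beta|\le 1+b\rho^\beta$ and $|z+az^{\alpha+1}|\ge c(\rho+a\rho^{\alpha+1})$ on $\Sigma_\theta$. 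Plugging $|g(z)|^\nu\lesssim |z|^{\nu(\alpha-\beta+1)}$ for $|z|\ge1$ and $|g(z)|^\nu\lesssim|z|^\nu$ for $|z|\le1$ into the integral, splitting $\Gamma_{\theta,\delta}$ into the two rays and the arc, and using $\mathrm{Re}(z)\le -c|z|$ on the rays (since $\theta>\pi/2$), the ray integrals reduce to $\int_\delta^\infty e^{-c\rho t}\rho^{m-1+\nu(\alpha-\beta+1)}\,d\rho$ and the arc contributes $\delta^{m+\nu(\alpha-\beta+1)}$; with $\delta=1/t$ the substitution $\rho=s/t$ yields precisely $t^{-m-\nu(\alpha-\beta+1)}$, which is \eqref{reg-u-1}. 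The same scaling with the $\dot H^2$ bounds gives \eqref{reg-u-2}, the exponent $-m+(1-\nu)(\alpha-\beta+1)$ appearing because one power of $A$ is "already paid for'' by the $\|Av\|$ norm, so the effective exponent of $|g(z)|$ is $(2-p)/2$ with $p=2\nu$ when starting from $\dot H^2$.

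The main obstacle is bookkeeping the small-$|z|$ part of the contour, i.e.\ near the arc $\{\delta e^{i\psi}\}$ with $\delta=1/t$ when $t$ is close to $T$ (so $\delta$ is of moderate size, not small), where the two regimes of $|g(z)|$ meet and neither asymptotic is cleanly valid; here one must use the uniform two-sided bound $c\min\{\rho,\rho^{\alpha-\beta+1}\}\le|g(z)|\le C\max\{\rho,\rho^{\alpha-\beta+1}\}$ and absorb the resulting constants into $C=C(\alpha,\beta,\mu,a,b,T)$, which is why $T$ is allowed to enter the constant. A secondary technical point is justifying differentiation under the integral and the deformation of the original vertical contour $\mathcal C$ to $\Gamma_{\theta,\delta}$, which is legitimate because $\hat E(z)$ is analytic on $\Sigma_\theta$ with the decay \eqref{m3} (and $z^m\hat E(z)$ still integrable after the exponential factor forces convergence). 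For the $\nu=0$, $m\ge1$ case of \eqref{reg-u-2} I would, if the direct route is awkward, instead interpolate \eqref{reg-u-2} for $\nu=1$ with \eqref{reg-u-1} for $\nu=0$ in the Hilbert scale $\{\dot H^r\}$, which produces the stated exponent without re-deriving a contour estimate.
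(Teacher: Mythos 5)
Your treatment of \eqref{reg-u-1} and of \eqref{reg-u-2} with $\nu=1$ matches the paper: differentiate the contour representation, insert \eqref{11} (resp.\ \eqref{00} with $p=2$, or equivalently apply \eqref{reg-u-1} with $\nu=0$ to $Av$), take $\delta=1/t$, and absorb the mismatch between the two regimes of $|g(z)|$ into a $T$-dependent constant via $t^{-1}\le T^{\aone}t^{-\aone-1}$. That part is sound.

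There is, however, a genuine gap in the case $\nu=0$, $m\ge 1$ of \eqref{reg-u-2}, where the claim is $\|E^{(m)}(t)v\|\le Ct^{-m+(\aone-\atwo+1)}\|Av\|$ with a \emph{positive} gain $t^{\aone-\atwo+1}$. None of the three routes you offer produces this gain. The direct bound $\|\hat E(z)Av\|\le M_{\bar\theta}|z|^{-1}\|Av\|$ from \eqref{res-3} yields only $t^{-m}\|Av\|$. Estimate \eqref{00} controls $A\hat E(z)$, not $\hat E(z)$, so no choice of intermediate $p$ gives $\|\hat E(z)\chi\|\lesssim |z|^{-1}|g(z)|^{-1}\|A\chi\|$; indeed this inequality is false, since $\hat E(z)\chi=z^{-1}\bigl(A^{-1}-(g(z)I+A)^{-1}\bigr)A\chi$ and the $A^{-1}$ term carries no factor $|g(z)|^{-1}$. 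Finally, interpolating \eqref{reg-u-2} for $\nu=1$ (norm $t^{-m}$ from $\dot H^2$ to $\dot H^2$) with \eqref{reg-u-1} for $\nu=0$ (norm $t^{-m}$ from $L^2$ to $L^2$) can only reproduce the exponent $t^{-m}$ on intermediate spaces; interpolation cannot manufacture a smaller exponent than either endpoint. The missing idea is the algebraic identity \eqref{res-E} in the form $\hat E(z)=z^{-1}I-g(z)^{-1}A\hat E(z)$: the contribution of $z^{-1}I$ vanishes because $\int_{\Gamma}e^{zt}z^{m-1}\,dz=0$ for $m\ge 1$, and the remainder is estimated using the upper bound on $|g(z)|^{-1}$ (equivalently the lower bound on $|g(z)|$ that you correctly note is available), which supplies exactly the factor that integrates to $t^{\aone-\atwo+1}$. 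Without this cancellation your argument proves only the weaker bound $t^{-m}\|Av\|$.
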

\begin{proof}
We  differentiate both sides of \eqref{mm} with respect to $t$ and then apply the operator $A^\nu$ so that
\begin{equation*}%\label{mm-1}
A^\nu E^{(m)}(t)v=\frac{1}{2\pi i}\int_{\Gamma_{\theta,\delta}} z^me^{zt}A^\nu\hat{E}(z)v\,dz.
%u(t)=\int_\Gamma e^{zt}\hat{u}(z)\,dz,
\end{equation*}
Then, by \eqref{11}, 
\begin{equation}\label{mm-2}
\|A^\nu E^{(m)}(t)\|\leq C_\theta \int_{\Gamma_{\theta,\delta}} |z|^m e^{Re(z)t} 
|z|^{-1}|g(z)|^{\nu}\,|dz|.
%u(t)=\int_\Gamma e^{zt}\hat{u}(z)\,dz,
\end{equation}
To evaluate the integral in \eqref{mm-2}, we choose $\delta=1/t$ and set $z=re^{i\varphi}$.
Then, for $\nu=0$, we deduce that
%\begin{eqnarray*}
$$
\|A^\nu E^{(m)}(t)\|\leq C_\theta \int_\Gamma |z|^{m-1} e^{Re(z)t} |dz|\leq C_\theta t^{-m},
$$
where here and throughout the paper  $\Gamma:=\Gamma_{\theta,1/t}$.
For the case $\nu=1$, we obtain
\begin{eqnarray*}\|A E^{(m)}(t)\|&\leq & C_\theta \displaystyle\int_{\Gamma} |z|^{m-1} e^{Re(z)t} \left(  |z|+a|z|^{\aone +1}\right) 
\min\left\{1,\frac{1}{b}|z|^{-\atwo}\right\}\,|dz|\\
      %&\leq & C_\theta t^{-m}\min\left\{t^{-1}+a t^{-\aone -1},\frac{1}{b}(t^{-1+\atwo}+at^{-1-\aone+\atwo})\right\}\\
			&\leq & C_{\theta} t^{-m}(t^{-1}+a t^{-\aone -1})\min\left\{1,\frac{t^{\atwo}}{b}\right\}\\
			&\leq & C_{\theta,T} t^{-m}(t^{-1}+a t^{-\aone -1})\frac{t^{\atwo}}{b}\\
			&\leq & C t^{-m-\aone-1+\atwo},
\end{eqnarray*}
where the last inequality holds since $t^{-1} \leq T^\aone t^{-\aone -1}$.
%\end{eqnarray*}
To prove the estimate \eqref{reg-u-2}, we note that by \eqref{res-E},
$$
 E^{(m)}(t)v= \frac{1}{2\pi i}\int_\Gamma z^me^{zt}({z}^{-1}I-g(z)^{-1}A\hat{E}(z))v\,dz.
%u(t)=\int_\Gamma e^{zt}\hat{u}(z)\,dz,
$$
As $\int_\Gamma  e^{zt}z^{m-1}\,dz =0$ for $m\geq 1$, and 
$$\|g(z)^{-1}\hat{E}(z))\|\leq M_\theta \mu \min\left\{|z|^{-2}+b|z|^{\atwo -2},\frac{1}{a}(|z|^{-2-\aone}+b|z|^{-2-\aone+\atwo})\right\},$$  we conclude that
\begin{eqnarray*}
\|E^{(m)}(t)v\|&\leq & C \displaystyle\int_\Gamma |z|^me^{Re(z)t} \mu \min\left\{|z|^{-2}+b|z|^{\atwo -2},\frac{1}{a}(|z|^{-2-\aone}+b|z|^{-2-\aone+\atwo})\right\} \|A v\|\,|dz|\\
        &\leq & C t^{-m}\min\left\{t+b t^{1-\atwo },\frac{1}{a}(t^{\aone+1}+bt^{\aone-\atwo+1})\right\}\|A v\|\\
				&\leq & C t^{-m}(t+b t^{1-\atwo })\min\left\{1,\frac{t^\aone}{a}\right\}\|A v\|\\
				&\leq & C t^{-m}(t+b t^{1-\atwo })\frac{t^\aone}{a}\|A v\| \\
				&\leq & C t^{-m+1-\atwo+\aone}\|A v\|,
\end{eqnarray*}				
which shows \eqref{reg-u-2} for $\nu =0$. Finally,  \eqref{reg-u-2} with $\nu =1$ can be obtained from \eqref{reg-u-1} by setting $\nu=0$ and replacing $v$ by $Av$. 
\end{proof}

%We now prove smoothing results for the homogeneous problem with initial data $v=0$. 

%In the next section, we derive {\it a priori} error estimates for the semidiscrete finite element scheme.

%%%%%%%%%%%%%%%%%%%%%%%%%%%%%%%%%%%%%%%%%%%%%%%%%%%%%%%%%%%%%%%%%%%%%%
%%%%%%%%%%%%%%%%%%%%%%%%%%%%%%%%%%%%%%%%%%%%%%%%%%%%%%%%%%%%%%%%%%%%%%
%%%%%%%%%%%%%%%%%%%%%%%%%%%%%%%%%%%%%%%%%%%%%%%%%%%%%%%%%%%%%%%%%%%%%%

\section {The spatially semidiscrete problem}\label{sec:FE}
\se
In this section, we describe the Galerkin FE procedure in space and derive optimal error estimates with respect to the smoothness of the solution expressed through the initial data $v$ and the right-hand side $f$.
%method used to discretize the space of the problem \eqref{main}. Then, we estimate the error for the semidiscrete  %homogeneous and inhomogeneous problem. 
Let $\cT_h$ be a shape regular  and quasi-uniform triangulation of
the domain $\bar\Omega$ into triangles  $K,$
and let $h=\max_{K\in \cT_h}h_{K},$ where $h_{K}$ denotes the diameter  of $K.$  
The approximate solution $u_h$ of the Galerkin FEM will be sought in the finite element space $V_h$ of continuous piecewise linear functions over the triangulation $\cT_h$:
$$V_h=\{v_h\in C^0(\overline {\Omega})\;:\;v_h|_{K}\;\mbox{is linear for all}~ K\in \cT_h\; \mbox{and} \; v_h|_{\partial \Omega}=0\}.$$
The semidiscrete Galerkin FEM for problem (\ref{main}) is to seek $u_h:(0,T]\to V_h$ such that
\begin{equation} \label{semi-FE}
%(u_h'+\Bone u_h,\chi)+  a(\Ba u_h,\chi)=  0\quad
\left( (1+a \partial_t^\aone)u_{ht} ,\chi\right)  + \bold{a}\left( \mu\left(1 +b \partial_t^\atwo\right) u_h,\chi\right) =  (f,\chi)\quad
\forall \chi\in V_h,\quad t\in (0,T], \quad u_h(0)=v_h,
\end{equation}
with $v_h$  is an appropriate approximation to the initial data $v$ in $V_h$, and $\bold{a}(v,w):= (\nabla v, \nabla w)$ 
is the bilinear form associated with the operator $A$. 
On the space $V_h$, we define the $L^2$-projection $P_h:L^2(\Omega)\rightarrow V_h$ and the Ritz projection
$R_h:H_0^1(\Omega)\rightarrow V_h$,  respectively, by
\begin{equation*}% \label{P_h*}
( P_h\varphi,\chi)=( \varphi,\chi)  \quad \forall \chi\in V_h,
\end{equation*}
\begin{equation*}% \label{R_h*}
 \bold{a}( R_h\varphi, \chi)=\bold{a}( \varphi,\chi)  \quad \forall \chi\in V_h.
\end{equation*}
The operators $P_h$ and $R_h$ satisfy the following approximation properties, see 
\cite{Ciarlet-2002} and \cite{thomee1997}.
\begin{lemma}\label{PR} 
The operators $P_h$ and $R_h$ satisfy
\begin{equation} \label{P_h}
\|P_h\psi -\psi\|+h\|\nabla(P_h\psi -\psi)\| \leq ch^q 
\|\psi\|_{\dot H^q(\Omega)}\quad  \forall \psi\in {\dot H^q(\Omega)},\; q=1,2,
\end{equation}
\begin{equation} \label{R_h}
\|R_h\psi -\psi\|+h\|\nabla(R_h\psi -\psi)\| \leq ch^q 
\|\psi\|_{\dot H^q(\Omega)}\quad  \forall \psi\in {\dot H^q(\Omega)},\; q=1,2.
\end{equation}
In particular, \eqref{P_h} indicates that $P_h$ is stable in $\dot H^1(\Omega)$.
\end{lemma}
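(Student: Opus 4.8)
The statement to prove is Lemma~\ref{PR}, the standard approximation properties of the $L^2$-projection $P_h$ and the Ritz projection $R_h$. These are classical results from finite element theory, so the plan is to invoke the well-known approximation theory for piecewise linear elements on shape-regular, quasi-uniform meshes, essentially reducing everything to interpolation estimates plus the $H^2$-regularity of the Poisson problem on the convex polygonal domain $\Omega$. Concretely, I would proceed as follows.

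\textbf{Ritz projection.} For the $q=2$ estimate in \eqref{R_h}, the C\'ea-type quasi-optimality of $R_h$ in the $H^1$-seminorm gives $\|\nabla(R_h\psi-\psi)\| \le \|\nabla(I_h\psi-\psi)\|$, where $I_h$ is the Lagrange interpolant; the standard interpolation error bound on shape-regular meshes yields $\|\nabla(I_h\psi-\psi)\| \le ch\|\psi\|_{H^2(\Omega)}$, and since $\psi\in\dot H^2(\Omega) = H^2(\Omega)\cap H^1_0(\Omega)$ with $\|\psi\|_{\dot H^2(\Omega)} = \|A\psi\|$ equivalent to $\|\psi\|_{H^2(\Omega)}$, this gives the gradient part. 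For the $L^2$ part one runs the Aubin--Nitsche duality argument: let $w$ solve $Aw = R_h\psi-\psi$ with homogeneous Dirichlet data; then $\|R_h\psi-\psi\|^2 = \bold{a}(R_h\psi-\psi, w) = \bold{a}(R_h\psi-\psi, w-I_hw) \le \|\nabla(R_h\psi-\psi)\|\,\|\nabla(w-I_hw)\| \le ch\|\nabla(R_h\psi-\psi)\|\,\|w\|_{H^2(\Omega)}$, and elliptic regularity on the convex polygon gives $\|w\|_{H^2(\Omega)} \le C\|R_h\psi-\psi\|$, so dividing yields $\|R_h\psi-\psi\| \le ch\|\nabla(R_h\psi-\psi)\| \le ch^2\|\psi\|_{\dot H^2(\Omega)}$. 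The $q=1$ estimate follows either directly from $H^1$-stability of $R_h$ combined with a density/interpolation-space argument, or by interpolating between $q=0$ (trivial $H^1$ bound) and $q=2$; the key point is $\|\nabla R_h\psi\|\le\|\nabla\psi\|$.

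\textbf{$L^2$-projection.} For $P_h$, the $L^2$ part of \eqref{P_h} with $q=2$ is immediate from optimality of $P_h$ in $L^2$: $\|P_h\psi-\psi\| \le \|I_h\psi-\psi\| \le ch^2\|\psi\|_{H^2(\Omega)}$. For the gradient part, the standard route uses the inverse inequality on the quasi-uniform mesh: $\|\nabla(P_h\psi-\psi)\| \le \|\nabla(P_h\psi-R_h\psi)\| + \|\nabla(R_h\psi-\psi)\| \le ch^{-1}\|P_h\psi-R_h\psi\| + ch\|\psi\|_{\dot H^2(\Omega)}$, and $\|P_h\psi-R_h\psi\| \le \|P_h\psi-\psi\| + \|\psi-R_h\psi\| \le ch^2\|\psi\|_{\dot H^2(\Omega)}$, giving $h\|\nabla(P_h\psi-\psi)\| \le ch^2\|\psi\|_{\dot H^2(\Omega)}$. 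The $q=1$ case is handled analogously, using the $H^1$-stability of $P_h$ (which itself follows from the inverse inequality together with the $L^2$ estimate, and is exactly the final assertion of the lemma), combined with interpolation between the $q=0$ and $q=2$ bounds; quasi-uniformity is what makes $P_h$ stable in $\dot H^1(\Omega)$.

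The main obstacle here is essentially bookkeeping rather than mathematics: one must be careful that $\dot H^2(\Omega)$ genuinely coincides (with equivalent norms) with $H^2(\Omega)\cap H^1_0(\Omega)$ on the convex polygonal domain so that elliptic regularity and interpolation estimates stated in terms of Sobolev norms transfer to the $\|\cdot\|_{\dot H^q(\Omega)}$ scale, and that the interpolation-space identification of $\{\dot H^r(\Omega)\}$ is used consistently for the fractional/$q=1$ cases. Since all of this is classical and the paper explicitly cites \cite{Ciarlet-2002} and \cite{thomee1997}, the ``proof'' is really a pointer to those references; I would simply state that the estimates \eqref{P_h} and \eqref{R_h} are standard consequences of approximation theory for piecewise linear finite elements on quasi-uniform meshes together with $H^2$-regularity of $-\Delta$ on the convex polygon $\Omega$, and that the $\dot H^1$-stability of $P_h$ follows from the inverse inequality.

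\medskip
\noindent\emph{Proof.} These are standard results; see \cite{Ciarlet-2002,thomee1997}. The $L^2$ estimates follow from the optimality of $P_h$ (resp.\ quasi-optimality of $R_h$ in $\bold{a}(\cdot,\cdot)$) combined with Lagrange interpolation error bounds and, for the $L^2$-norm of $R_h\psi-\psi$, an Aubin--Nitsche duality argument using the $H^2$-regularity of $-\Delta$ on the convex polygonal domain $\Omega$. The gradient estimates then follow using the inverse inequality valid on the quasi-uniform triangulation $\cT_h$; in particular the $\dot H^1$-stability of $P_h$ is a direct consequence of this inverse inequality and the $L^2$ error bound. $\square$
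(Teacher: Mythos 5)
Your proposal is correct and follows exactly the route the paper intends: the paper gives no proof of Lemma \ref{PR} at all, merely citing \cite{Ciarlet-2002} and \cite{thomee1997}, and your sketch (C\'ea quasi-optimality plus Lagrange interpolation bounds, Aubin--Nitsche duality with $H^2$-regularity on the convex polygon for the $L^2$-norm of $R_h\psi-\psi$, and the inverse inequality on the quasi-uniform mesh for the gradient bound and $\dot H^1$-stability of $P_h$) is precisely the standard argument found in those references. No gap; nothing further is needed.
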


We next  introduce the discrete  operator $A_h:V_h\rightarrow V_h$ defined by
\begin{equation*}% \label{dL}
(A_h\psi,\chi)=(\nabla \psi,\nabla \chi)  \quad \forall \psi,\chi\in V_h,
\end{equation*}
so that the semidiscrete scheme (\ref{semi-FE}) can be rewritten as
\begin{equation} \label{FE-semi-P}
(1+a \partial_t^\aone)u_{ht} +\mu\left(1 +b \partial_t^\atwo\right) A_h u_h(t)= P_h f(t),\quad\quad t\in (0,T],
\end{equation}
with $u_h(0)=v_h$ and $(I^{1-\alpha}u_{ht})(0)=0$.

%##################################
%##################################
\subsection {The homogeneous problem}
Following the  analysis in Section  \ref{sec:notation}, the solution of the homogeneous semidiscrete  problem (\ref{FE-semi-P}) can be represented by
\begin{equation*} \label{u_h}
u_h(t)=\frac{1}{2\pi i}\int_\Gamma e^{zt}\hat E_h(z)v_h\,dz=:E_h(t)v_h,
\end{equation*}
where $\hat  E_h(z)=z^{-1}g(z)(g(z) I+A_h)^{-1}$. Since $A_h$ is selfadjoint and positive definite on $V_h$,  
the estimate in  Lemma \ref{lem:Ah} are also valid for $\hat E_h$.
%the analogue of Lemma \ref{lem:Ah} holds.
\begin{lemma}\label{lem:Ah2} 
With $\chi\in V_h$, the following estimates hold:
\begin{equation*}%\label{0}
 \|A_h\hat{E}_h(z)\chi\|\leq C_{\theta} |z|^{-1}|g(z)|^{(2-p)/2} \|{A}_h^{p/2}\chi\| \quad \forall z\in\Sigma_\theta,\quad 0\leq p\leq 2, 
 \end{equation*} 
 \begin{equation*}%\label{1}
\|A_h^\nu \hat{E}_h(z)\chi\|\leq C_{\theta} |z|^{-1}|g(z)|^{\nu}  \|\chi\| \quad \forall z\in\Sigma_\theta,
\quad 0\leq \nu\leq 1. 
 \end{equation*}  
 where $C_\theta$ is independent of the mesh size $h$.
\end{lemma}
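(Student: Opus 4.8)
The plan is to mirror the proof of Lemma~\ref{lem:Ah} line by line, replacing the continuous operator $A$ by the discrete operator $A_h$, $\hat E(z)$ by $\hat E_h(z)$, and the resolvent bound \eqref{res0} by its discrete counterpart. The only input specific to the continuous setting that was used in Lemma~\ref{lem:Ah} is that $A$ is selfadjoint and positive definite (so that \eqref{res0} holds) together with the algebraic identity relating $g(z)\hat E(z)$, $A\hat E(z)$ and $\tfrac{g(z)}{z}I$; both of these transfer verbatim to $V_h$ because $A_h$ is selfadjoint and positive definite on $V_h$ with eigenvalues bounded below away from zero.

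First I would record the discrete resolvent estimate: since $A_h$ is selfadjoint positive definite on $V_h$ with respect to $(\cdot,\cdot)$, for every $z\in\Sigma_\theta$ one has $\|(zI+A_h)^{-1}\|\le M_\theta|z|^{-1}$, and since by Lemma~\ref{lem:g(z)} $g(z)\in\Sigma_{\bar\theta}$ whenever $z\in\Sigma_\theta$, it follows that $\|(g(z)I+A_h)^{-1}\|\le M_{\bar\theta}|g(z)|^{-1}$. Combined with the definition $\hat E_h(z)=z^{-1}g(z)(g(z)I+A_h)^{-1}$ this gives the analogue of \eqref{m3}, namely $\|\hat E_h(z)\|\le M_{\bar\theta}|z|^{-1}$. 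Next, since $A_h$ commutes with $\hat E_h(z)$ on $V_h$, for $\chi\in V_h$ we get $\|\hat E_h(z)A_h\chi\|\le M_{\bar\theta}|z|^{-1}\|A_h\chi\|$, the discrete version of \eqref{res-3}. Then the algebraic identity $A_h\hat E_h(z)=\tfrac{g(z)}{z}I-g(z)\hat E_h(z)$ (obtained exactly as \eqref{res-E}, by writing $g(z)\hat E_h(z)=\tfrac{g(z)}{z}I-A_h\hat E_h(z)$) yields $\|A_h\hat E_h(z)\chi\|\le(1+M_{\bar\theta})|z|^{-1}|g(z)|\,\|\chi\|$ for all $\chi\in V_h$, the analogue of \eqref{res-5}.

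Finally I would obtain the two stated estimates by interpolation in the Hilbert scale generated by $A_h$ on $V_h$: the first estimate follows by interpolating the bound $\|A_h\hat E_h(z)\chi\|\le M_{\bar\theta}|z|^{-1}\|A_h\chi\|$ (case $p=2$) with $\|A_h\hat E_h(z)\chi\|\le(1+M_{\bar\theta})|z|^{-1}|g(z)|\,\|\chi\|$ (case $p=0$) for $p\in[0,2]$; the second follows by interpolating $\|\hat E_h(z)\chi\|\le M_{\bar\theta}|z|^{-1}\|\chi\|$ (case $\nu=0$) with $\|A_h\hat E_h(z)\chi\|\le(1+M_{\bar\theta})|z|^{-1}|g(z)|\,\|\chi\|$ (case $\nu=1$) for $\nu\in[0,1]$. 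One can pick $C_\theta$ to be the same constant as in Lemma~\ref{lem:Ah}, and it is manifestly independent of $h$ since $M_\theta$ and $M_{\bar\theta}$ depend only on $\theta$.

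I do not expect any genuine obstacle here; this is essentially a verbatim transcription of the proof of Lemma~\ref{lem:Ah}. The only point deserving a word of care is the justification that interpolation is legitimate at the discrete level: fractional powers $A_h^s$ are well defined on the finite-dimensional space $V_h$ by spectral decomposition, and since all the operators in sight are functions of $A_h$ (hence mutually commuting), the interpolation inequality $\|A_h^{s}\,T\| \le \|T_0\|^{1-s}\|T_1\|^{s}$ with $s\in[0,1]$ applies with no subtlety; alternatively one notes that the estimates can be checked directly on each eigenvector of $A_h$. Since $C_\theta$ in these bounds is built only from $M_\theta=1/\sin(\pi-\theta)$ and $M_{\bar\theta}=1/\sin(\pi-\bar\theta)$, the claimed $h$-independence is automatic.
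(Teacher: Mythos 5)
Your proposal is correct and matches the paper's intent exactly: the paper gives no separate proof of this lemma, simply noting that since $A_h$ is selfadjoint and positive definite on $V_h$, the estimates of Lemma~\ref{lem:Ah} carry over to $\hat E_h$, which is precisely the verbatim transcription you carry out. Your added care about the legitimacy of interpolation in the discrete Hilbert scale and the $h$-independence of the constants is a faithful elaboration of the argument the paper leaves implicit.
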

%Moreover, an analogous of Lemma \ref{lem:Ah2} holds for $\hat F_h(z)=z^{\atwo-1}(g(z) I+A_h)^{-1}$, when 
%we replace ${\hat{E}}_h(z)$ in Lemma \ref{lem:Ah2} by $\hat F_h(z).$

By  choosing $v_h=P_hv$, the error of the FE approximation $e_h(t):=u_h(t)-u(t)$  at time $t$ can be represented by 
\begin{equation} \label{error-FE}
e_h(t)=\frac{1}{2\pi i}\int_\Gamma e^{zt}(\hat E_h(z)P_h-\hat E(z))v\,dz= \frac{1}{2\pi i}\int_\Gamma e^{zt}
\frac{g(z)}{z} S_h(z)v\,dz,
\end{equation}
where $S_h(z)=(g(z)I+A_h)^{-1}P_h-(g(z)I+A)^{-1}$. 
In the next lemma, we state important properties of the operator $S_h(z)$ which will play  a key role in the error analysis.
%Some properties of the operator $S_h(z)$ which are stated in the following lemma and will play  a key role in the error %analysis.

\begin{lemma}\label{G} The following estimate holds for all $z\in\Sigma_\theta$,
\begin{equation} \label{000}
\|S_h(z)v\|+h\|\nabla S_h(z)v\| \leq ch^2 \|v\|,
\end{equation}
\begin{equation} \label{111}
\|S_h(z)v\|_{L^\infty(\Omega)} \leq ch^2 l_h^2 \|v\|_{L^\infty(\Omega)},
\end{equation}
with $l_h=|\ln h|$.
%with $c$ independent of $h$ and  $l_h=|\ln h|$.
\end{lemma}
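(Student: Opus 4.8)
The plan is to analyze the "elliptic" error operator $S_h(z) = (g(z)I+A_h)^{-1}P_h - (g(z)I+A)^{-1}$ at fixed $z$ by comparing it with the resolvent error for the Laplacian, which is governed by standard FE theory since $g(z) \in \Sigma_{\bar\theta}$ with $\bar\theta < \pi$ stays away from the negative real axis. First I would fix $z \in \Sigma_\theta$, write $w = (g(z)I+A)^{-1}v$ and $w_h = (g(z)I+A_h)^{-1}P_h v$, so that $S_h(z)v = w_h - w$, and observe that $w$ solves the (complex, shifted) elliptic problem $g(z)w + Aw = v$ while $w_h \in V_h$ solves $g(z)(w_h,\chi) + \mathbf{a}(w_h,\chi) = (v,\chi)$ for all $\chi \in V_h$; hence $w_h - R_h w$ satisfies a Galerkin-orthogonality relation with data $g(z)(w - R_h w)$. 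Writing $w_h - w = (w_h - R_h w) + (R_h w - w)$, the second term is controlled by \eqref{R_h}, and the first term is estimated by an energy/duality argument (Aubin--Nitsche) for the operator $g(z)I + A_h$, using the uniform sectoriality bound $|g(z)I + A|^{-1}$-type estimates analogous to \eqref{res1} together with the $\dot H^2$-regularity $\|Aw\| + |g(z)|\,\|w\| \le C\|v\|$ of the shifted elliptic problem. The key point is that all constants can be taken independent of $z$ (and of $|g(z)|$) because the bad factor $|g(z)|$ in the resolvent bound is exactly compensated by the smallness it forces on $w$; this is the mechanism that makes \eqref{000} hold with a bound $ch^2\|v\|$ uniform in $z$.

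For the $H^1$ part of \eqref{000} I would combine the $\nabla$-estimate $\|\nabla(w_h - R_h w)\| \le C|g(z)|\,\|w - R_h w\| \lesssim |g(z)| h^2 \|w\|_{\dot H^2}$ — which by the compensation $|g(z)|\,\|w\|_{\dot H^2} \lesssim \|v\|$ is at worst $O(h^2)\|v\|$, better than needed — with $\|\nabla(R_h w - w)\| \le ch\|w\|_{\dot H^2} \le c h \|v\|$ via the same compensation, giving $\|\nabla S_h(z)v\| \le ch\|v\|$, i.e. the stated $h^{-1}\cdot h^2$ scaling. For the $L^\infty$ estimate \eqref{111} I would invoke the known maximum-norm stability and error estimates for the Ritz projection and for the resolvent of $A_h$ on quasi-uniform meshes (Schatz--Wahlbin type results), which introduce the logarithmic factor $l_h^2 = (\ln h)^2$: one writes again $S_h(z)v = (w_h - R_h w) + (R_h w - w)$, bounds $\|R_h w - w\|_{L^\infty} \le ch^2 l_h \|w\|_{W^{2,\infty}}$ and estimates $\|w_h - R_h w\|_{L^\infty}$ by the discrete maximum principle / resolvent stability of $g(z)I + A_h$ in $L^\infty$ applied to the data $g(z)(w - R_h w)$, again using the compensation $|g(z)|\,\|w\|_{W^{2,\infty}} \lesssim \|v\|_{L^\infty}$ together with elliptic $W^{2,\infty}$-regularity on the convex polygon.

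The main obstacle I anticipate is making the $z$-uniformity of the constants fully rigorous: one must verify that the standard FE resolvent and Ritz-projection estimates hold for the \emph{complex} shift $g(z)$ with constants depending only on the sector angle $\bar\theta$ (not on $|g(z)|$), and that the elliptic regularity $\|Aw\| + |g(z)|\|w\| \le C\|v\|$ (and its $L^\infty$ analogue) on the convex polygonal domain is available with a constant independent of $g(z) \in \Sigma_{\bar\theta}$. Once this uniform shifted-elliptic theory is in place — it is essentially the content of the resolvent estimates already used in \eqref{res0}, \eqref{res1}, combined with Lemma \ref{PR} — the two estimates \eqref{000} and \eqref{111} follow by the decomposition and compensation argument sketched above. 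I would present the $L^2$ and $H^1$ bounds in detail and merely cite the maximum-norm literature for the logarithmic factor in \eqref{111}.
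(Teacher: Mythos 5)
The paper does not actually prove this lemma: it simply cites \cite{FS-2002,LST-1996,EJLZ2016} for \eqref{000} and \cite{McLeanThomee2010} for \eqref{111}. Your sketch reconstructs precisely the argument of those references --- Galerkin orthogonality for the shifted form $g(z)(\cdot,\cdot)+\mathbf{a}(\cdot,\cdot)$, the splitting through the Ritz projection, duality for the $L^2$ bound, and the compensation $|g(z)|\,\|w\|+\|Aw\|\le C\|v\|$ with constants depending only on the sector angle --- so in substance you are following the same route the paper relies on, and the $L^\infty$ part via Schatz--Wahlbin-type maximum-norm resolvent estimates is exactly what \cite{McLeanThomee2010} provides.

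One intermediate inequality in your $H^1$ argument is stated incorrectly, though the conclusion survives. Writing $\theta=w_h-R_hw$, the error equation $g(z)(\theta,\chi)+\mathbf{a}(\theta,\chi)=-g(z)(R_hw-w,\chi)$ and the sector condition give $|g(z)|\,\|\theta\|^2+\|\nabla\theta\|^2\le C|g(z)|\,\|R_hw-w\|\,\|\theta\|$, hence $\|\nabla\theta\|\le C|g(z)|^{1/2}\|R_hw-w\|$, not the first power of $|g(z)|$ you wrote. With the exponent $1/2$ your subsequent chain $|g(z)|\,\|w\|_{\dot H^2}\lesssim\|v\|$ no longer closes directly; you should instead either combine $\|R_hw-w\|\le ch\|\nabla w\|$ with $\|\nabla w\|\le C|g(z)|^{-1/2}\|v\|$ (which follows from testing the continuous equation with $w$), or more simply deduce $\|\theta\|\le C\|R_hw-w\|\le Ch^2\|v\|$ first and then apply an inverse inequality on the quasi-uniform mesh to get $\|\nabla\theta\|\le Ch^{-1}\|\theta\|\le Ch\|v\|$. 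Either repair yields \eqref{000}; the rest of your outline is sound.
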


The first estimate is given in  \cite{FS-2002,LST-1996,EJLZ2016}. The second one can be found in 
\cite{McLeanThomee2010}. Now we are in position to prove  a nonsmooth data error estimate for the semidiscrete problem.
%Based on Lemma \eqref{G}, we now show  a nonsmooth data error estimate for the semidiscrete problem.

\begin{theorem}\label{nonsmooth-FE}
Let $u$ and $u_h$ be the solutions of problems \eqref{main} and \eqref{FE-semi-P} with $v\in L^2(\Omega)$ and 
$v_h=P_hv$, respectively, and $f= 0$. Then, for $t>0$, 
\begin{equation} \label{01}
\|e_h(t)\|+h\|\nabla e_h(t)\| \leq C h^2 t^{\atwo-\aone-1}\|v\|.
\end{equation}
\end{theorem}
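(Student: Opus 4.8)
The plan is to estimate the contour integral representation of the error in \eqref{error-FE} directly, using the resolvent-type bound \eqref{000} on $S_h(z)$ together with the growth estimate \eqref{g1} on $g(z)$ from Lemma \ref{lem:g(z)}. Starting from
\[
e_h(t)=\frac{1}{2\pi i}\int_\Gamma e^{zt}\frac{g(z)}{z}S_h(z)v\,dz,
\]
I would take norms under the integral sign and apply \eqref{000} to obtain $\|e_h(t)\|\le c h^2\int_\Gamma e^{\mathrm{Re}(z)t}\frac{|g(z)|}{|z|}\,|dz|$, and similarly $\|\nabla e_h(t)\|\le c h\int_\Gamma e^{\mathrm{Re}(z)t}\frac{|g(z)|}{|z|}\,|dz|$, so that the two bounds in \eqref{01} reduce to a single scalar integral estimate. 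The remaining task is to show
\[
\int_\Gamma e^{\mathrm{Re}(z)t}\,\frac{|g(z)|}{|z|}\,|dz|\le C\,t^{\atwo-\aone-1},
\]
where $\Gamma=\Gamma_{\theta,1/t}$ is the usual Hankel-type contour with $\delta=1/t$.

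For that scalar integral I would use the bound \eqref{g1}, namely $|g(z)|\le \frac{M_\theta}{\mu}(|z|+a|z|^{\aone+1})\min\{1,\tfrac{1}{b}|z|^{-\atwo}\}$, which on $\Gamma$ (where $|z|\ge 1/t$ with $t\le T$, so $|z|$ can be large) one handles exactly as in the proof of Theorem \ref{th:Reg-u} for the case $\nu=1$: bound $|z|+a|z|^{\aone+1}$ by $C(a,T)|z|^{\aone+1}$ using $|z|\le T^\aone|z|^{\aone+1}$ on the relevant range, and bound $\min\{1,\tfrac1b|z|^{-\atwo}\}$ by $\tfrac1b|z|^{-\atwo}$. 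This gives $|g(z)|/|z|\le C|z|^{\aone-\atwo}$. Splitting $\Gamma$ into the two rays $\rho e^{\pm i\theta}$, $\rho\ge 1/t$, and the circular arc $|z|=1/t$, on the rays one gets $\int_{1/t}^\infty e^{\rho t\cos\theta}\rho^{\aone-\atwo}\,d\rho$, which after the substitution $s=\rho t$ equals $t^{-(\aone-\atwo)-1}\int_1^\infty e^{s\cos\theta}s^{\aone-\atwo}\,ds\le C t^{\atwo-\aone-1}$ (the integral converges since $\cos\theta<0$); on the arc, $|dz|=\tfrac1t\,d\psi$, $e^{\mathrm{Re}(z)t}\le e$, and $|z|^{\aone-\atwo}=t^{\atwo-\aone}$, giving a contribution $\le C t^{\atwo-\aone-1}$ as well. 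Summing yields the claimed bound.

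The main technical point — though not really an obstacle, since it mirrors the computation already carried out in Theorem \ref{th:Reg-u} — is keeping track of the two competing factors in \eqref{g1}: near the circular arc $|z|\sim 1/t$ one may want the bound $|g(z)|\le C|z|^{1-\atwo}$ while on the distant parts of the rays one wants $|g(z)|\le C a|z|^{\aone+1-\atwo}$, and one has to verify that in all regimes the single power $|z|^{\aone-\atwo}$ (up to the constant $C=C(a,b,\mu,\theta,T)$) dominates $|g(z)|/|z|$; this is exactly where the restriction $t\le T$ and the inequality $|z|\le T^\aone|z|^{\aone+1}$ (valid since $|z|\ge 1/t\ge 1/T$) are used, just as in the proof of \eqref{reg-u-1}. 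Finally, the $H^1$ part of \eqref{01} carries the extra factor $h$ rather than $h^2$ because \eqref{000} supplies $\|\nabla S_h(z)v\|\le ch\|v\|$, so exactly the same scalar integral bound produces $h\,t^{\atwo-\aone-1}$, completing the proof.
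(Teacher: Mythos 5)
Your proposal is correct and follows essentially the same route as the paper: the paper's proof likewise starts from the representation \eqref{error-FE}, applies \eqref{000} to reduce both norms to the scalar integral $\int_\Gamma e^{\mathrm{Re}(z)t}\,|g(z)|/|z|\,|dz|$, bounds it by $C(t^{\beta-1}+a\,t^{\beta-\alpha-1})$ via \eqref{g1}, and absorbs the first term using $t\le T$. Your write-up merely makes explicit the ray/arc splitting and the inequality $|z|\le T^{\alpha}|z|^{\alpha+1}$ that the paper leaves implicit.
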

\begin{proof}
The $L^2$-estimate of the error  follows from the representation \eqref{error-FE} and the estimate \eqref{000}. Indeed, we have
\begin{eqnarray*}
\|e_h(t)\|&\leq & Ch^2\|v\|\displaystyle\int_\Gamma e^{Re(z)t}\frac{|g(z)|}{|z|}|dz|\\
&\leq & C\left( t^{-1+\atwo}+at^{-1-\aone+\atwo}\right)h^2\|v\|\\
    &\leq &C t^{\atwo-\aone-1}h^2\|v\|.
\end{eqnarray*}
The $H^1(\Omega)$-error estimate is established analogously.
%The $H^1(\Omega)$-error estimate follows analogously.
\end{proof}

In the next theorem, an error bound is obtained for smooth initial data $v\in \dot H^2(\Omega)$.
\begin{theorem}\label{smooth-FE}
Let $u$ and $u_h$ be the solutions of problems \eqref{main} and \eqref{FE-semi-P} with $v\in \dot H^2(\Omega)$ and 
$v_h=R_hv$, respectively, and $f= 0$. Then, for $t>0$, 
\begin{equation} \label{02}
\| e_h(t)\|+h\|\nabla e_h(t)\| \leq ch^2 \|v\|_{\dot H^2(\Omega)}.
\end{equation}
\end{theorem}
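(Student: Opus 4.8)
The plan is to split the error into the familiar two pieces via the Ritz projection, $e_h(t) = (u_h(t) - R_h u(t)) + (R_h u(t) - u(t)) =: \vartheta(t) + \varrho(t)$. The second term $\varrho(t)$ is handled immediately by the Ritz approximation property \eqref{R_h} with $q=2$, giving $\|\varrho(t)\| + h\|\nabla\varrho(t)\| \le ch^2 \|u(t)\|_{\dot H^2(\Omega)}$; since $u(t) = E(t)v$ and, by \eqref{reg-u-2} with $m=0$, $\nu=1$ (equivalently \eqref{reg-u-1} applied to $Av$ in place of $v$), one has $\|A u(t)\| = \|A E(t) v\| \le C\|Av\| = C\|v\|_{\dot H^2(\Omega)}$ uniformly for $t\in(0,T]$, this contributes the desired bound. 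So the work is in estimating $\vartheta(t)$.

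For $\vartheta(t)$ I would proceed directly through the contour representation rather than an energy argument, to stay consistent with the Laplace-transform machinery already set up. Using $v_h = R_h v$ and the representations for $u_h$ and $u$, write $\vartheta(t) = \frac{1}{2\pi i}\int_\Gamma e^{zt}\big(\hat E_h(z) R_h - R_h \hat E(z)\big) v \, dz$; inserting $\hat E(z) v = \hat E(z) v$ and using $A\hat E(z) v$ together with the definition of the Ritz projection ($\mathbf a(R_h w,\chi) = \mathbf a(w,\chi)$, i.e. $A_h R_h = P_h A$ on $\dot H^2(\Omega)$), one rewrites the integrand so that it factors through $S_h(z)$ applied to $A E(t) v$, i.e. through the quantity $g(z)^{-1} A\hat E(z) v$ or a similar combination, picking up a kernel like $\frac{g(z)}{z}\cdot g(z)^{-1} = z^{-1}$ against $A\hat E(z)v$. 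Concretely I expect an identity of the shape $\hat E_h(z) R_h v - R_h\hat E(z)v = \frac{g(z)}{z}\, S_h(z)\, \hat E(z) A v \cdot(\text{bounded factor})$, or more simply $\vartheta(t) = \frac{1}{2\pi i}\int_\Gamma e^{zt}\, z^{-1} S_h(z)\, g(z)\hat E(z) A v\, dz$ after using \eqref{res-E}; then \eqref{000} bounds $\|S_h(z)(\cdot)\| \le ch^2\|\cdot\|$ and \eqref{11} (the $\hat E$ estimate, which holds for $\hat E$ by Lemma~\ref{lem:Ah}) together with the bound $|g(z)|\le \frac{M_\theta}{\mu}(|z|+a|z|^{\aone+1})$ controls the operator applied to $Av$. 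The resulting scalar integral is $\int_\Gamma e^{\mathrm{Re}(z)t}|z|^{-1}\,|dz|$ times a $t$-independent constant (after the $|g(z)|$ factors are absorbed against $\hat E$, the net power of $|z|$ should be $-1$, or better, leaving a convergent integral uniformly in $t$), yielding $\|\vartheta(t)\|\le ch^2\|Av\|$. The $H^1$ bound follows the same way using the second part of \eqref{000}, $\|\nabla S_h(z)v\|\le ch\|v\|$.

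The main obstacle is algebraic bookkeeping: getting the commutator identity for $\hat E_h(z) R_h - R_h\hat E(z)$ into a clean form involving $S_h(z)$ with the right powers of $g(z)$ and $z$ out front, and then checking that those powers combine with the estimates of Lemma~\ref{lem:Ah} and Lemma~\ref{G} to leave a contour integral that converges \emph{uniformly in $t\in(0,T]$} (so that the bound is genuinely $O(h^2)$ with no negative power of $t$, unlike the nonsmooth case). One has to be careful that the worst term, the one carrying the factor $a|z|^{\aone+1}$ in $|g(z)|$, is tamed — this is exactly where $\|g(z)\hat E(z)\|\lesssim |z|^{-1}|g(z)|^2$ would be too weak and instead one must use the $A v$ regularity, i.e. estimate $\|g(z)\hat E(z)Av\|$ via the $\min\{\cdot,\cdot\}$ bound on $g(z)$ exactly as in the proof of \eqref{reg-u-2}, so that the large-$|z|$ behaviour on the rays of $\Gamma$ is integrable and the small-$|z|$ arc (radius $1/t$) contributes only an $O(1)$ constant. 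Once the exponents are matched this is routine; the rest is just invoking \eqref{R_h} and \eqref{reg-u-2} for $\varrho$.
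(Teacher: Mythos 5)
Your overall strategy --- represent the error on the contour $\Gamma$, use \eqref{res-E} together with $A_hR_h=P_hA$ to bring in $S_h(z)$, and then invoke Lemma \ref{G} --- is exactly the mechanism of the paper's proof, and your treatment of $\varrho=R_hu-u$ via \eqref{R_h} and $\|AE(t)v\|\le C\|Av\|$ is fine (the Ritz splitting itself is harmless, though the paper dispenses with it and bounds $e_h$ directly). The genuine gap is that the one identity on which everything rests is left as a conjecture, and the form you conjecture is not the right one. Writing $\hat E(z)v=z^{-1}v-g(z)^{-1}A\hat E(z)v$ and $\hat E_h(z)R_hv=z^{-1}R_hv-g(z)^{-1}A_h\hat E_h(z)R_hv$ (both from \eqref{res-E} and its discrete analogue), and using that $A_h$ commutes with $(g(z)I+A_h)^{-1}$ together with $A_hR_h=P_hA$, one finds
\[
\hat E_h(z)R_hv-\hat E(z)v \;=\; z^{-1}(R_hv-v)\;-\;z^{-1}S_h(z)Av,
\]
that is, $S_h(z)$ acts directly on $Av$ and \emph{all} factors of $g(z)$ cancel. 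Your proposed form $z^{-1}S_h(z)g(z)\hat E(z)Av$ (with $S_h$ acting on $\hat E(z)Av$ and a residual $g(z)$) is not what comes out, and the bookkeeping you describe --- absorbing $|g(z)|$ against $\hat E$, taming the $a|z|^{1+\aone}$ term via the $\min$ bound as in the proof of \eqref{reg-u-2} --- is both delicate and, as it turns out, unnecessary.

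With the correct identity the proof closes immediately: \eqref{000} gives $\|S_h(z)Av\|+h\|\nabla S_h(z)Av\|\le ch^2\|Av\|$, the term $z^{-1}(R_hv-v)$ integrates to $R_hv-v$ and is handled by \eqref{R_h}, and the remaining scalar integral $\int_\Gamma e^{\mathrm{Re}(z)t}|z|^{-1}\,|dz|$ over $\Gamma=\Gamma_{\theta,1/t}$ is bounded by a constant independent of $t$ --- precisely the uniform-in-$t$ control you were (rightly) worried about. So the architecture of your argument is sound, but as written it stops short of the decisive computation, and the computation it gestures at would lead you into estimates that the actual identity renders moot.
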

\begin{proof} By  \eqref{res-E} and the identity $A_hR_h = P_hA$, we obtain
\begin{equation*} \label{error-FE-n}
e_h(t)=\frac{1}{2\pi i}\left(-\int_\Gamma e^{zt} z^{-1} S_h(z)Av\,dz
+ \int_\Gamma e^{zt} z^{-1} (R_hv-v)\,dz\right).
\end{equation*}
Then, using \eqref{000} and \eqref{R_h},  we deduce that
\begin{eqnarray*}
\|e_h(t)\|&\leq& ch^2\|Av\|\int_\Gamma e^{Re(z)t}|z|^{-1}|dz|\\
%&\leq & Ch^2  \left(\int_{1/t}^\infty e^{\rho t\cos\theta}\rho^{-1}d\rho
%+\int_{-\theta}^{\theta}e^{\cos\psi}d\psi\right)\|A_h v_h\|  \\
&\leq& ch^2\|Av\|.
\end{eqnarray*}
The $H^1(\Omega)$-error estimate is derived by a similar argument.
\end{proof}

Finally, we derive an error estimate in the maximum norm  by considering \eqref{111} and following the argument in the proof of Theorem \ref{nonsmooth-FE}.
%, we obtain an  error estimate for $v\in L^\infty(\Omega)$.
%
\begin{theorem}\label{nonsmooth-FE-infty}
Let $u$ and $u_h$ be the solutions of problems \eqref{main} and \eqref{FE-semi-P} with $v\in L^\infty(\Omega)$ and 
$v_h=P_hv$, respectively, and $f= 0$. Then, for $t>0$, 
\begin{equation} \label{03}
\|e_h(t)\|_{L^\infty(\Omega)}\leq ch^2l_h^2 t^{\atwo-\aone-1} \|v\|_{L^\infty(\Omega)}.
\end{equation}
\end{theorem}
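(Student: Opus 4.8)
The plan is to mimic the proof of Theorem~\ref{nonsmooth-FE} verbatim, only replacing the $L^2$ ingredients by their $L^\infty$ counterparts. Starting from the error representation \eqref{error-FE},
\begin{equation*}
e_h(t)=\frac{1}{2\pi i}\int_\Gamma e^{zt}\frac{g(z)}{z}S_h(z)v\,dz,
\end{equation*}
I take the $L^\infty(\Omega)$-norm inside the integral and invoke the maximum-norm bound \eqref{111} on $S_h(z)$, namely $\|S_h(z)v\|_{L^\infty(\Omega)}\le ch^2 l_h^2\|v\|_{L^\infty(\Omega)}$, which holds uniformly for $z\in\Sigma_\theta$ and in particular on the contour $\Gamma=\Gamma_{\theta,1/t}$. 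This yields
\begin{equation*}
\|e_h(t)\|_{L^\infty(\Omega)}\le ch^2 l_h^2\|v\|_{L^\infty(\Omega)}\int_\Gamma e^{\mathrm{Re}(z)t}\frac{|g(z)|}{|z|}\,|dz|.
\end{equation*}

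The remaining task is to estimate the scalar contour integral $\int_\Gamma e^{\mathrm{Re}(z)t}|g(z)|\,|z|^{-1}\,|dz|$, which is exactly the integral already bounded in the proof of Theorem~\ref{nonsmooth-FE}. Using the bound \eqref{g1} from Lemma~\ref{lem:g(z)}, $|g(z)|\le M_\theta\mu^{-1}(|z|+a|z|^{\alpha+1})\min\{1,b^{-1}|z|^{-\beta}\}$, and splitting $\Gamma$ into the two rays $\rho e^{\pm i\theta}$, $\rho\ge 1/t$, and the arc $|z|=1/t$, a standard substitution $z=re^{i\varphi}$ with $\mathrm{Re}(z)\le -c r$ on the rays shows
\begin{equation*}
\int_\Gamma e^{\mathrm{Re}(z)t}\frac{|g(z)|}{|z|}\,|dz|\le C\bigl(t^{-1+\beta}+a\,t^{-1-\alpha+\beta}\bigr)\le C\,t^{\beta-\alpha-1},
\end{equation*}
where the last inequality absorbs $t^{-1+\beta}$ into $t^{-1-\alpha+\beta}$ using $t\le T$. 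Combining this with the displayed bound above gives \eqref{03}.

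There is essentially no obstacle here: the only point requiring a word of care is that the maximum-norm estimate \eqref{111} for $S_h(z)$ holds with a constant independent of $z\in\Sigma_\theta$ (so that it may be pulled out of the contour integral), which is precisely the content of the result quoted from \cite{McLeanThomee2010}; and that the scalar integral is identical to the one estimated in Theorem~\ref{nonsmooth-FE}, so no new computation is needed. The logarithmic factor $l_h^2=|\ln h|^2$ simply rides along as part of the constant in front. The $H^1$- or $L^2$-type arguments are not needed since the maximum-norm bound on $S_h(z)$ is self-contained.
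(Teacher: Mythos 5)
Your proof is correct and is exactly the argument the paper intends: the paper's ``proof'' of this theorem is the single sentence that the estimate follows by combining the maximum-norm bound \eqref{111} on $S_h(z)$ with the argument of Theorem \ref{nonsmooth-FE}, which is precisely what you carry out. The scalar contour integral bound $\int_\Gamma e^{\mathrm{Re}(z)t}|g(z)|\,|z|^{-1}\,|dz|\le C t^{\atwo-\aone-1}$ is the same computation already done there, so nothing further is needed.
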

%******************************
\begin{comment}
\begin{remark}
The error bound in Theorem $\ref{smooth-FE}$ is valid uniformly down to $t=0$. However, it shows a decay 
in time of order $O(t^{-\alpha_1})$ for large $t$. This is not observed in the case when $\gamma=0$.
\end{remark}

%******************************
\begin{remark} If we choose $v_h=P_hv$ in Theorem $\ref{smooth-FE}$ then we can split the error into 
$$e_h(t)=E_h(t)P_hv-E(t)v=(E_h(t)P_hv-E_h(t)R_hv)+(E_h(t)R_hv-E(t)v)),$$
where $E(t)$ and $E_h(t)$ are the continuous and semidiscrete solution operators.
The first term on the right-hand side can be bounded using the $L^2(\Omega)$-stability of  $E_h(t)$
and the estimate in Lemma $\ref{PR}$. The second term is already bounded in \eqref{02}.  
Thus,  interpolation with \eqref{01} for $q\in [0,2]$ yields
\begin{equation} \label{02-p}
\|e_h(t)\|+h\|\nabla e_h(t)\| \leq 
ch^2t^{(\atwo-\aone-1)(2-q)/2} \|v\|_{\dot H^q(\Omega)}.
\end{equation}
\end{remark}
\end{comment}

%******************************
\begin{remark} In Theorem $\ref{smooth-FE}$, the estimate \eqref{02} is still valid  if one chooses the approximation  $v_h=P_hv$, see  the arguments in \cite[Remark 3.1]{EJLZ2016}. Then, by interpolation 
 with \eqref{01}, we obtain for $q\in [0,2]$, 
\begin{equation} \label{02-p}
\|e_h(t)\|+h\|\nabla e_h(t)\| \leq 
ch^2t^{(\atwo-\aone-1)(2-q)/2} \|v\|_{\dot H^q(\Omega)}.
\end{equation}
\end{remark}

%%%%%%%%%%%%%%%%%%%%%%%%%%%%%%%%%%%%%%%%%%%%%%%%%%%%%%%%%%%%%%%%%%%%%%
%%%%%%%%%%%%%%%%%%%%%%%%%%%%%%%%%%%%%%%%%%%%%%%%%%%%%%%%%%%%%%%%%%%%%%
%%%%%%%%%%%%%%%%%%%%%%%%%%%%%%%%%%%%%%%%%%%%%%%%%%%%%%%%%%%%%%%%%%%%%%

\subsection {The inhomogeneous problem}\label{sec:inh}
%\se

We now turn back to the inhomogeneous problem \eqref{main}. With a vanishing initial data $v$, 
Laplace transforms yield $\hat u(z) =\mu^{-1} (1+bz^\atwo)^{-1}\left(g(z)I+A\right)^{-1} \hat f(z)$.
 Then, with $\hat f(z)$ being analytic in the sector $\Sigma_\theta$, we can follow the analysis presented for the previous subsection  to obtain, after noting that $|(1+bz^\atwo)^{-1}|\leq M_\theta b^{-1} |z|^{-\atwo}$, the following error estimates:
\begin{equation} \label{01-inh}
\| e_h(t)\|+h\|\nabla e_h(t)\| \leq ch^2 t^{\atwo -1}\|\hat f(z)\|_{L^2(\Omega),\Gamma}
\end{equation}
and 
\begin{equation} \label{02-inh}
\| e_h(t)\|_{L^\infty(\Omega)}\leq ch^2l_h^2t^{\atwo-1}\|\hat f(z)\|_{L^\infty(\Omega),\Gamma},
\end{equation}
where $\Gamma=\Gamma_{\theta,1/t}$ and
$$
\|\hat f(z)\|_{\mathcal{B},\Gamma}:=\sup_{z\in \Gamma} \|\hat f(z)\|_{\mathcal{B}}.
$$
Note that it is possible to remove the term $t^{\atwo -1}$ from the right-hand sides  of \eqref{01-inh} and \eqref{02-inh} if 
$\|z^{1-\atwo} \hat f(z)\|_{L^2(\Omega),\Gamma}$ and $\|z^{1-\atwo}\hat f(z)\|_{L^\infty(\Omega),\Gamma}$ are  finite. A serious restriction in  this approach is that it requires the Laplace transform $\hat f(z)$ to exist, to be analytic in the whole sector $\Sigma_\theta$ and to be such that the norms indicated above are finite.
The simple case with $f(t)=e^t$ shows that the Laplace transform $\hat f(z)=(z-1)^{-1}$ is not analytic in $\Sigma_\theta$ and so the method is not applicable.

We shall now follow a different approach and  prove results showing a classical-type nonsmooth data error estimate that does not use $\hat f(z)$.
%Now, we prove error estimates that does not use $\hat f(z)$. 
%
\begin{theorem}\label{nonsmooth-FE-n}
Let $u$ and $u_h$ be the solutions of problems \eqref{main} and \eqref{FE-semi-P}, respectively. 
Let $q\in[0,2]$ and $p>1/\atwo$. Then, for $t>0$, the following error estimates hold:

(a) If $v\in \dot H^q(\Omega)$ and  $f\in L^p(0,T;L^2(\Omega))$, then
\begin{equation} \label{01-00}
\begin{split}
\|e_h(t)\|+h\|\nabla e_h(t)\| \leq &  
ch^2\left(t^{(\atwo-\aone -1)(2-q)/2}\|v\|_{\dot H^q(\Omega)}+t^{\atwo-1/{ p}}\|f\|_{ L^p(0,T;L^2(\Omega))}\right).
%ch^2\frac{t^{1-\atwo  p}}{1-\atwo  p}\|f\|_{ L^p(0,T;L^2(\Omega))}.
\end{split}
\end{equation}

(b) If  $v\in L^\infty(\Omega)$ and $f\in L^p(0,T;L^\infty(\Omega))$, then
\begin{equation} \label{02-00}
\|e_h(t)\|_{L^\infty(\Omega)}\leq 
ch^2l_h^2 \left(t^{\atwo-\aone -1}\|v\|_{ L^\infty}+t^{\atwo-1/{p}}\|f\|_{ L^p(0,T;L^\infty(\Omega))}\right).
\end{equation}
%where $\bar{p}=p/(p-1)$ if $1/\atwo<p<\infty$, and $\bar{p}=1$ if $p=\infty$.
 \end{theorem}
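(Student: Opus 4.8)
The plan is to decompose the solution into its homogeneous and inhomogeneous parts, $u = u^{(1)} + u^{(2)}$, where $u^{(1)}(t) = E(t)v$ solves \eqref{main} with $f=0$, and $u^{(2)}$ solves \eqref{main} with $v=0$. Correspondingly $u_h = u_h^{(1)} + u_h^{(2)}$ and $e_h = (u_h^{(1)} - u^{(1)}) + (u_h^{(2)} - u^{(2)})$. The first error term is already controlled: in case (a) by the interpolated estimate \eqref{02-p}, contributing the $t^{(\atwo-\aone-1)(2-q)/2}\|v\|_{\dot H^q}$ term, and in case (b) by Theorem \ref{nonsmooth-FE-infty}, contributing $t^{\atwo-\aone-1}\|v\|_{L^\infty}$. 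So the whole task reduces to bounding the inhomogeneous error $e_h^{(2)}(t) := u_h^{(2)}(t) - u^{(2)}(t)$ without invoking the Laplace transform of $f$.

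The key step is a Duhamel-type representation. From \eqref{m1-0}–\eqref{m2} one sees that the inhomogeneous solution can be written as a convolution $u^{(2)}(t) = \int_0^t \bar E(t-s) f(s)\,ds$, where $\bar E$ has Laplace transform $\widehat{\bar E}(z) = (1+az^\aone)^{-1}\hat E(z) = \mu^{-1}(1+bz^\atwo)^{-1}(g(z)I+A)^{-1}$, and similarly $u_h^{(2)}(t) = \int_0^t \bar E_h(t-s) P_h f(s)\,ds$ with $\widehat{\bar E_h}(z) = \mu^{-1}(1+bz^\atwo)^{-1}(g(z)I+A_h)^{-1}$. Hence
$$
e_h^{(2)}(t) = \int_0^t \bigl(\bar E_h(t-s)P_h - \bar E(t-s)\bigr) f(s)\,ds,
$$
and the kernel error $F_h(t) := \bar E_h(t)P_h - \bar E(t)$ has the contour representation $F_h(t) = \frac{1}{2\pi i}\int_{\Gamma_{\theta,\delta}} e^{zt} \mu^{-1}(1+bz^\atwo)^{-1} S_h(z)\,dz$ with $S_h(z)$ as in Lemma \ref{G}. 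Using $|(1+bz^\atwo)^{-1}| \le M_\theta b^{-1}|z|^{-\atwo}$ together with \eqref{000} (resp.\ \eqref{111}) and choosing $\delta = 1/t$, one estimates $\|F_h(t)\| \le Ch^2 t^{\atwo-1}$ in the $L^2\to L^2$ operator norm, and $\|F_h(t)\|_{L^\infty\to L^\infty} \le Ch^2 l_h^2 t^{\atwo-1}$; the $H^1$ bound is identical up to the factor $h$. One should also record the analogous bound with the extra factor $h$ removed and a gradient on the left, exactly as in \eqref{01-inh}.

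Finally I would insert these kernel bounds into the Duhamel integral and apply Hölder's inequality in time: with $p > 1/\atwo$ so that $(1-\atwo)$ has conjugate exponent $p' = p/(p-1) < 1/(1-\atwo)$, the function $s \mapsto (t-s)^{\atwo-1}$ lies in $L^{p'}(0,t)$ and $\|(t-\cdot)^{\atwo-1}\|_{L^{p'}(0,t)} = c\, t^{\atwo - 1/p}$. This yields
$$
\|e_h^{(2)}(t)\| + h\|\nabla e_h^{(2)}(t)\| \le Ch^2 t^{\atwo - 1/p} \|f\|_{L^p(0,T;L^2(\Omega))},
$$
and likewise for the $L^\infty$ version with the extra $l_h^2$; combining with the homogeneous contribution gives \eqref{01-00} and \eqref{02-00}. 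The main obstacle is establishing the kernel estimate $\|F_h(t)\| \le Ch^2 t^{\atwo-1}$ cleanly — in particular verifying that the $|z|^{-\atwo}$ gain from $(1+bz^\atwo)^{-1}$ interacts correctly with the $\delta=1/t$ contour so that the small-arc and the two rays both produce the $t^{\atwo-1}$ power and no spurious logarithm (beyond the unavoidable $l_h^2$ in the $L^\infty$ case) appears; once that is in hand, the Hölder step is routine and the integrability restriction $p>1/\atwo$ is exactly what makes $(t-s)^{\atwo-1}$ integrable against an $L^p$ function.
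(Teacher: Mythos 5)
Your proposal is correct and follows essentially the same route as the paper: the paper likewise splits the error into the homogeneous contribution (bounded by \eqref{02-p} and \eqref{03}) plus a Duhamel convolution against the kernel $\bar G_h(t)=H_h(t)P_h-H(t)$ with $\hat H(z)=\mu^{-1}(1+bz^{\atwo})^{-1}(g(z)I+A)^{-1}$ — exactly your $F_h$ — bounds $\|\bar G_h(t)\|\leq ch^2 t^{\atwo-1}$ (resp.\ $ch^2l_h^2t^{\atwo-1}$ in $L^\infty$) via Lemma \ref{G} and the resolvent-type bound on $(1+bz^{\atwo})^{-1}$, and finishes with the same H\"older step giving $t^{\atwo-1/p}$ under $p>1/\atwo$. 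The kernel estimate you flag as the main obstacle goes through exactly as you describe with the contour $\Gamma_{\theta,1/t}$.
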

\begin{proof} Set $G_h(t)= E_h(t)P_h -  E(t)$ and $\bar {G}_h(t)=H_h(t)P_h-H(t)$ where 
\\$H(t)=\mathcal{L}^{-1}\left\{\dfrac{1}{\mu (1+bz^\atwo)}\left(g(z)I+A\right)^{-1}\right\}$. Then, by applying the Laplace transform to  \eqref{a1} and \eqref{FE-semi-P}, 
we  represent the error by 
\begin{equation*}%\label{mm-9}
e_h(t)= G_h(t)v+\int_0^t\bar{G}_h(t-s){f}(s)\,ds:=I+II.
\end{equation*}
The first term $I$ is already bounded in  \eqref{02-p}. For the second term, we apply Lemma \ref{G} to get
\begin{equation*}\label{mm-10}
\|\bar{G}_h(t)\|=\left\|\frac{1}{2\pi i}\int_\Gamma e^{zt}\dfrac{1}{\mu (1+bz^\atwo)}S_h(z)\,dz\right\|\leq ch^2\min\left\{t^{-1},t^{\atwo-1}\right\}\leq c_Th^2t^{\atwo-1}.
\end{equation*}
Using this bound and   H\"older's inequality, it follows that 
\begin{eqnarray*}
\|II\|&\leq & ch^2  \int_0^t (t-s)^{\atwo-1}\|f(s)\| \,ds\\
&\leq & ch^2 \left(\frac{t^{(\atwo-1)\bar{p} +1}}{(\atwo-1)\bar{p} +1}\right)^{1/{\bar{p}}}\|f\|_{ L^p(0,T;L^2(\Omega))},
\end{eqnarray*}
where $\bar p$ is the conjugate exponent of  $p$. This proves \eqref{01-00}.
For the $L^\infty(\Omega)$-estimate,  we first notice the bound
$$\|\bar{G}_h(t)\chi\|_{L^\infty(\Omega)}\leq ch^2l_h^2 t^{\atwo-1} \|\chi\|_{L^\infty(\Omega)},$$ 
and, as a consequence, we obtain
$$
\|II\|_{L^\infty(\Omega)}\leq ch^2l_h^2  \left(\frac{t^{(\atwo-1)\bar{p} +1}}{(\atwo-1)\bar
{p} +1}\right)^{1/{\bar{p}}}\|f(s)\|_{ L^p(0,T;L^\infty(\Omega))}\,ds.
$$
%Now, we can state the following results. 
%Then,  the estimate  \eqref{02-00} follows analogously, 
This bound with (\ref{03}) yields the desired  estimate (\ref{02-00}).  
%which now completes the proof of the theorem.
\end{proof}

%\begin{remark} The nonsmooth data error estimates in Theorem $\ref{nonsmooth-FE-n}$ yield improvements upon the results established in  \cite{MT2010,McLeanThomee2010}. 
%\end{remark}
\begin{comment}
\begin{remark}  Our analysis extends, upon minor modifications,  to problems  with more general sectorial  operators,
including a stongly elliptic second-order differential operator 
$$A u =-\nabla\cdot(\kappa(x) \nabla u) + c(x) u,$$
where the tensor $\kappa(x):{\mathbb R}^2\rightarrow {\mathbb R}^{2\times2}$ is smooth and has a positive minimum eigenvalue 
$\lambda_1(\kappa(x))\geq \kappa_0$ for some $\kappa_0>0$  and  
$c(x)\in L^\infty(\Omega)$ with $c(x)\geq c_0 >0$ almost everywhere.
%The corresponding  bilinear form $a(\cdot,\cdot):H_0^1(\Omega)\times H_0^1(\Omega)\rightarrow \mathbb{R}$ 
%becomes
%\begin{equation*}\label{bilinear-n}
%a (w,\chi) = (\kappa(x) \nabla w,\nabla \chi)+(c(x)w, \chi)\;\;\;\forall \chi\in H^1_0(\Omega).
%\end{equation*}
\end{remark}

\end{comment}

%%%%%%%%%%%%%%%%%%%%%%%%%%%%%%%%%%%%%%%%%%%%%%%%%%%%%%%%%%%%%%%%%%%%%%
%%%%%%%%%%%%%%%%%%%%%%%%%%%%%%%%%%%%%%%%%%%%%%%%%%%%%%%%%%%%%%%%%%%%%%
%%%%%%%%%%%%%%%%%%%%%%%%%%%%%%%%%%%%%%%%%%%%%%%%%%%%%%%%%%%%%%%%%%%%%%

%%%%%%%%%%%%%%%%%%%%%%%%%%%%%%%%%%%%%%%%%%%%%%%%%%%%%%%%%%%%%%%%%%%%%%
%%%%%%%%%%%%%%%%%%%%%%%%%%%%%%%%%%%%%%%%%%%%%%%%%%%%%%%%%%%%%%%%%%%%%%
%%%%%%%%%%%%%%%%%%%%%%%%%%%%%%%%%%%%%%%%%%%%%%%%%%%%%%%%%%%%%%%%%%%%%%

%\newpage

\section {Time discretization}\label{sec:discrete}
\se
Now, we consider the time discretization of the semidiscrete problem (\ref{FE-semi-P}) 
by convolution quadratures generated by the backward Euler and the second-order backward difference methods.
We  divide the time interval $[0,T]$ into a uniform grid with a time step size $\tau=T/N$, $N$ a positive integer, and set $t_j=j\tau$.  Let $K(z)$ be the Laplace transform of a distribution $k(t)$ on the real line, which vanishes for $t<0$, has its singular support empty or concentrated at $t=0$ and which is analytic for $t>0$.
With $\partial_t$ denoting time differentiation, we then define $K(\partial_t)$ as the operator 
of (distributional) convolution with the kernel $k$: $K(\partial_t)\varphi=k\ast \varphi$ for a 
function $\varphi(t)$ with suitable smoothness. The convolution quadrature developed in \cite{LST-1996,Lubich-2006} and  initiated in \cite{Lubich-1986,Lubich-1988}  refers to an approximation
of $K(\partial_t)\varphi$ by a discrete convolution 
$K(\partial_\tau)\varphi$ at $t=t_n$  as
$$
K(\partial_\tau)\varphi(t_n) = \sum_{j=0}^n \omega_{n-j}(\tau) \varphi(t_j),
$$
where the quadrature weights $\{\omega_j(\tau)\}_{j=0}^{\infty}$ are determined by the generating 
power series
$$
\sum_{j=0}^\infty  \omega_j(\tau) \xi^j=K(\delta(\xi)/\tau)
$$
with $\delta(\xi)$ being a rational function, chosen as the quotient of the generating polynomials of 
a stable and consistent linear multistep method. We shall consider the Backward Euler (BE) 
and the second-order backward difference (SBD) methods, for which $\delta(\xi)=1-\xi$ and $\delta(\xi)=(1-\xi)+ (1-\xi)^2/2$, respectively.
%$$
%\delta(\xi)=\left\{
%\begin{array}{cl}
%(1-\xi), & \text{ BE, }\\
%(1-\xi)+ (1-\xi)^2/ 2& \text{ SBD. }
%\end{array} \right.
%$$
%
%
For the BE method,  the convolution quadrature formula for approximating the fractional integral $\partial_t^{-\alpha}\varphi$ is given by 
$$
 \partial_\tau^{-\alpha}\varphi(t_n) = \sum_{j=0}^n  \omega_{n-j} \varphi(t_j), \text{ where } 
\sum_{j=0}^\infty  \omega_{j} \xi^j=[(1-\xi)/\tau]^{-\alpha}, \quad \omega_j=\tau^\alpha(-1)^{j}
\left(\begin{array}{c}
-\alpha\\
j
\end{array}\right),
$$
while for the SBD method, the quadrature weights are provided by the formula \cite{Lubich-1986}:
$$
\omega_j=\tau^\alpha(-1)^{j}\left(\frac{2}{3}\right)^\alpha\sum_{l=0}^j
3^{-l}
\left(\begin{array}{c}-\alpha\\j-l\end{array}\right)
\left(\begin{array}{c}-\alpha\\l\end{array}\right).
$$
%%The weights $\{w_j\}$  can be computed efficiently vis Laplace transform [-]. 
%%In our analysis, we shall use the associativity of convolution which is also valid for the convolution quadrature 
An important property of the convolution quadrature is that it maintains some relations of the continuous convolution. 
For instance, the associativity of convolution is valid for the convolution 
quadrature  \cite{Lubich-2004} such as
\begin{equation}\label{s1}
K_1( \partial_\tau) K_2( \partial_\tau)=K_1 K_2( \partial_\tau)\quad 
\text{ and }\quad 
K_1( \partial_\tau)(k\ast\varphi)=(K_1(\partial_\tau)k)\ast\varphi.
\end{equation}

The approximation properties of the convolution quadrature, which represent the key  ingredient in the error analysis, are described in the following lemma, see \cite[Theorem 4.1]{Lubich-1988} and \cite[Theorem 2.2]{Lubich-2004}.

\begin{lemma}\label{lem:Lubich}
Let $K(z)$ be  analytic in the sector $\Sigma_\theta$ and satisfies
\begin{equation*}\label{s3}
\|K(z)\|\leq M|z|^{-\mu}\quad \forall z\in \Sigma_\theta,
\end{equation*}
for some real $\mu$ and $M$. Assume that the linear multistep method is strongly $A$-stable and of 
order $p\geq 1$. Then, for $\varphi(t)=ct^{\nu-1}$, the convolution quadrature satisfies
\begin{equation*}%\label{s4}
\|K(\partial_t)\varphi(t) - K( \partial_\tau)\varphi(t)\|  \leq \left\{
\begin{array}{ll}
C t^{\mu-1+\nu-p} \tau^p, & \nu\geq p,\\
C t^{\mu-1} \tau^\nu, & 0< \nu\leq p.
\end{array} \right.
\end{equation*}
\end{lemma}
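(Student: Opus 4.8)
The plan is to follow Lubich's classical argument, based on contour-integral (Laplace and generating-function) representations of the exact and the discrete convolutions. Take $c=1$ without loss of generality. Since the Laplace transform of $\varphi(t)=t^{\nu-1}$ is $\widehat\varphi(z)=\Gamma(\nu)z^{-\nu}$, inversion along the Hankel contour $\Gamma=\Gamma_{\theta,1/t}$ gives $K(\partial_t)\varphi(t)=\frac{\Gamma(\nu)}{2\pi i}\int_\Gamma e^{zt}K(z)z^{-\nu}\,dz$, the integral converging by $\|K(z)\|\le M|z|^{-\mu}$ and the exponential factor. For the discrete convolution, the relation $\sum_j\omega_j(\tau)\xi^j=K(\delta(\xi)/\tau)$ together with Cauchy's formula expresses $K(\partial_\tau)\varphi(t_n)=\sum_{j=0}^n\omega_{n-j}(\tau)\varphi(t_j)$ as an integral over a circle $|\xi|=\rho<1$; the substitution $\xi=e^{-z\tau}$ then transfers it onto a truncated Hankel-type contour $\Gamma^\tau$ inside the strip $|\mathrm{Im}\,z|<\pi/\tau$, producing $K(\partial_\tau)\varphi(t_n)=\frac{1}{2\pi i}\int_{\Gamma^\tau}e^{zt_n}K(\delta(e^{-z\tau})/\tau)\,\mu_\tau(z)\,dz$, where $\mu_\tau(z)=\tau\sum_{j\ge0}\varphi(t_j)e^{-jz\tau}$ is the discrete symbol of the sampled power.

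The estimate then rests on two standard structural facts about the generating function $\delta$ of a strongly $A$-stable method of order $p$: (i) there exist $\tau_0>0$ and $c_1,c_2>0$ such that $\delta(e^{-z\tau})/\tau\in\Sigma_\theta$ and $c_1|z|\le|\delta(e^{-z\tau})/\tau|\le c_2|z|$ whenever $|z|\tau\le\tau_0$; and (ii) the consistency bound $|\delta(e^{-z\tau})/\tau-z|\le C\tau^p|z|^{p+1}$ for $|z|\tau\le\tau_0$. In addition, an Euler--Maclaurin (polylogarithm) expansion of $\mu_\tau$ shows $|\mu_\tau(z)-\Gamma(\nu)z^{-\nu}|\le C\tau^\nu$ uniformly for $|z|\tau\le\tau_0$. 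Subtracting the two representations and comparing the integrands on a common contour --- legitimate because, by (i), $K(\delta(e^{-z\tau})/\tau)$ is analytic and decaying in the strip --- the error integrand splits as $[K(\delta(e^{-z\tau})/\tau)-K(z)]\,\mu_\tau(z)+K(z)\,[\mu_\tau(z)-\Gamma(\nu)z^{-\nu}]$. On the portion $\{|z|\le 1/\tau\}$ the first summand is $O(\tau^p|z|^{p-\mu-\nu})$ (using (i), (ii) and the Cauchy estimate $|K'(z)|\lesssim|z|^{-\mu-1}$) and the second is $O(\tau^\nu|z|^{-\mu})$; on the tail $\{|z|\ge 1/\tau\}$, where $e^{\mathrm{Re}(z)t}\le e^{-cn}$, each term is bounded directly and the exponential dominates the polynomial factors, which also absorbs the mismatch between $\Gamma^\tau$ and $\Gamma$. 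Rescaling $z\mapsto z/t$ and using $\int_\Gamma e^{\mathrm{Re}(z)t}|z|^s\,|dz|\le Ct^{-s-1}$, the two interior contributions become $Ct^{\mu-1+\nu-p}\tau^p$ and $Ct^{\mu-1}\tau^\nu$; since $t=t_n\ge\tau$, the first dominates when $\nu\ge p$ and the second when $0<\nu\le p$, and $t\le T$ keeps $C$ uniform when $\mu-1+\nu-p>0$. This is the asserted two-case bound.

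The step I expect to be the main obstacle is the bookkeeping forced by the singular sampled data: when $0<\nu<1$ the grid value $\varphi(t_0)=t_0^{\nu-1}$ is infinite, so $\mu_\tau$ and hence the discrete representation are not literally defined as written. One handles this exactly as in Lubich's treatment --- either by setting $\varphi(0):=0$ and carrying the extra term $\omega_n(\tau)\varphi(0)$ along, or, more cleanly, by never sampling $\varphi$ and working throughout with the analytic symbol $z^{-\nu}$ and its discrete counterpart $\mu_\tau$, so that the whole estimate reduces to comparing two functions analytic on $\Sigma_\theta$. The remaining points are routine: justifying the two contour deformations (from $|\xi|=\rho$ to $\xi=e^{-z\tau}$, and then onto the truncated Hankel contour), which follow from the analyticity of $K$ in $\Sigma_\theta$ together with property (i), and the two elementary scaling integrals above. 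For a complete treatment we refer to \cite[Theorem 4.1]{Lubich-1988} and \cite[Theorem 2.2]{Lubich-2004}.
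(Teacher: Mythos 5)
The paper offers no proof of this lemma: it is quoted verbatim from \cite[Theorem 4.1]{Lubich-1988} and \cite[Theorem 2.2]{Lubich-2004}, and your sketch is a faithful reconstruction of exactly that standard argument (dual contour representations, the two structural properties of $\delta$ for a strongly $A$-stable method of order $p$, the polylogarithm expansion of the discrete data symbol, and the dominance comparison via $t_n\geq\tau$). Your approach therefore coincides with the one the paper relies on, and the obstacles you single out --- the $j=0$ singularity of $t^{\nu-1}$ and the contour deformations --- are indeed the points where Lubich's original treatment does the real work.
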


%%%%%%%%%%%%%%%%%%%%%%%%%%%%%%%%%%%%%%%%%%%%%%%%%%%%%%%%%%%%%%   BE

\subsection{Error analysis for the BE method}
We start by considering the time discretization of  ($\ref{FE-semi-P}$) by the convolution 
quadrature based on the BE method. To this end, we  integrate (\ref{FE-semi-P}) from $0$ to $t$  
and use the initial condition  $(I^{1-\alpha}u_{ht})(0)=0$ to get
\begin{equation*}
\left( u_h-v_h+a\left(\partial_t^{\aone-1} u_{ht}\right)\right)   +\mu\left( \partial_t^{-1}+b \partial_t^{\atwo -1}\right)  A_h u_h=\partial_t^{-1} f_h,
\end{equation*}
where $f_h= P_h f$.
This can be simplified as 
\begin{equation*}
\left( u_h-v_h+a\left(\partial_t^{\aone} u_h-\partial_t^{\aone} v_h\right) \right)  +\mu\left( \partial_t^{-1}+b \partial_t^{\atwo -1}\right)  A_h u_h=\partial_t^{-1} f_h,
\end{equation*}
and after rearrangements, we get
\begin{equation}\label{s5}
\left( 1+a\partial_t^{\aone}\right)  u_h  +\mu\left( \partial_t^{-1}+b \partial_t^{\atwo -1}\right)  A_h u_h=(1+a\partial_t^{\aone} )v_h+\partial_t^{-1} f_h.
\end{equation}
 %
%where $\partial_t^{-\alpha}\varphi$ denotes the Riemann-Liouville fractional integral,  $\partial_t^{-\alpha}\varphi %= \omega_\alpha\ast \varphi$. 
The convolution terms in \eqref{s5} can then be approximated by convolution quadratures generated  by the BE method. Hence,  the fully discrete  solution $U_h^n$ satisfies at $t_n=n\tau$, 
\begin{equation}\label{s6}
\left( 1+a\partial_\tau^{\aone}\right)  U_h^{n}  +\mu\left( \partial_\tau^{-1}+b \partial_\tau^{\atwo -1}\right)  A_h U_h^{n}=(1+a\partial_\tau^{\aone} )v_h+\partial_\tau^{-1} f_h.
\end{equation}
If we apply $\partial_\tau$ to \eqref{s6} and use the associativity of convolution in \eqref{s1}, we arrive at the following scheme: with  $U_h^0=v_h$, find $U_h^n$ for $n=1,\cdots,N$ such that
\begin{equation*}%\label{BE}
\left( \partial_\tau+a\partial_\tau^{1+\aone}\right)  U_h^{n}  +\mu\left( 1+b \partial_\tau^{\atwo }\right)  A_h U_h^{n}=a\partial_\tau^{1+\aone} v_h+ f_h^n,
\end{equation*}
where $f_h^n=f_h(t_n)$. 
%For convenience, we start by deriving error estimates for the homogeneous problem ($f=0$). 
In view of \eqref{s5} and \eqref{s6}, we have for the homogeneous problem, 
$$ u_h = G(\partial_t)v_h\qquad  \mbox{and}\qquad  U^n_h = G(\partial_\tau)v_h,$$ where
$G(z)=g(z)(g(z)I+ A_h)^{-1}$ and $g(z)$ is the function defined in Lemma \ref{lem:g(z)}. Then, we may represent the   error  $U^n_h-{u}_h(t_n)$ at $t=t_n$  by 
\begin{equation}\label{s7-k}
U^n_h-{u}_h(t_n)= \left( G(\partial_\tau)- G(\partial_t)\right)v_h.
\end{equation}
If we let   $ G_1(z)=- g^{-1}(z)  G(z)$ and noting that
\begin{equation}\label{id-5}
G(z)=I-g^{-1}(z) G(z)A_h,
\end{equation}
we deduce that
\begin{equation}\label{s8}
U^n_h-u_h(t_n)=\left(G_1(\partial_\tau) -  G_1(\partial_t)\right)A_hv_h.
\end{equation}
Then, by  Lemma \ref{lem:Lubich}, we obtain the following error estimates for smooth and nonsmooth initial data.
% we now derive error bounds for smooth and nonsmooth initial data.
%
\begin{lemma}\label{lem:BE}
Let ${u}_h$ and $U^n_h$ be the solutions of problems $(\ref{s5})$ and $(\ref{s6})$, respectively, with 
$f=0$.  Then, the following estimates hold:

(a) If $v\in \dot{H}^2(\Omega)$ and $v_h =R_hv$, then
\begin{equation}\label{s10}
\|U^n_h-u_h(t_n)\|\leq C \tau (t_n^{\aone}+b t_n^{\aone-\atwo})\|v\|_{\dot{H}^2(\Omega)}.
%\|U^n_h-{u}_h(t_n)\|+ t_n^{\atwo/2} \|\nabla(U^n_h-{u}_h(t_n))\|\leq C \tau (t_n^{\atwo-1}+\gamma %t_n^{\aone-1})|v|_2.
\end{equation}

(b) If $v\in L^2(\Omega)$ and $v_h =P_hv$, then
\begin{equation}\label{s9}
\|U^n_h-u_h(t_n)\|\leq C \tau t_n^{-1}\|v\|.
%\|U^n_h-{u}_h(t_n)\|+ t_n^{\atwo/2} \|\nabla(U^n_h-{u}_h(t_n))\|\leq C \tau t_n^{-1}\|v\|.
\end{equation}

\end{lemma}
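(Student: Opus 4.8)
The plan is to apply Lemma \ref{lem:Lubich} to the two representations \eqref{s8} (for smooth data) and \eqref{s7-k} (for nonsmooth data), after establishing the requisite decay bounds on the relevant operator-valued symbols. For part (a), the starting point is \eqref{s8}, which expresses the error in terms of the convolution quadrature error for the symbol $G_1(z)=-g^{-1}(z)G(z)=-g^{-1}(z)g(z)(g(z)I+A_h)^{-1}=-(g(z)I+A_h)^{-1}$ applied to $A_hv_h$. Using the resolvent estimate \eqref{res1} (valid for $A_h$ by Lemma \ref{lem:Ah2}) together with the lower bound on $|g(z)|$ that follows from Lemma \ref{lem:g(z)}, namely $|g(z)|\geq c\,\mu^{-1}(|z|+a|z|^{\aone+1})\min\{1,b^{-1}|z|^{-\atwo}\}^{-1}$ — equivalently $|g(z)|^{-1}\leq C\mu(|z|^{-1}+b|z|^{\atwo-1})(1+a|z|^{\aone})^{-1}\leq C\mu\min\{|z|^{-1}+b|z|^{\atwo-1},\,a^{-1}(|z|^{-\aone-1}+b|z|^{\atwo-\aone-1})\}$ — I obtain $\|G_1(z)\|=\|(g(z)I+A_h)^{-1}\|\leq M_{\bar\theta}|g(z)|^{-1}\leq C(|z|^{-1}+b|z|^{\atwo-1})$ on $\Sigma_\theta$. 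Thus $G_1(z)$ satisfies the hypothesis of Lemma \ref{lem:Lubich} with the two exponents $\mu=1$ and $\mu=1-\atwo$. Applying the lemma with $\varphi\equiv\text{const}$ (i.e. $\nu=1$, $p=1$ for BE) to each piece and combining gives $\|(G_1(\partial_\tau)-G_1(\partial_t))A_hv_h\|\leq C\tau(t_n^{1-1+1-1}+bt_n^{1-\atwo-1+1-1})\|A_hv_h\|=C\tau(t_n^0\cdot t_n^{-1}\cdot t_n + \dots)$; more carefully, the $|z|^{-1}$ part contributes $C t_n^{1-1}\tau=C\tau$-type behavior multiplied by $t_n^{\aone}$-type factors once the full $G_1$ bound $C(|z|^{-1}+b|z|^{\atwo-1})$ combined with the extra $|z|$-decay from $G$ is tracked, yielding the stated $C\tau(t_n^{\aone}+bt_n^{\aone-\atwo})\|v\|_{\dot H^2(\Omega)}$. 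Finally I use $\|A_hv_h\|=\|A_hR_hv\|=\|P_hAv\|\leq\|Av\|\leq\|v\|_{\dot H^2(\Omega)}$.

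For part (b), I work directly from \eqref{s7-k} with the symbol $G(z)=g(z)(g(z)I+A_h)^{-1}$. Here the key bound is $\|G(z)\|\leq C$ uniformly on $\Sigma_\theta$ — indeed $G(z)=I-g^{-1}(z)G(z)A_h$ does not immediately help, so instead I write $\|G(z)\|=\|g(z)(g(z)I+A_h)^{-1}\|\leq|g(z)|\cdot M_{\bar\theta}|g(z)|^{-1}=M_{\bar\theta}$ from \eqref{res1}. But a uniform bound ($\mu=0$) only gives $\tau^\nu$ with $\nu$ arbitrarily small, which is not enough; the correct approach is to note that $\hat E_h(z)=z^{-1}G(z)$ and that the map $v_h\mapsto u_h$ is $z^{-1}G(z)$, so differentiating once in time (or equivalently noting $u_h=G(\partial_t)v_h$ with $G$ the symbol for $u_h$ itself, which already carries the $z^{-1}$ inside via the relation to $\hat E_h$) — re-examining \eqref{s5}–\eqref{s7-k}, the symbol acting on $v_h$ to produce $u_h$ is precisely $G(z)=g(z)(g(z)I+A_h)^{-1}$ with the time-integration already absorbed, so $G$ is genuinely $O(1)$ and we need the sharper route. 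I therefore split $G(z)=I-(g(z)I+A_h)^{-1}A_h$ and observe the first term $I$ produces zero quadrature error (convolution of a constant-in-$z$ symbol reproduces exactly for $\nu=1$), so the error equals $-((g(z)I+A_h)^{-1}A_h)$-quadrature-error applied to $v_h$; writing $(g(z)I+A_h)^{-1}A_h=I-g(z)(g(z)I+A_h)^{-1}$ is circular, so instead bound $\|(g(z)I+A_h)^{-1}A_h\|$: this operator has symbol with norm $\leq 1$ but I instead keep it as is and use $\|(g(z)I+A_h)^{-1}\|\leq C(|z|^{-1}+b|z|^{\atwo-1})$ composed with $A_h$, which is not bounded on $L^2$ — the resolution is that we need only the estimate on $(g(z)I+A_h)^{-1}A_h^{1/2}$ type quantities, but since $v_h\in L^2$ only, the clean path is: $U_h^n-u_h(t_n)=(G(\partial_\tau)-G(\partial_t))v_h$ with $\|G(z)\|\leq M_{\bar\theta}$, and additionally $\|G'(z)\|\leq C|z|^{-1}$ on $\Sigma_\theta$ (differentiating the resolvent and using the $|g(z)|$ bounds), which by the refined form of the convolution-quadrature error analysis (as in \cite{Lubich-1988,Lubich-2004}) yields $\|(G(\partial_\tau)-G(\partial_t))v_h\|\leq C\tau t_n^{-1}\|v_h\|$. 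Then $\|v_h\|=\|P_hv\|\leq\|v\|$ closes part (b).

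The main obstacle I anticipate is part (b): obtaining the $t_n^{-1}\tau$ rate for $L^2$ data requires more than the bald uniform bound $\|G(z)\|\leq C$ fed into Lemma \ref{lem:Lubich} (that lemma as stated needs a negative or positive power $|z|^{-\mu}$ and the generic $\mu=0$ case gives only $\tau^\nu$ for $0<\nu\le p$, hence no clean $t_n^{-1}\tau$). One must either invoke the sharper perturbation-type estimate for convolution quadrature with symbols satisfying $\|G(z)\|\le C$ and $\|G'(z)\|\le C|z|^{-1}$ — which is exactly the regime treated in the Lubich–Sloan–Thomée framework cited — or exploit the structure $G(z)=I-(g(z)I+A_h)^{-1}A_h$ to reduce to a symbol with genuine decay after pairing $A_h$ appropriately with the resolvent. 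I would present part (b) via this splitting and the known refined estimate, citing \cite{LST-1996,Lubich-2004}, so that the constant-symbol part is reproduced exactly and the remainder carries the resolvent decay; the bookkeeping with the $\min\{\cdot,\cdot\}$ in the $|g(z)|^{-1}$ bound, and verifying that the contour integral over $\Gamma_{\theta,1/t_n}$ produces precisely the powers $t_n^{\aone}$ and $t_n^{\aone-\atwo}$ in part (a), is routine but must be done carefully to match the stated constants.
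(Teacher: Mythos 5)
Your overall strategy is the paper's: represent the error via \eqref{s7-k} for nonsmooth data and \eqref{s8} for smooth data, bound the symbols $G$ and $G_1=-(g(z)I+A_h)^{-1}$, and invoke Lemma \ref{lem:Lubich}. However, both parts contain concrete errors in execution. In part (a) you correctly record that $|g(z)|^{-1}\leq C\mu\min\{|z|^{-1}+b|z|^{\atwo-1},\,a^{-1}(|z|^{-\aone-1}+b|z|^{\atwo-\aone-1})\}$, but you then feed the \emph{first} branch of the min into Lemma \ref{lem:Lubich}, taking $\mu=1$ and $\mu=1-\atwo$. With $\nu=p=1$ that yields $C\tau$ and $C\tau t_n^{-\atwo}$, which is strictly weaker than the claimed $C\tau(t_n^{\aone}+bt_n^{\aone-\atwo})$. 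Your attempted repair --- "extra $|z|$-decay from $G$" --- has nothing left to draw on, since $G_1(z)=-(g(z)I+A_h)^{-1}$ has already consumed the factor $G(z)$. The fix is simply to use the \emph{second} branch of the min, i.e.\ $\|G_1(z)\|\leq \frac{M_\theta\mu}{a}(|z|^{-1-\aone}+b|z|^{\atwo-1-\aone})$, so that Lemma \ref{lem:Lubich} applies with $\mu=1+\aone$ and $\mu=1+\aone-\atwo$ and produces exactly $C\tau(t_n^{\aone}+bt_n^{\aone-\atwo})\|A_hv_h\|$; the conclusion then follows from $A_hR_h=P_hA$ and the $L^2$-stability of $P_h$, as you say.

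In part (b) you talk yourself out of the correct one-line argument. You assert that the uniform bound $\|G(z)\|\leq M_{\bar\theta}$ (i.e.\ $\mu=0$ in Lemma \ref{lem:Lubich}) "only gives $\tau^\nu$ with $\nu$ arbitrarily small," but this misreads the lemma: for the constant function $\varphi=v_h$ one has $\nu=1$, and with $p=1$ (BE) the lemma gives $Ct_n^{\mu-1+\nu-p}\tau^p=Ct_n^{-1}\tau$ directly when $\mu=0$. No derivative bound $\|G'(z)\|\leq C|z|^{-1}$, no refined perturbation estimate, and none of the splittings $G=I-(gI+A_h)^{-1}A_h$ you circle through are needed; indeed that splitting cannot work for $v\in L^2(\Omega)$ precisely because $A_hv_h$ is then not controlled. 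The paper's proof of (b) is exactly the direct application you initially wrote down and then abandoned, followed by $\|P_hv\|\leq\|v\|$. Both gaps are repairable, but as written neither part establishes the stated powers of $t_n$.
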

\begin{proof} We recall that, by \eqref{res1}, 
\begin{equation}\label{s00}
\|G(z)\|\leq M_\theta\quad  \forall z\in \Sigma_\theta.
\end{equation}
Then, we have
$$\| G_1(z)\|\leq M_{\theta}|g(z)|^{-1}\leq  M_{\theta}\mu\left( |z|^{-1}+b|z|^{\atwo-1}\right) \min\left\lbrace {1,\frac{|z|^{-\aone}}{a}}\right\rbrace .$$% and 
%$$\| G_2(z)\|  \leq c\mm |z|^{-\atwo} \quad   \forall z\in \Sigma_\theta.$$
Applying  Lemma \ref{lem:Lubich} to \eqref{s8} with $\nu=1$, $p=1$ and  $\mu=\aone+1, 1+\aone-\atwo$, respectively, 
we deduce that
%$$
%\|U^n_h-\bar{u}_h(t_n)\|\leq C \tau \left(t_n^{\atwo-1}+\min\{t_n^{-1}, \gamma^{-1}_nt^{-\aone-1}\}\right)\|\bar %A_hv_h\|.
%$$
$$
\|U^n_h-u_h(t_n)\|\leq C \tau \frac{\mu}{a}\left( t_n^{\aone}+b t_n^{\aone-\atwo}\right)\| A_hv_h\|.
$$
As $v_h=R_h v$, we use the identity  $ A_hR_h= P_hA$ so that
$$
\| A_h v_h\| =\| A_h R_h v\| = \| P_h A v\| \leq C \|A v\|=C \|v\|_{\dot{H}^2(\Omega)},
$$
where the last inequality follows from the $L^2(\Omega)$-stability of $ P_h$. This shows \eqref{s10}.

To derive \eqref{s9}, we use \eqref{s00} and 
 apply Lemma \ref{lem:Lubich} to \eqref{s7-k} with $\mu=0$, $\nu=1$ and $p=1$,  to obtain
$$
\|U^n_h-{u}_h(t_n)\|\leq C \tau t_n^{-1}\|v_h\|.
$$
The estimate  follows then by  the $L^2(\Omega)$-stability of $P_h$, which completes the proof.
%This completes the rest of the  proof.
%The desired estimate follows directly from the identity $\bar A_hR_h=\bar P_hA$. ({\bf Check})
\end{proof}

%****************

Now, recalling the estimates derived  in Theorems \ref{nonsmooth-FE} and \ref{smooth-FE} for the semidiscrete problem, 
we summarize our results in the next theorem as follows. 
%taking into account 
%the estimates derived  in Theorems \ref{nonsmooth-FE} and \ref{smooth-FE} for the semidiscrete problem.
%, allow us to  
%we summarize our results in the next theorem.
%, by using the results derived in Theorems \ref{nonsmooth-FE} and \ref{smooth-FE} for the semidiscrete problem.
%%we establish  error bounds for the fully discrete scheme 
%\eqref{s6} with smooth and nonsmooth initial data.

\begin{theorem}\label{thm:BE}
Let $u$ and $U^n_h$ be the solutions of problems $(\ref{main})$ and $(\ref{s6})$, respectively, with $f=0$. Then, the following error estimates hold:

(a) If $v\in \dot{H}^2(\Omega)$ and $v_h =R_hv$, then
\begin{equation*}%\label{s12}
\|U^n_h-u(t_n)\|\leq C (h^2+\tau t_n^{\aone}+\tau b t_n^{\aone-\atwo})\|v\|_{\dot H^2(\Omega)}.
\end{equation*}

(b) If $v\in L^2(\Omega)$ and $v_h =P_hv$, then
\begin{equation*}%\label{s14}
\|U^n_h-u(t_n)\|\leq  C (h^2t_n^{\atwo-\aone-1}+\tau t_n^{-1})\|v\|.
\end{equation*}
\end{theorem}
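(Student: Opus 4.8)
The plan is to combine the fully discrete temporal error bounds of Lemma~\ref{lem:BE} with the spatial semidiscrete error bounds of Theorems~\ref{smooth-FE} and \ref{nonsmooth-FE} via the triangle inequality, writing
\[
U_h^n - u(t_n) = \bigl(U_h^n - u_h(t_n)\bigr) + \bigl(u_h(t_n) - u(t_n)\bigr),
\]
so that $\|U_h^n - u(t_n)\| \le \|U_h^n - u_h(t_n)\| + \|e_h(t_n)\|$. Each summand has already been controlled in the excerpt, so the only work is to record the matching data-regularity hypotheses and add the bounds.

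For part~(a), with $v\in\dot H^2(\Omega)$ and $v_h = R_hv$, Theorem~\ref{smooth-FE} gives $\|e_h(t_n)\| \le ch^2\|v\|_{\dot H^2(\Omega)}$, while Lemma~\ref{lem:BE}(a) gives $\|U_h^n - u_h(t_n)\| \le C\tau\,(t_n^{\aone} + b\,t_n^{\aone-\atwo})\|v\|_{\dot H^2(\Omega)}$; summing yields the claimed estimate. For part~(b), with $v\in L^2(\Omega)$ and $v_h = P_hv$, Theorem~\ref{nonsmooth-FE} gives $\|e_h(t_n)\| \le Ch^2 t_n^{\atwo-\aone-1}\|v\|$ and Lemma~\ref{lem:BE}(b) gives $\|U_h^n - u_h(t_n)\| \le C\tau\,t_n^{-1}\|v\|$; adding these produces the stated bound. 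One should note that the two lemmas are built on the compatible choices $v_h = R_hv$ (smooth case) and $v_h = P_hv$ (nonsmooth case), so the decompositions line up without any extra estimate for the difference of projections.

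Since every ingredient is already proved, there is essentially no obstacle here: the theorem is a bookkeeping corollary. The only point requiring a moment's care is to confirm that the singular temporal weights $t_n^{\aone-\atwo}$ and $t_n^{-1}$ appearing in Lemma~\ref{lem:BE} are exactly those one expects to see, and that no hidden dependence on $t_n\to 0$ has been swept under the constant $C$; this is immediate from the explicit forms already displayed. I would therefore present the proof in two short paragraphs, one per case, each consisting of the triangle inequality followed by a citation of the relevant lemma and theorem.
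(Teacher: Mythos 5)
Your proposal is correct and is exactly the paper's argument: the authors state the theorem immediately after Lemma \ref{lem:BE} with the remark that it follows by ``recalling the estimates derived in Theorems \ref{nonsmooth-FE} and \ref{smooth-FE} for the semidiscrete problem,'' i.e.\ the same triangle-inequality decomposition into temporal and spatial errors with the matching choices $v_h=R_hv$ and $v_h=P_hv$. Nothing is missing.
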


%********************** 
\begin{comment}
%
\begin{remark}\label{rem:BE} The estimate \eqref{s12} holds also for the choice $v_h =P_hv$ in view of the $L^2(\Omega)$-stability of the scheme  and  the argument in Remark \ref{r2}. Hence, by interpolation, we find that
 \begin{equation*}\label{s15}
\|U^n_h-u(t_n)\|\leq C (h^2t_n^{(\atwo-\aone-1)(2-q)/2}+\tau t_n^{\atwo/2-1}+ \gamma\tau t_n^{\aone/2-1})\|v\|_{\dot H^1(\Omega)}.
\end{equation*}

%For $v\in \dot{H}^2(\Omega)$, we can choose  $v_h =P_hv$ in Lemma \ref{lem:BE}. Indeed, let $\tilde U_h^n$ be the  %solution of the fully discrete scheme \eqref{s6} with $v_h =P_hv$. 
%Then, by the stability of the scheme, we deduce 
%$\|U^n_h-\tilde U^n_h\|\leq c \|R_hv-P_hv\|\leq  c h^2\|v\|_{\dot H^2(\Omega)}.$ This shows that
%\begin{equation}\label{s11}
%\|\tilde U^n_h-\bar{u}_h(t_n)\|\leq c (\tau t_n^{\atwo-1} + \gamma \tau t_n^{\aone-1} +h^2)\|v\|_{\dot H^2(\Omega)}.
%\end{equation}
%By interpolating \eqref{s14} and \eqref{s11} we obtain for  $v_h =P_hv$,
%\begin{equation}\label{s11a}
%\|U^n_h-\bar{u}_h(t_n)\|\leq c (\tau t_n^{-1})^{1/2} (\tau t_n^{\atwo-1} + \gamma \tau t_n^{\aone-1}+h^2)^{1/2}\|v\|%_{\dot H^1(\Omega)}.
%\end{equation}
% By combining the estimate \eqref{e1} (with $q=1$), which holds for $v_h =P_hv$ as stated in Remark \ref{r2}, and %\eqref{s11a}, we arrive that
%\begin{equation*}\label{s15}
%\|U^n_h-u(t_n)\|\leq c (h^2t_n^{-\atwo/2}+\tau t_n^{\atwo/2-1}+ \gamma\tau t_n^{\aone/2-1}+\tau^{1/2}t_n^{-1/2}h)\|v\|%_{\dot H^1(\Omega)}.
%\end{equation*}
%Since the fourth  term in the brackets is equal to the square root of product of the first two terms, it follows that
%\begin{equation*}\label{s15}
%\|U^n_h-u(t_n)\|\leq c (h^2t_n^{-\atwo/2}+\tau t_n^{\atwo/2-1}+ \gamma\tau t_n^{\aone/2-1})\|v\|_{\dot H^1(\Omega)}.
%\end{equation*}
\end{remark}
\end{comment}
%******************************************

\begin{remark}\label{rem:BE-2} Note that, from \eqref{s7-k},
$$
\nabla(U^n_h-{u}_h(t_n))= \left( (\nabla G)(\partial_\tau)- (\nabla G)(\partial_t)\right)v_h.
$$
Since 
$
\|\nabla G(z)\| \leq   c |g(z)|^{1/2}\leq c |z|^{(1+\aone-\atwo)/2}, 
$
an application of Lemma \ref{lem:Lubich} with $\nu=1$ and  $\mu=-(1+\aone-\atwo)/2$ yields
$$
\|\nabla(U^n_h-{u}_h(t_n))\|\leq c \tau t_n^{(-1-\aone+\atwo)/2-1}\|v\|.
$$
Similarly, we  obtain for smooth initial data
$$
\|\nabla(U^n_h-{u}_h(t_n))\|\leq c \tau t_n^{(-1+\aone-\atwo)/2}\|v\|_{\dot H^2(\Omega)}.
$$
Thus, by interpolation, it follows that 
\begin{equation*}
\|\nabla(U^n_h-{u}_h(t_n))\|\leq c \tau t_n^{-1}\|v\|_{\dot H^1(\Omega)}.
\end{equation*}
%Now, by using \eqref{s15-b}, the Sobolev inequality, and the estimate \eqref{FV-3-n}, we obtain the 
%$L^\infty$-error estimate:
%\begin{eqnarray}\label{FV-3-nn}
% \|\nabla(U^n_h-{u}_h(t_n))\|_{L^{\infty}(\Omega)} & \leq &
% cl_n^2 h^2 t_n^{-\atwo}\left(\|v\|_{\dot H^1(\Omega)}+\|v\|_{L^\infty(\Omega)} \right)\nonumber\\
% && + cl_n^{1/2} \tau t_n^{-\atwo/2}(t_n^{\atwo/2-2}+\gamma t_n^{\aone/2-2}) \|v\|_{\dot H^1(\Omega)}.
%\end{eqnarray}
\end{remark}

%-------------------------------------------------------------------------------
%---------------------------   inhomogeneous problem  --------------------------
%-------------------------------------------------------------------------------

We now derive error estimates for the inhomogeneous problem with $v=0$.

\begin{theorem}\label{thm:BE-n}
Let $u$ be the solution of the problem $(\ref{main})$  with $v=0$ and $f\in L^{\infty}(0,T;L^2(\Omega))$, and let
$U^n_h$  be the solution of  $(\ref{s6})$ with $v_h=0$. Then, the following error estimate holds:
\begin{equation}\label{s12-n}
\|U^n_h-u(t_n)\|\leq  c 
\left(h^2 t_n^{\atwo}\|f\|_{L^\infty(0,T;L^2(\Omega))}+\tau t_n^{\aone}\|f(0)\|+\tau\int_0^{t_n} (t_n-s)^{\aone}\|f'(s)\|ds\right).
\end{equation}
\end{theorem}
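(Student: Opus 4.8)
The plan is to split the error into a spatial and a temporal part,
\[
U_h^n-u(t_n)=\big(U_h^n-u_h(t_n)\big)+\big(u_h(t_n)-u(t_n)\big),
\]
where $u_h$ denotes the semidiscrete solution, i.e.\ the solution of \eqref{s5} with $v_h=0$. For the spatial part, the Duhamel representation used in the proof of Theorem \ref{nonsmooth-FE-n} gives $u_h(t_n)-u(t_n)=\int_0^{t_n}\bar G_h(t_n-s)f(s)\,ds$; inserting the bound $\|\bar G_h(t)\|\le ch^2 t^{\atwo-1}$ established there and using $f\in L^\infty(0,T;L^2(\Omega))$ yields
\[
\|u_h(t_n)-u(t_n)\|\le ch^2\|f\|_{L^\infty(0,T;L^2(\Omega))}\int_0^{t_n}(t_n-s)^{\atwo-1}\,ds\le ch^2 t_n^{\atwo}\|f\|_{L^\infty(0,T;L^2(\Omega))},
\]
which is the first term on the right of \eqref{s12-n}.

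For the temporal part I would first record an operational representation: solving the identities \eqref{s5} and \eqref{s6} for $u_h$ and $U_h^n$ with $v_h=0$ gives, with $f_h=P_hf$,
\[
u_h(t)=\big(K_h(\partial_t)f_h\big)(t),\qquad U_h^n=\big(K_h(\partial_\tau)f_h\big)(t_n),
\]
\[
K_h(z):=\frac{1}{\mu(1+bz^\atwo)}\big(g(z)I+A_h\big)^{-1}=\frac{1}{z(1+az^\aone)}\,G(z),
\]
so that $U_h^n-u_h(t_n)=\big((K_h(\partial_\tau)-K_h(\partial_t))f_h\big)(t_n)$. The next step is the resolvent-type bound $\|K_h(z)\|\le C|z|^{-1-\aone}$ on $\Sigma_\theta$, which follows from $\|G(z)\|\le M_\theta$ (see \eqref{s00}) together with the lower bound $|z+az^{\aone+1}|\ge c(|z|+a|z|^{\aone+1})$; the latter holds because the summands $z$ and $az^{\aone+1}$ lie in a common sector $\Sigma_{\bar\theta}$ ($\bar\theta<\pi$) with arguments differing by $\aone\arg z$ and $\aone\theta<\pi/2$, exactly as in the argument for the numerator of $g(z)$ in Lemma \ref{lem:g(z)}.

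Finally I would treat the general source term by the splitting $f_h(t)=f_h(0)+\int_0^t f_h'(s)\,ds=f_h(0)\,\mathbf 1+\mathbf 1*f_h'$, where $\mathbf 1(t)\equiv 1$. By linearity and the (continuous and discrete) associativity of convolution in \eqref{s1},
\[
U_h^n-u_h(t_n)=\big(K_h(\partial_\tau)-K_h(\partial_t)\big)\big[f_h(0)\,\mathbf 1\big](t_n)+\Big(\big[K_h(\partial_\tau)\mathbf 1-K_h(\partial_t)\mathbf 1\big]*f_h'\Big)(t_n).
\]
Lemma \ref{lem:Lubich} applied with $\mu=1+\aone$, $\nu=1$ and $p=1$ (backward Euler is strongly $A$-stable of order one) gives, for any constant $c\in L^2(\Omega)$, $\|(K_h(\partial_\tau)-K_h(\partial_t))[c\,\mathbf 1](t)\|\le Ct^{\aone}\tau\|c\|$. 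Taking $c=f_h(0)$ and using $\|P_h\|\le 1$ on $L^2(\Omega)$ bounds the first term by $C\tau t_n^{\aone}\|f(0)\|$; taking $c=\mathbf 1$ gives the kernel estimate $\|K_h(\partial_\tau)\mathbf 1(t)-K_h(\partial_t)\mathbf 1(t)\|\le C\tau t^{\aone}$, which inserted into the convolution bounds the second term by $C\tau\int_0^{t_n}(t_n-s)^{\aone}\|f_h'(s)\|\,ds\le C\tau\int_0^{t_n}(t_n-s)^{\aone}\|f'(s)\|\,ds$. Collecting the three contributions yields \eqref{s12-n}. The step I expect to require the most care is the convolution-quadrature bookkeeping — justifying the representations $u_h=K_h(\partial_t)f_h$ and $U_h^n=K_h(\partial_\tau)f_h$ and the associativity identity that recasts $(K_h(\partial_\tau)-K_h(\partial_t))(\mathbf 1*f_h')$ as a genuine convolution of $f_h'$ against the quadrature error $K_h(\partial_\tau)\mathbf 1-K_h(\partial_t)\mathbf 1$; once these are in place, the quantitative estimates follow directly from Lemma \ref{lem:Lubich}.
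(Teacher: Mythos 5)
Your proposal is correct and follows essentially the same route as the paper: the paper likewise reduces to bounding $U_h^n-u_h(t_n)$ via Theorem \ref{nonsmooth-FE-n}, writes this difference as $(F_h(\partial_\tau)-F_h(\partial_t))f_h$ with $F_h(z)=\frac{1}{\mu(1+bz^{\atwo})}(g(z)I+A_h)^{-1}$ (your $K_h$), splits $f_h=f_h(0)+1\ast f_h'$, and applies Lemma \ref{lem:Lubich} with $\mu=1+\aone$, $\nu=1$. Your explicit derivation of the bound $\|K_h(z)\|\leq C|z|^{-1-\aone}$ via the sector argument for $|z+az^{1+\aone}|$ is a detail the paper leaves implicit, but it is the right justification.
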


\begin{proof}  Taking into account the estimate derived in Theorem \ref{nonsmooth-FE-n} for $v=0$, it suffices to bound $U^n_h-u_h(t_n)$.
In view of \eqref{s5} and \eqref{s6},  the error is represented  by
\begin{equation*}\label{s7-kn}
U^n_h-{u}_h(t_n)= (  F_h(\partial_\tau)-  F_h(\partial_t))f_h,
\end{equation*}
where $ F_h(z)=\dfrac{1}{\mu(1+bz^{\atwo})}(g(z)I+ A_h)^{-1}$. Using the expansion $f_h(t)=f_h(0)+(1\ast f_h')(t)$ and the 
second relation in \eqref{s1}, we have
\begin{equation*}\label{s7-knn}
U^n_h-{u}_h(t_n)= ( F_h(\partial_\tau)-  F_h(\partial_t))f_h(0)+(( F_h(\partial_\tau)-  F_h(\partial_t))1)\ast  f_h'(t_n)=:I+II.
\end{equation*}
Then, by Lemma \ref{lem:Lubich} (with $\mu=1+\aone$ and $\nu=1$) and the $L^2(\Omega)$-stability of $ P_h$, we obtain
$$
\|I\|\leq c\tau t_n^{\aone} \|f_h(0)\|\leq c\tau  t_n^{\aone}\|f(0)\|.
$$
For the second term, Lemma \ref{lem:Lubich} yields
$$
\|II\|\leq  \int_0^{t_n} \|(( F_h(\partial_\tau)-  F_h(\partial_t))1)(t_n-s) f_h'(s)\| \leq 
c\tau \int_0^{t_n} (t_n-s)^{\aone} \|f_h'(s)\|ds.
$$
%which completes the proof of \eqref{s12-n}.
Together, these estimates with \eqref{01-00} obtained for $v=0$ and $f\in L^\infty(0,T;L^2(\Omega))$ complete the proof of \eqref{s12-n}.
\end{proof}

%%%%%%%%%%%%%%%%%%%%%%%%%%%%%%%%%%%%%%%%%%%%%%%%%%%%%%%%%%%   SBD

\subsection{Error analysis for the SBD method}
Now, we consider the time discretization of  the semidiscrete problem ($\ref{FE-semi-P}$) by  convolution quadrature based on the second-order backward difference formula, and study convergence rates  for smooth and nonsmooth initial data. Recall that the semidiscrete solution $ u_h$ 
is given by
$$ 
 u_h=G(\partial_t)\left(v_h+(1+a\partial_t^{\aone})^{-1}\partial_t^{-1}f_h\right).
$$
From the estimates in Lemma \ref{lem:Lubich}, it is clear that  a second-order error bound cannot be achieved,  
if for instance, $\varphi$ is constant (i.e., $\nu=1$), even if a high-order multistep is used. To maintain the 
second-order time accuracy, we modify the scheme following the strategy proposed in \cite{LST-1996,Lubich-2006,EJLZ2016}. To do so, we use  \eqref{id-5} and the splitting 
$f_h=f_h(0)+\tilde f_h$, where $\tilde f_h= f_h-f_h(0)$,  so that $u_h$ can be represented by
\begin{equation}\label{k6n-0}
 u_h= v_h+G(\partial_t)\left(-g^{-1}(\partial_t) A_hv_h
+(1+a\partial_t^{\aone})^{-1}(\partial_t^{-1} f_h(0) + \partial_t^{-1} \tilde f_{h})\right).
\end{equation}
%Then the proposed corrected numerical scheme reads
This leads to the corrected numerical scheme
\begin{equation}\label{k6n}
U_h^n= v_h+G(\partial_\tau)\left(-g^{-1}(\partial_\tau)
\partial_\tau \partial_t^{-1}  A_h v_h +(1+a\partial_\tau^{\aone})^{-1}\left(\partial_t^{-1} f_h(0) + \partial_\tau^{-1} \tilde f_{h}\right)\right),
\end{equation}
where the symbol $\partial_\tau$ refers to time approximation based on the SBD method, and  
the exact contribution $\partial_t^{-1}$ is kept in the formula in order  to preserve the second-order time accuracy.
%, see Lemma ??.

For  numerical purposes, it is essential to rewrite \eqref{k6n} as a time stepping algorithm. 
Letting $1_\tau=(0,3/2,1,\cdots)$ so that $1_\tau=\partial_\tau\partial_t^{-1}1$ at grid point $t_n$, we
 apply the operator $(I+ g(\partial_\tau)^{-1} A_h)$ to both sides of \eqref{k6n} and use the associativity 
 of convolution to  finally arrive at  the equivalent form  
\begin{equation*}\label{k6n2}
(I+ g(\partial_\tau)^{-1} A_h)(U_h^n-v_h)=
-g(\partial_\tau)^{-1}  A_h  1_\tau v_h 
+(1+a\partial_\tau^{\aone})^{-1}\left(\partial_\tau^{-1} 1_\tau f_h(0) + \partial_\tau^{-1} \tilde f_{h}\right).
\end{equation*}
Applying the operator $\partial_\tau(1+a\partial_\tau^{\aone})$, we obtain 
%\begin{equation}
%((1+a\partial_\tau^{\aone})I+ \mu(\partial_\tau^{-1}+b\partial_\tau^{\atwo-1}) A_h)(U_h^n-v_h)=
%-\mu(\partial_\tau^{-1}+b\partial_\tau^{\atwo-1}) A_h  1_\tau v_h 
%+\partial_\tau^{-1} 1_\tau f_h(0) + \partial_\tau^{-1} \tilde f_{h}.
%\end{equation}
%Applying again the operator $\partial_\tau$, we obtain 
\begin{equation}\label{SBD}
((\partial_\tau+a\partial_\tau^{1+\aone})I+ \mu(1+b\partial_\tau^{\atwo}) A_h)(U_h^n-v_h)=
-\mu(1+b\partial_\tau^{\atwo}) A_h  1_\tau v_h 
+1_\tau f_h(0) +  \tilde f_{h}.
\end{equation}
Since $1v_h- 1_\tau v_h=(v_h,-1/2v_h,0,\cdots)$, we thus define the time stepping scheme as: with $U^0_h=v_h$, find $U_h^n$ such that 

$$\left(\frac{3}{2}\tau^{-1}+a\partial_\tau^{\aone+1}\right)\left(U_h^1-v_h\right)+\mu \left(1+b \tilde\partial_\tau^{\atwo} \right) A_h U_h^1+\frac{\mu}{2} A_h v_h=f^1_h+\frac{1}{2}f^0_h,$$
and for $n\geq 2$
$$
\partial_\tau U_h^n+a\partial_\tau^{\aone+1}( U_h^n-v_h)+\mu \left(1+ b \tilde\partial_\tau^{\atwo}\right) A_hU_h^n=f^n_h,
$$
where the modified convolution quadrature $\tilde\partial_\tau^{\atwo}$ is given by %\cite{LST-1996}
$$
\tilde\partial_\tau^{\atwo}\varphi^n=
\left( \sum_{j=1}^n \omega_{n-j}^{\atwo}\varphi^j+ \frac{1}{2}\omega_{n-1}^{\atwo}\varphi^0\right),
$$
with the weights $\{\omega_j^{\atwo}\}$ being generated by the SBD method.

Now, using Lemma \ref{lem:Lubich}, we derive the following error bounds for the homogeneous problem with smooth and nonsmooth initial data $v$.
\begin{lemma}\label{lem:SBD}
Let ${u}_h$ and $U^n_h$ be the solutions of problems $(\ref{FE-semi-P})$ and $(\ref{SBD})$, respectively, with 
$f=0$.  Then, the following estimates hold:

(a) If $v\in \dot{H}^2(\Omega)$ and $v_h =R_hv$, then
\begin{equation}\label{k10}
\|U^n_h-u_h(t_n)\|\leq C   \left(  \tau^2  t_n^{\aone-1}+\tau^2 t_n^{\aone-\atwo-1}\right) \|v\|_{\dot{H}^2(\Omega)}.
\end{equation}

(b) If $v\in L^2(\Omega)$ and $v_h =P_hv$, then
\begin{equation}\label{k9}
\|U^n_h-u_h(t_n)\|\leq C \tau^2 t_n^{-2}\|v\|.
\end{equation}
\end{lemma}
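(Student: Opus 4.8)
The plan is to mimic the argument of Lemma~\ref{lem:BE} for the BE method, but now track the effect of the SBD correction so that the time-discretization order becomes two. The starting point is the error representation obtained from subtracting \eqref{k6n} from \eqref{k6n-0}. Since $f=0$ and the $\partial_t^{-1}$ contributions drop, the error reduces to
\begin{equation*}
U^n_h-u_h(t_n)=\bigl(G(\partial_\tau)g^{-1}(\partial_\tau)\partial_\tau\partial_t^{-1}-G(\partial_t)g^{-1}(\partial_t)\bigr)A_h v_h.
\end{equation*}
Writing $G_1(z)=-g^{-1}(z)G(z)$ as in \eqref{s8}, and noting that $\partial_\tau\partial_t^{-1}$ corresponds to convolution with the sequence $1_\tau=(0,3/2,1,1,\dots)$, one should split the error into the ``uncorrected'' piece $\bigl(G_1(\partial_\tau)-G_1(\partial_t)\bigr)A_h v_h$, which would only be $O(\tau)$, plus a correction term coming from the difference between $1_\tau$ and the constant $1$, namely something of the form $G_1(\partial_\tau)\bigl((1_\tau-1)\ast(\cdot)\bigr)$. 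Because $1v_h-1_\tau v_h=(v_h,-\tfrac12 v_h,0,\dots)$ has only two nonzero entries (as already noted before \eqref{SBD}), this correction term is supported near $t=0$ and contributes the discrete analogue of a $\delta$-type kernel; estimating it using the bound $\|G_1(z)\|\le M_\theta|g(z)|^{-1}\le M_\theta\mu(|z|^{-1}+b|z|^{\beta-1})\min\{1,|z|^{-\alpha}/a\}$ together with a direct (non-quadrature) bound on the two surviving terms yields the correct $\tau^2$ order.

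For part~(a), with $v_h=R_hv$, the main estimate comes from applying Lemma~\ref{lem:Lubich} to the uncorrected difference $\bigl(G_1(\partial_\tau)-G_1(\partial_t)\bigr)A_h v_h$ with $\varphi$ constant, i.e. $\nu=1$, $p=2$, and $\mu=1+\alpha$ and $\mu=1+\alpha-\beta$ respectively for the two summands in the bound on $\|G_1(z)\|$. For a second-order method with $\nu=1<p=2$, Lemma~\ref{lem:Lubich} gives an error of order $t^{\mu-1}\tau^{\nu}=t^{\mu-1}\tau$ — which is still only first order — so the correction term is genuinely needed. The role of the correction is precisely to cancel the leading-order defect; after it is subtracted, the remainder is the convolution-quadrature error for a function behaving like $t^{1}$ near the origin (since $1_\tau-1$ kills the constant), giving $\nu=2=p$ and hence $t^{\mu-1}\tau^2$, i.e. $\tau^2(t_n^{\alpha-1}+bt_n^{\alpha-\beta-1})$. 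One then replaces $\|A_h v_h\|=\|A_h R_h v\|=\|P_h A v\|\le C\|v\|_{\dot H^2(\Omega)}$ by $L^2$-stability of $P_h$, exactly as in Lemma~\ref{lem:BE}, to conclude \eqref{k10}.

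For part~(b), with $v_h=P_hv$ and only $v\in L^2(\Omega)$, one cannot afford the factor $A_h v_h$, so instead one works directly with $\bigl(G(\partial_\tau)g^{-1}(\partial_\tau)\partial_\tau\partial_t^{-1}-G(\partial_t)g^{-1}(\partial_t)\bigr)$ applied to $v_h$, using $\|G(z)\|\le M_\theta$ from \eqref{s00} and the bound on $g^{-1}(z)$. The same correction-term decomposition applies; Lemma~\ref{lem:Lubich} is then invoked with $\mu=0$ and the corrected (second-order) contribution gives $t_n^{-2}\tau^2$, followed by $L^2$-stability of $P_h$ to pass from $\|v_h\|$ to $\|v\|$. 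The main obstacle, in both parts, will be the bookkeeping around the correction term: one must verify carefully that applying $(I+g(\partial_\tau)^{-1}A_h)$ and then $\partial_\tau(1+a\partial_\tau^\alpha)$ to \eqref{k6n} reproduces \eqref{SBD} — in particular that the surviving two-term sequence $(v_h,-\tfrac12 v_h,0,\dots)$ is exactly what the modified weights $\tilde\partial_\tau^\beta$ encode — so that the quantity being estimated really is the SBD error and the $\tau^2$ rate is not spoiled by a spurious $O(\tau)$ boundary term. Once the algebra of \eqref{k6n}–\eqref{SBD} is pinned down, the analytic estimates are a routine repetition of the contour/Lemma~\ref{lem:Lubich} arguments already used for the BE method.
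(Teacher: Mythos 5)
Your overall strategy is the paper's: represent the error by subtracting \eqref{k6n} from \eqref{k6n-0}, observe that the retained exact $\partial_t^{-1}$ turns the operand of the convolution quadrature into a function behaving like $t$ (so $\nu=2$ in Lemma \ref{lem:Lubich}), and finish with $A_hR_h=P_hA$ and the $L^2$-stability of $P_h$. However, the parameter bookkeeping as you have written it does not produce the claimed bounds. In part (a) you keep the symbol $G_1(z)=-g^{-1}(z)G(z)$ (decay $|z|^{-(1+\aone)}$, i.e.\ $\mu=1+\aone$ and $1+\aone-\atwo$) and then invoke the $\nu=2=p$ case; but $t^{\mu-1}\tau^2$ with $\mu=1+\aone$ is $\tau^2t_n^{\aone}$, not the $\tau^2t_n^{\aone-1}$ you assert. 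The two are reconciled only by absorbing the time-differentiation into the symbol: the exact representation is $U^n_h-u_h(t_n)=(G_1(\partial_\tau)-G_1(\partial_t))\partial_t^{-1}A_hv_h$ with $G_1(z)=-zg^{-1}(z)G(z)$, whose bound is $|z|^{-\aone}+b|z|^{\atwo-\aone}$, i.e.\ $\mu=\aone$ and $\aone-\atwo$, applied to $\varphi(t)=tA_hv_h$ with $\nu=p=2$; this gives \eqref{k10} in a single application of Lemma \ref{lem:Lubich}, with no need for your split into an ``uncorrected piece'' plus a discrete correction (which, as a discrete-only object, is not of the form $K(\partial_t)\varphi-K(\partial_\tau)\varphi$ and so cannot be fed to Lemma \ref{lem:Lubich} anyway).

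Part (b) has a more serious gap. You cannot ``work directly with $\bigl(G(\partial_\tau)g^{-1}(\partial_\tau)\partial_\tau\partial_t^{-1}-G(\partial_t)g^{-1}(\partial_t)\bigr)$ applied to $v_h$'': dropping the $A_h$ changes the quantity, and keeping it is not affordable since $\|A_hP_hv\|$ is not controlled by $\|v\|$ for $v\in L^2(\Omega)$. The necessary step is the identity \eqref{id-5}, $-g^{-1}(z)G(z)A_h=G(z)-I$, which eliminates both $g^{-1}$ and $A_h$ and yields $U^n_h-u_h(t_n)=(G_2(\partial_\tau)-G_2(\partial_t))\partial_t^{-1}v_h$ with $G_2(z)=z(G(z)-I)$. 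Since $\|G_2(z)\|\leq(1+M_\theta)|z|$, the correct parameters are $\mu=-1$, $\nu=p=2$, giving $t_n^{-1-1+2-2}\tau^2=t_n^{-2}\tau^2$; your stated $\mu=0$ gives only $t_n^{-1}\tau^2$ and does not match \eqref{k9}. Finally, your concern about re-deriving \eqref{SBD} from \eqref{k6n} is moot for the proof: the error analysis is carried out entirely on the representation \eqref{k6n} versus \eqref{k6n-0}, and the time-stepping form \eqref{SBD} is only needed for implementation.
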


\begin{proof} Note that, in view of \eqref{id-5}, we can split the error into  
\begin{equation}\label{k9b}
U^n_h-u_h(t_n)= \left(  G_1(\bar\partial_\tau)- G_1(\partial_t)\right) \partial_t^{-1}  A_hv_h,
\end{equation}
where  $G_1(z)=-  zg^{-1}(z) G(z)$. We easily verify that 
$$\| G_1(z)\|  \leq M_{{\theta}}\mu |z| \left(|z|^{-1}+b|z|^{\atwo-1}\right)\min\left\{1,\frac{|z|^{-\aone}}{a}\right\}.$$
Next, we apply Lemma \ref{lem:Lubich} to \eqref{k9b} with $\nu=2$, $p=2$ and  $\mu=\aone,\aone-\atwo$, respectively, 
and  the desired estimate \eqref{k10} follows then by using the identity $ A_hR_h= P_hA$.

For the estimate \eqref{k9}, we write the error as 
% by \eqref{k5-n1},
\begin{equation}\label{k9a}
U^n_h-u_h(t_n)= \left( G_2(\partial_\tau)- G_2(\partial_t)\right)\partial_t^{-1} v_h,
\end{equation}
where $G_2(z)=z(G(z)-I)$. 
%By using \eqref{res1}, we obtain 
By noting that $\|G_2(z)\|\leq (1+ M_{{\theta}})|z|\;  \forall z\in \Sigma_\theta$, a use of 
\eqref{k9a}, Lemma \ref{lem:Lubich} (with $\mu=-1$, $\nu=2$ and $p=2$) and the $L^2(\Omega)$-stability
of $P_h$ yields the estimate \eqref{k9}.
% which completes the proof.
\end{proof}

Recalling the estimates derived in Theorems \ref{nonsmooth-FE} and \ref{smooth-FE}, we summarize our results 
for the SBD method as follows.
%we derive the following error estimates for the fully discrete scheme \eqref{SBD}. 
%with smooth and nonsmooth initial data. 

\begin{theorem}\label{thm:SBD}
Let $u$ and $U^n_h$ be the solutions of problems $(\ref{main})$ and $(\ref{SBD})$, respectively, with  
$f=0$. Then, the following error estimates hold:

(a) If $v\in \dot{H}^2(\Omega)$ and $v_h =R_hv$, then
\begin{equation*}\label{k12}
\|U^n_h-u(t_n)\|\leq C (h^2+ \tau^2  t_n^{\aone-1}+ \tau^2  t_n^{\aone-\atwo-1})\|v\|_{\dot{H}^2(\Omega)}.
\end{equation*}

(b) If $v\in L^2(\Omega)$ and $v_h =P_hv$, then
\begin{equation*}\label{k14}
\|U^n_h-u(t_n)\|\leq C (h^2t^{\atwo-\aone-1}_n+\tau^2 t_n^{-2})\|v\|.
\end{equation*}

\end{theorem}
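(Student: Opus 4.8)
\textbf{Proof proposal for Theorem \ref{thm:SBD}.}
The plan is to combine the fully discrete-to-semidiscrete bound of Lemma \ref{lem:SBD} with the semidiscrete-to-continuous bounds of Theorems \ref{nonsmooth-FE} and \ref{smooth-FE} via the triangle inequality. Writing
\begin{equation*}
U^n_h - u(t_n) = \bigl(U^n_h - u_h(t_n)\bigr) + \bigl(u_h(t_n) - u(t_n)\bigr) = \bigl(U^n_h - u_h(t_n)\bigr) + e_h(t_n),
\end{equation*}
the first term is estimated by Lemma \ref{lem:SBD} and the second by the appropriate semidiscrete error estimate, with the caveat that the two parts must use the \emph{same} starting value $v_h$. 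This is exactly why the theorem splits into cases (a) and (b): in case (a) both ingredients are stated for $v_h = R_h v$ with $v \in \dot H^2(\Omega)$, and in case (b) both are stated for $v_h = P_h v$ with $v \in L^2(\Omega)$.

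For part (a), I would invoke \eqref{k10} to bound $\|U^n_h - u_h(t_n)\|$ by $C(\tau^2 t_n^{\aone-1} + \tau^2 t_n^{\aone-\atwo-1})\|v\|_{\dot H^2(\Omega)}$, and \eqref{02} from Theorem \ref{smooth-FE} to bound $\|e_h(t_n)\|$ by $ch^2\|v\|_{\dot H^2(\Omega)}$. Adding the two yields
\begin{equation*}
\|U^n_h - u(t_n)\| \leq C\bigl(h^2 + \tau^2 t_n^{\aone-1} + \tau^2 t_n^{\aone-\atwo-1}\bigr)\|v\|_{\dot H^2(\Omega)},
\end{equation*}
which is the claimed estimate. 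For part (b), I would use \eqref{k9}, giving $\|U^n_h - u_h(t_n)\| \leq C\tau^2 t_n^{-2}\|v\|$, together with \eqref{01} from Theorem \ref{nonsmooth-FE}, giving $\|e_h(t_n)\| \leq Ch^2 t_n^{\atwo-\aone-1}\|v\|$ (absorbing the harmless $h\|\nabla e_h\|$ term), and sum to obtain
\begin{equation*}
\|U^n_h - u(t_n)\| \leq C\bigl(h^2 t_n^{\atwo-\aone-1} + \tau^2 t_n^{-2}\bigr)\|v\|.
\end{equation*}

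Since all the heavy lifting — the Laplace-transform resolvent estimates, the convolution-quadrature error bounds of Lemma \ref{lem:Lubich}, and the operator estimates for $G_1$ and $G_2$ — has already been carried out in Lemma \ref{lem:SBD} and the semidiscrete theorems, there is no genuine obstacle here; the argument is a bookkeeping exercise in adding two already-established bounds. The only point requiring a moment's care is the consistency of the initial approximation $v_h$ across the two estimates, and the observation (used implicitly) that $\|e_h(t_n)\| \le \|e_h(t_n)\| + h\|\nabla e_h(t_n)\|$ so that the $L^2$ part of the semidiscrete bound may be quoted directly.
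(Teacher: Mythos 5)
Your proposal is correct and matches the paper's (implicit) argument exactly: the paper obtains Theorem \ref{thm:SBD} by combining Lemma \ref{lem:SBD} with Theorems \ref{nonsmooth-FE} and \ref{smooth-FE} through the triangle inequality, with the same pairing of $v_h=R_hv$ in case (a) and $v_h=P_hv$ in case (b). Nothing further is needed.
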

\begin{remark} Following the same analysis as in Remark \ref{rem:BE-2}, we obtain for the SBD method
%$
%\nabla(U^n_h-{u}_h(t_n))= \left( (\nabla G)(\partial_\tau)- (\nabla G)(\partial_t)\right)v_h.
%$
%Then, since 
%$$
%\|\nabla G(z)\| \leq   c |g(z)|^{1/2}\leq c |z|^{(1+\aone-\atwo)/2}, 
%$$
%an application of Lemma \ref{lem:Lubich} with $\nu=1$ and  $\mu=-(1+\aone-\atwo)/2$ yields
$$
\|\nabla(U^n_h-{u}_h(t_n))\|\leq c \tau^2 t_n^{(-3-\aone+\atwo)/2-1}\|v\|,
$$
and
$$
\|\nabla(U^n_h-{u}_h(t_n))\|\leq c \tau^2 t_n^{(-3+\aone-\atwo)/2}\|v\|_{\dot H^2(\Omega)}.
$$
Then, by interpolation, it follows that 
\begin{equation*}
\|\nabla(U^n_h-{u}_h(t_n))\|\leq c \tau^2 t_n^{-2}\|v\|_{\dot H^1(\Omega)}.
\end{equation*}
%Now, by using \eqref{s15-b}, the Sobolev inequality, and the estimate \eqref{FV-3-n}, we obtain the 
%$L^\infty$-error estimate:
%\begin{eqnarray}\label{FV-3-nn}
% \|\nabla(U^n_h-{u}_h(t_n))\|_{L^{\infty}(\Omega)} & \leq &
% cl_n^2 h^2 t_n^{-\atwo}\left(\|v\|_{\dot H^1(\Omega)}+\|v\|_{L^\infty(\Omega)} \right)\nonumber\\
% && + cl_n^{1/2} \tau t_n^{-\atwo/2}(t_n^{\atwo/2-2}+\gamma t_n^{\aone/2-2}) \|v\|_{\dot H^1(\Omega)}.
%\end{eqnarray}
\end{remark}
%***********************8

\begin{comment}
%*********************************************8

\begin{remark}
By following the arguments  in Remark \ref{rem:BE}, we can show that if $v\in \dot{H}^1(\Omega)$ and
$v_h =P_hv$ then the following estimate holds
\begin{equation*}\label{k13}
\|U^n_h-u(t_n)\|\leq  C (h^2t_n^{-\atwo/2}+\tau^2 t_n^{\atwo/2-2}+\gamma \tau^2 t_n^{\aone/2-2})|v|_1.
\end{equation*}
\end{remark}

\end{comment}
%-------------------------------------------------------------------------------
%---------------------------   inhomogeneous problem  --------------------------
%-------------------------------------------------------------------------------

For the inhomogeneous problem with $v=0$, we have the following  error estimates.

\begin{theorem}\label{thm:SBD-inh}
Let $u$ be the solution of the problem $(\ref{main})$  with $v=0$ and $f\in L^{\infty}(0,T;L^2(\Omega))$, and 
let $U^n_h$ be the solution $(\ref{SBD})$ with $v_h=0$. Then, the following error estimate holds:
\begin{equation*}%\label{s12-inh}
\begin{split}
\|U^n_h-u(t_n)\|\leq  c 
\Big( &h^2 t_n^{\atwo}\|f\|_{L^\infty(0,T;L^2(\Omega))}+ \tau^2t_n^{\aone-1} \|f(0)\| \\
 &\left.+\tau^2 t_n^{\aone}\|f'(0)\|+\tau^2\int_0^{t_n}(t_n-s)^{\aone}\|f''(s)\|ds\right).
\end{split}
\end{equation*}
\end{theorem}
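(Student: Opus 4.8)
The plan is to follow the proof of Theorem~\ref{thm:BE-n} step by step, replacing the backward Euler convolution quadrature with the second-order one. First I would split
\[
U^n_h-u(t_n)=\big(U^n_h-u_h(t_n)\big)+\big(u_h(t_n)-u(t_n)\big),
\]
and dispose of the spatial part at once: Theorem~\ref{nonsmooth-FE-n}(a) with $v=0$ and $p=\infty$ bounds $\|u_h(t_n)-u(t_n)\|$ by $ch^2 t_n^{\atwo}\|f\|_{L^\infty(0,T;L^2(\Omega))}$, which is the first term in the asserted estimate. It then remains to control the time-discretization error $U^n_h-u_h(t_n)$.

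Next I would set up its representation from \eqref{k6n-0} and \eqref{k6n} with $v_h=0$. Writing $f_h=f_h(0)+\tilde f_h$ with $\tilde f_h=f_h-f_h(0)$, and using $g(z)=\mu^{-1}z(1+az^{\aone})(1+bz^{\atwo})^{-1}$, one verifies the symbol identities $F_h(z)=G(z)(1+az^{\aone})^{-1}z^{-1}$ and $\Phi(z):=G(z)(1+az^{\aone})^{-1}=zF_h(z)$, together with the bounds $\|\Phi(z)\|\le c|z|^{-\aone}$ and $\|F_h(z)\|\le c|z|^{-1-\aone}$ on $\Sigma_\theta$, which follow from \eqref{s00} and the elementary estimate $|(1+az^{\aone})^{-1}|\le c|z|^{-\aone}$. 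Because the corrected scheme \eqref{k6n} retains the exact integral $\partial_t^{-1}f_h(0)$ while discretizing $\partial_t^{-1}\tilde f_h$ through $\partial_\tau^{-1}$, the associativity of convolution quadrature \eqref{s1} collapses the representation to
\[
U^n_h-u_h(t_n)=\big(\Phi(\partial_\tau)-\Phi(\partial_t)\big)\big(\partial_t^{-1}f_h(0)\big)+\big(F_h(\partial_\tau)-F_h(\partial_t)\big)\tilde f_h=:I+II,
\]
where $\partial_t^{-1}f_h(0)$ denotes the exact primitive, i.e., the function $s\mapsto s\,f_h(0)$.

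I would then estimate $I$ and $II$ with Lemma~\ref{lem:Lubich}. Since $\partial_t^{-1}f_h(0)$ has the form $c\,t^{\nu-1}$ with $\nu=2$, applying the lemma with $\mu=\aone$ and $p=2$ gives $\|I\|\le c\tau^2 t_n^{\aone-1}\|f_h(0)\|\le c\tau^2 t_n^{\aone-1}\|f(0)\|$ by the $L^2(\Omega)$-stability of $P_h$. For $II$, Taylor's theorem gives $\tilde f_h(t)=t f_h'(0)+\int_0^t(t-s)f_h''(s)\,ds=t f_h'(0)+(\kappa\ast f_h'')(t)$ with $\kappa(t)=t$; the linear part contributes $\le c\tau^2 t_n^{\aone}\|f'(0)\|$ via Lemma~\ref{lem:Lubich} ($\mu=1+\aone$, $\nu=2$, $p=2$), while for the remainder the second relation in \eqref{s1} transfers the operator onto $\kappa$, so that $\|((F_h(\partial_\tau)-F_h(\partial_t))\kappa)(\sigma)\|\le c\sigma^{\aone}\tau^2$, and convolving with $f_h''$ together with the stability of $P_h$ yields $\le c\tau^2\int_0^{t_n}(t_n-s)^{\aone}\|f''(s)\|\,ds$. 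Adding the contributions of $I$, $II$ and the spatial error gives the claimed bound.

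The only real work — and the point where care is needed — is getting the representation of $I+II$ exactly right in the presence of the correction hard-wired into \eqref{k6n}: keeping $\partial_t^{-1}f_h(0)$ exact is precisely what makes $I$ the action of $\Phi(z)=zF_h(z)$, whose $|z|^{-\aone}$ decay (one power less than $F_h$) produces the factor $t_n^{\aone-1}$ there, and recovering the full $\tau^2$ rate is what forces the Taylor expansion of $\tilde f_h$ to second order, hence the appearance of $f'(0)$ and $f''$ in the last two terms. Once the symbol identities and the accompanying resolvent bounds are checked and the convolution-quadrature algebra in \eqref{k6n} is tracked, the estimate follows from Lemma~\ref{lem:Lubich} and the stability of $P_h$ exactly as in Theorem~\ref{thm:BE-n}.
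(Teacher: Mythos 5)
Your proposal is correct and follows essentially the same route as the paper: the same splitting into spatial and temporal errors, the same representation of $U^n_h-u_h(t_n)$ via the symbols $F_h(z)$ and $zF_h(z)$ (your $\Phi$ is the paper's $\tilde F$), the same second-order Taylor expansion of $\tilde f_h$, and the same applications of Lemma \ref{lem:Lubich} with $\mu=\aone$ and $\mu=1+\aone$, $\nu=p=2$. The only difference is that you make explicit the symbol identities and resolvent bounds that the paper leaves implicit.
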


\begin{proof} By  Theorem \ref{nonsmooth-FE-n}, it suffices to bound $U^n_h-u_h(t_n)$. Let $\tilde F=zF(z)$. By using the expansion $\tilde f_h= t f_h'+t\ast  f_h''$ in \eqref{k6n-0} and  \eqref{k6n}, we rewrite the
solutions $u_h(t_n)$ and $U_h^n$ as 
\begin{eqnarray*}
  u_h(t_n) &= & \tilde F(\partial_t)t f_h(0)+ F(\partial_t)t f_h'(0) + ( F(\partial_t)t)\ast f_h'',\\
  U_h^n  & = & \tilde F(\partial_\tau)t f_h(0)+ F(\partial_\tau)t f_h'(0) + ( F(\partial_\tau)t)\ast f_h'',
\end{eqnarray*}
respectively. Then, we have
\begin{equation*}
\begin{split}
 U_h^n - u_h(t_n) = &\; (\tilde F(\partial_\tau)-\tilde F(\partial_t))t f_h(0)+( F(\partial_\tau)- F(\partial_t))t f_h'(0)\\
 & \;+ (( F(\partial_\tau)- F(\partial_t))t)\ast f_h''=:I+II.\\
\end{split}
\end{equation*}
For the first term, Lemma \ref{lem:Lubich} with $\mu=\aone$ and $\nu=2$ and the $L^2(\Omega)$-stability of
$ P_h$ give $\|I\|\leq c t_n^{\aone-1}\tau^2 \|f'(0)\|.$
For the second term,  Lemma \ref{lem:Lubich} with $\mu=\aone+1 $ and $\nu=2$ and the $L^2(\Omega)$-stability of
$ P_h$ yield 
\begin{eqnarray*}
\|II\|&\leq & \|( F(\partial_\tau)- F(\partial_t))tf_h'(0)\| + 
\int_0^{t_n} \|(( F(\partial_\tau)- F(\partial_t))t)(t_n-s) f_h''(s)\|ds \\
&\leq &
c\tau^2 \left( t_n^{\aone}\|f_h'(0)\|+\int_0^{t_n} (t_n-s)^{\aone}\|f_h''(s)\|ds\right),
\end{eqnarray*}
which completes the proof. \end{proof}

\section{ Numerical experiments}  \label{sec:NE}
\se

%In this section,  we illustrate the achieved theoretical convergence results on examples with smooth and %nonsmooth initial data the square domain $\Omega=(0,1)\times (0,1)$. 

In this section, we conduct numerical experiments to validate the convergence theory presented in Section \ref{sec:discrete}. We provide  two sets of  numerical examples with  zero and nonzero right-hand side data $f$,  defined on the square domain $\Omega=(0,1)^2$.  We examine separately the spatial and temporal convergence rates at a fixed final time $T=0.5$. In  our computation, we fix the parameters $\mu =a=b =1$.
For the homogeneous problem, we consider the following smooth and nonsmooth initial data:
\begin{itemize}
\item[ (a)]  $v(x,y)=xy(1-x)(1-y)\in \dot H^{2}(\Omega)$,  
%This example represents the smooth case.
%\item[ (b)]  $v(x,y)=g(x)g(y)$ where $g(z)=z$ on $[0,1/2)$ and $g(z)=1-z$ on $(1/2,1],$
%which is less smooth compared to the previous case. One can verify that   
% $v \in \dot H^{1+\epsilon}(\Omega)$ for $0\le \epsilon<1/2$.
\item[ (b)] $v(x,y)=\chi_{(0,1/2]\times(0,1)}(x,y)$, where $\chi_S$ is the characteristic function of the set $S$.
Here  $v \in \dot H^{\epsilon}(\Omega)$ for $0\le \epsilon<1/2$. 
\end{itemize}

The exact solution is difficult to obtain for these examples, so we compute a reference solution on a very refined mesh with $h=1/512$ and employ a time step size $\tau=1/500$.

To examine the temporal convergence rates of the proposed BE and SBD schemes, we employ a uniform 
 mesh in time with a time step size $\tau=T/N$, and  choose a sufficiently small  mesh size  $h = 1/512$ so that the error incurred by spatial discretization is negligible.
We measure the error $e^n := u(t_n)-U^n$ by the normalized $L^2(\Omega)$-norm 
$\| e^n\|/\|v\|$. The numerical results are presented in  Table \ref{table:1} for 
cases (a) and (b) with different values of $\alpha$ and $\beta$.
%when $\aone=0.25$, $\atwo=0.75$ and  $\aone=\atwo=0.5$. 

\begin{table}[h]
\begin{center}
\caption{$L^2$-error for cases (a) and (b) with $h=1/512$.}
\label{table:1}
\begin{tabular}{|c|r|cccc|cccc|}
\hline
 &\multicolumn{5}{|c|}{case (a)} & \multicolumn{4}{|c|}{case (b)}\\
\hline
 {$\aone,\, \atwo$} &
 $N$& BE & rate& SBD & rate & BE & rate& SBD & rate \\
\hline
  & 20 & 1.43e-3  &  & 7.69e-5&   & 8.93e-4&    & 4.83e-5 &  \\
 {$\aone=0.25$}& 40 &7.10e-4  & 1.01 & 1.85e-5  & 2.06 &  4.43e-4 &  1.01  &1.16e-5 & 2.06\\
{$\atwo=0.75$} & 80 & 3.54e-4 & 1.01 & 4.46e-6  &2.05  & 2.21e-4 &1.01  &2.80e-6& 2.05\\
& 160 &  1.77e-4 &  1.00 & 1.02e-6 &  2.13 & 1.10e-4 &  1.00  &6.40e-7 &  2.13 \\
 &  320 &  8.82e-5 &  1.00 & 1.67e-7 &  2.61 & 5.50e-5  & 1.00 & 1.05e-7  & 2.61  \\
 
 \hline
 &20 & 2.76e-4 &  & 4.37e-5&  & 1.58e-4&    & 2.51e-5 &   \\

{$\aone=0.5$} & 40 &9.58e-5 & 1.53 & 1.07e-5  &2.03 &  5.50e-5 &  1.53 &6.16e-6 & 2.03\\
{$\atwo=0.5$} & 80 & 3.91e-5 &1.29 & 2.50e-6  &2.10 & 2.24e-5&1.29&1.44e-6& 2.10\\
  & 160 & 1.76e-5   & 1.15 &   5.58e-7 &  2.16  & 1.01e-5 &  1.15&  3.21e-7 &  2.16 \\
  & 320 & 8.37e-6 &  1.07 &  9.03e-8 &  2.63 & 4.80e-6 &  1.07 & 5.18e-8 &  2.63\\
  
\hline
 &20 & 1.78e-2    &  & 8.60e-3          &  & 1.05e-2&    & 8.20e-3 &   \\

{$\aone=0.75$} & 40 & 1.12e-2  & 0.67    & 1.95e-3    &2.14& 7.25e-3    &  0.53 &1.63e-3    &2.34\\
{$\atwo=0.25$} & 80 & 6.26e-3    &0.84 &4.54e-4    &2.10 & 4.37e-3   &0.73   &3.57e-4   & 2.19\\
  & 160 & 3.31e-3      & 0.92   &   1.02e-4    &  2.15& 2.41e-3    &  0.86   &  7.82e-5    &  2.19 \\
  & 320 & 1.70e-3    &  0.96    &  1.66e-5    &  2.62 & 1.26e-3    &  0.93    & 1.26e-5    & 2.63\\
  
\hline 

\end{tabular}
\end{center}
\end{table}

In the table, the {\tt rate} refers to the empirical convergence
rate, when the time step size $\tau$ halves. The numerical results show convergence rates of order $O(\tau)$ and $O(\tau^2)$  for the BE  and  SBD schemes, respectively. We also observe that both schemes exhibit a steady 
convergence for both smooth and nonsmooth data, which  confirms the theoretical convergence rates.
%our convergence theory. 
%Additional  numerical experiments with different values of fractional orders $\aone$ and $\atwo$ have shown 
%similar  convergence rates and  a very steady behaviour of the schemes for both smooth and nonsmooth data, verifying their robustness. 
%It was, in particular, observed that the error decreases as the fractional order $\alpha$ increases. 
%More details on the behaviour of errors from BE and SBD methods combined with a Galerkin 
%FE  discretization in space can be found in \cite{JLZ2016}.

%************************************

\begin{table}[t]
\begin{center} 
\caption{$L^2$-error for cases (a) and (b) with $\aone=0.25$, $\atwo=0.75$ as $t_N\to 0$, $N=10$.}
\label{table:T1}
\begin{tabular}{|c|c|cccccl|}
\hline
case& meth. &  1e-3 & 1e-4 & 1e-5 & 1e-6 & 1e-7 & rate\\
%\hline
%&(a) & BE  &6.32e-3  &2.76e-3    &9.37e-4    & 3.02e-4   & 9.64e-5   & 0.49 $(\alpha_1)$\\
%$\alpha_1=0.5$&    & SBD & 4.70e-4 & 1.63e-4 & 5.22e-5 & 1.66e-5 & 5.27e-6 & 0.50 $(\alpha_1)$\\ 
%\cline{2-9}
%$\alpha_2=0.5$&(b) & BE  &5.90e-3 & 4.65e-3   &3.33e-3    &2.50e-3    &1.87e-3& 0.13 $(\alpha_1/4)$ \\
%&   & SBD & 5.50e-4 & 3.85e-4 & 2.86e-4 & 2.14e-4 & 1.61e-4 & 0.13 $(\alpha_1/4)$\\
\hline
%&
(a) & BE  & 5.37e-3   & 2.41e-3     & 8.54e-4  &  2.82e-4    & 9.09e-5     & 0.49  $(0.50)$ \\
%$\alpha_1=0.25$& 
   & SBD & 3.99e-4 & 1.47e-4  & 4.85e-5  & 1.57e-5  & 5.01e-6  & 0.49  $(0.50)$\\ 
 %\cline{2-9}
 \hline
%$\alpha_2=0.75$&
(b) & BE  & 5.38e-3   &  4.32e-3    & 3.21e-3     & 2.45e-3     &  1.85e-3    & 0.12 $(0.125)$ \\
%&  
& SBD &  4.99e-4    &  3.63e-4  & 2.78e-4   &  2.11e-4  &  1.59e-4   & 0.12 $(0.125) $\\
\hline
\end{tabular}
\end{center}
\end{table}

%**********************************

To study the temporal error more closely, we investigate the prefactors in Theorems \ref{thm:BE} and \ref{thm:SBD}.
By neglecting the spatial error (i.e., dropping the $O(h^2)$ term) and  taking the number of time steps $N$ as  fixed, the error bounds in  Theorems \ref{thm:BE} show as $t_N\to 0$,
\begin{equation*}
\|U_h^N-u(t_N)\|\leq c  t_N^{\left(\aone-\atwo+1\right)} N^{-1}\|v\|_{\dot H^2(\Omega)},
\end{equation*}
and
\begin{equation*}
\|U_h^N-u(t_N)\|\leq c N^{-1}\|v\|.
\end{equation*}
By interpolating these results, we obtain for $q\in [0,2]$ 
\begin{equation}\label{discrete-1}
\|U_h^N-u(t_N)\|\leq c  t_N^{\left(\aone-\atwo+1\right)q/2} N^{-1}\|v\|_{\dot H^q(\Omega)}.
\end{equation}
Similarly, for the SBD scheme, there holds from Theorem \ref{thm:SBD} as $t_N\to 0$,
\begin{equation}\label{discrete-2}
\|U_h^N-u(t_N)\|\leq c  t_N^{\left(\aone-\atwo+1\right)q/2}N^{-2}\|v\|_{\dot H^q(\Omega)}.
\end{equation}

%\textcolor{red}{
For fixed $N=10$ and $h=1/512$, we present the computed normalized $L^2$-norm of the error  in Table \ref{table:T1} for cases (a) and (b) as $t_N\to 0$. The predicted rate with respect to $t_N$  computed from 
\eqref{discrete-1} and \eqref{discrete-2} is given between brackets. 
The temporal error should theoretically behaves like $O(t_N^{\left(\aone-\atwo+1\right)q/2})$ for both  the BE and SBD schemes.
%For $\aone=0.25$ and $\atwo=0.75$, 
From the table, we notice 
that the error decreases like $O(t_N^{1/2})$ in the smooth case (a), whereas  it decreases  like $O(t_N^{1/8})$ in the nonsmooth case (b). The results agree well with the convergence theory.  

 %In case (b), the initial data $v\in \dot H^{\epsilon}(\Omega)$ for $0\leq \epsilon<1/2$, and formula \eqref{discrete-1} predicts an error decay rate $O(t_N^{\alpha_1/4})=O(t_N^{1/8})$.
% Hence, the
%empirical convergence  rates agree well with the theoretical predictions.
%
%The table shows similar results for $\alpha_1=0.25$ and $\alpha_2=0.75$. From the table, the error decays like $O(t_N^{1/4})$ in the smooth case (a), and like $O(t_N^{0.18})$ in the nonsmooth case (b). Again, in case (a), 
%the numerical results agree with the convergence theory. However, in case (b), we observe a 
% convergence rate  close to $O(t_N^{\alpha_2/4})$ rather than $O(t_N^{\alpha_1/4})$ which does not match with our predictions. This case deserves further investigation.

%Again, for case (b), 
%formula \eqref{discrete-1} predicts an error decay rate $O(t_N^{\alpha_1/4})=O(t_N^{1/8})$, which agree well with the %the theoretical predictions, thereby  confirm the convergence results in Theorems \ref{thm:BE} and  \ref{thm:SBD}.
 
%************************************

Now, we investigate  the spatial discretization error. To do so, we fix the time step size $\tau=1/500$ and perform the computations using the SBD scheme so that the temporal discretization error is negligible.
In Table \ref{table:2}, we list the normalized $L^2(\Omega)$-norm and the $L^\infty(\Omega)$-norm of the error  for 
 cases (a) and (b). We observe a convergence rate $O(h^2)$ for the $L^2(\Omega)$-norm of the error for smooth and nonsmmoth initial data,  which  confirm the predicted rates. The results also show the validity  of the  convergence rates in the $L^\infty(\Omega)$-norm (ignoring a logarithmic factor).

\begin{table}[t]
\begin{center}
\caption{Error for cases (a) and (b) with $\tau=1/500$.}
\label{table:2}
\begin{tabular}{|c|r|cccc| cccc|}
\hline
&\multicolumn{5}{|c|}{case (a)} & \multicolumn{4}{|c|}{case (b)} \\
\hline
$\aone,\, \atwo$ &$M$& $L^2$-error & rate & $L^\infty$-error & rate & $L^2$-error & rate & $L^\infty$-error & rate\\
\hline
&  8	&2.50e-3 &		    & 1.78e-4       &	     &  1.35e-3  & &  3.92e-3  &\\
$\aone=0.25$&16	& 6.44e-4 &	1.96    & 4.63e-5 &  1.95& 3.47e-4  & 1.96&  1.27e-3  & 1.62\\
$\atwo=0.75$&32	& 1.62e-4 & 1.99	&1.17e-5  & 1.99 & 8.75e-5 &  1.99&  3.91e-4  & 1.70\\
&64	& 4.02e-5&	2.01	&  2.90e-6 &  2.01& 2.18e-5  & 2.00&  1.16e-4 &  1.75\\
&128	&9.67e-6 &	2.06& 7.08e-7  & 2.04& 5.35e-6 &  2.03&  3.34e-5 &  1.79\\
\hline
&     8	      & 1.73e-5 &  &1.16e-6 &  &9.45e-6 & &1.34e-5 &\\
$\aone=0.5$&16	     & 4.99e-6 &  1.80&  3.40e-7  & 1.77 &2.70e-6 & 1.81& 3.90e-6 &  1.78\\
$\atwo=0.5$&32	& 1.29e-6  & 1.95&  8.85e-8 &  1.94 & 6.95e-7 &  1.96& 1.01e-6 &  1.95\\
&64	& 3.22e-7 &  2.00 & 2.21e-8 &  2.00 &1.73e-7& 2.00& 2.53e-7 &  2.00\\
&128	&7.69e-8  & 2.07&5.31e-9&   2.06&4.13e-8& 2.07& 6.07e-8&  2.06\\
\hline
&     8	      & 1.61e-2  &  &1.10e-3 &    &9.59e-3 & & 1.63e-2 &\\
$\aone=0.75$&16	     & 4.14e-3 &  1.96 &  2.88e-4  & 1.93 &2.51e-3  & 1.93 & 4.31e-3  & 1.92\\
$\atwo=0.25$&32	& 1.04e-3  & 1.99  & 7.28e-5 &  1.99& 6.33e-4  & 1.99&  1.08e-3  & 1.99\\
&64	& 2.57e-4 &  2.01 &  1.81e-5  & 2.01 &1.57e-4  & 2.01  & 2.69e-4 &  2.01\\
&128	&6.13e-5  & 2.07 &  4.35e-6 &  2.06 & 3.74e-5  & 2.07  & 6.44e-5 &  2.06\\
\hline
\end{tabular}
\end{center}
\end{table}

%As the fractional order $\atwo$ approaches zero and $\aone$ increases from $0$ to $1$, equation (\ref{main}) changes from a diffusion equation to a wave equation. Numerical experiments show this transition as can be seen  in Figure \ref{Fig:diff-wave}: for $\aone$ close to zero the solution is diffusive and very smooth, whereas for $\aone$ approaches 1 the solution has small oscillations.

%\begin{figure}[t]
%\begin{center} \label{Fig:diff-wave}
% \caption{Numerical solution to case (b) computed by SBD at t=0.5  }  
%   \includegraphics[width=6cm, height=6cm]{p4al=09bt=01t=05.jpg}
%   $\;$
%   \includegraphics[width=6cm, height=6cm]{p4al=09bt=01t=05.jpg}
%\end{center}
%\end{figure}

%###########################################
\begin{figure}[t!]
   \centering 
  
   \subfigure[t=0]
    {
       \includegraphics[width=6cm]{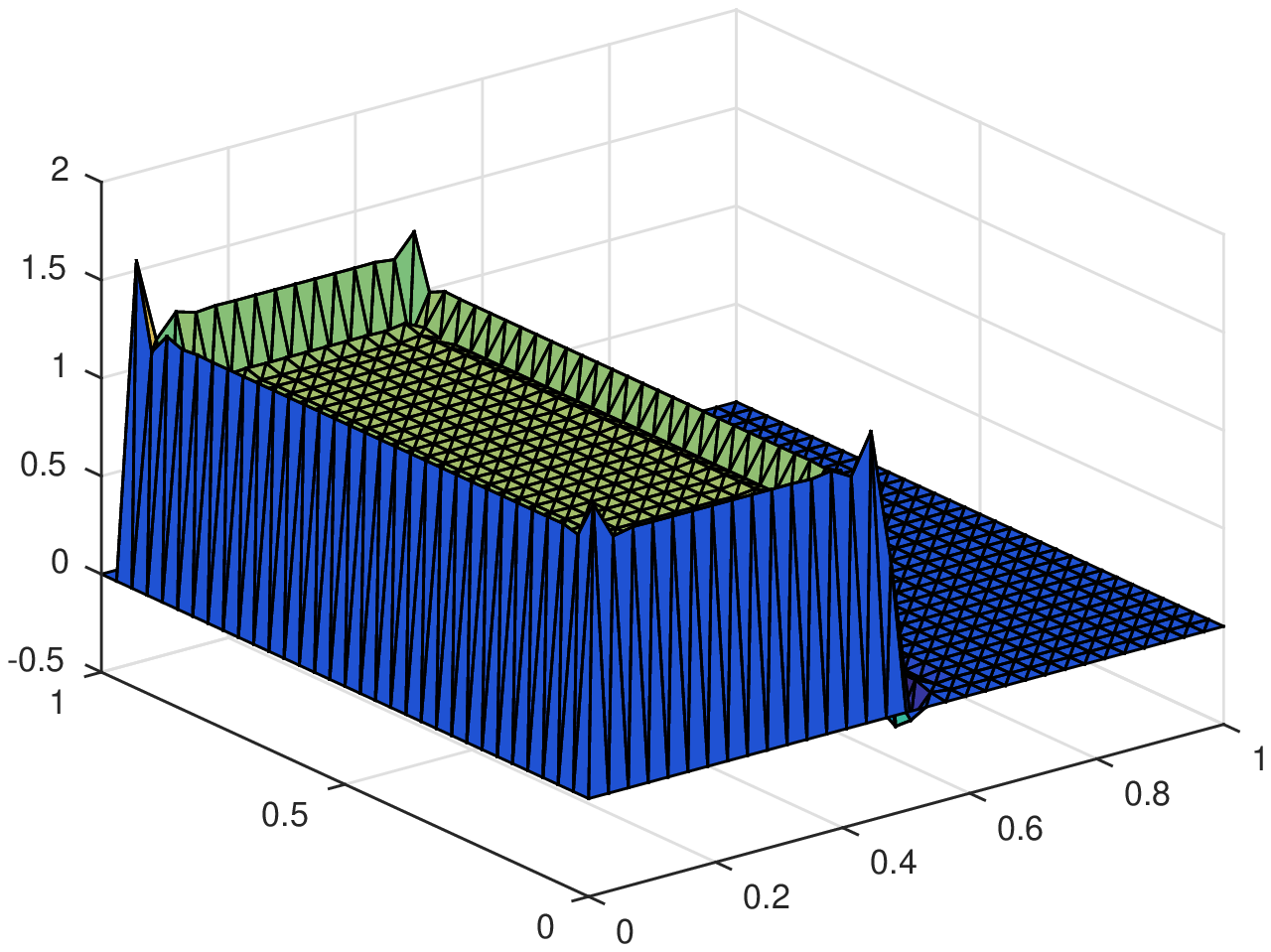}
       % \label{fig:first_sub}
    }
   \subfigure[t=0.1]
   {
       \includegraphics[width=6cm]{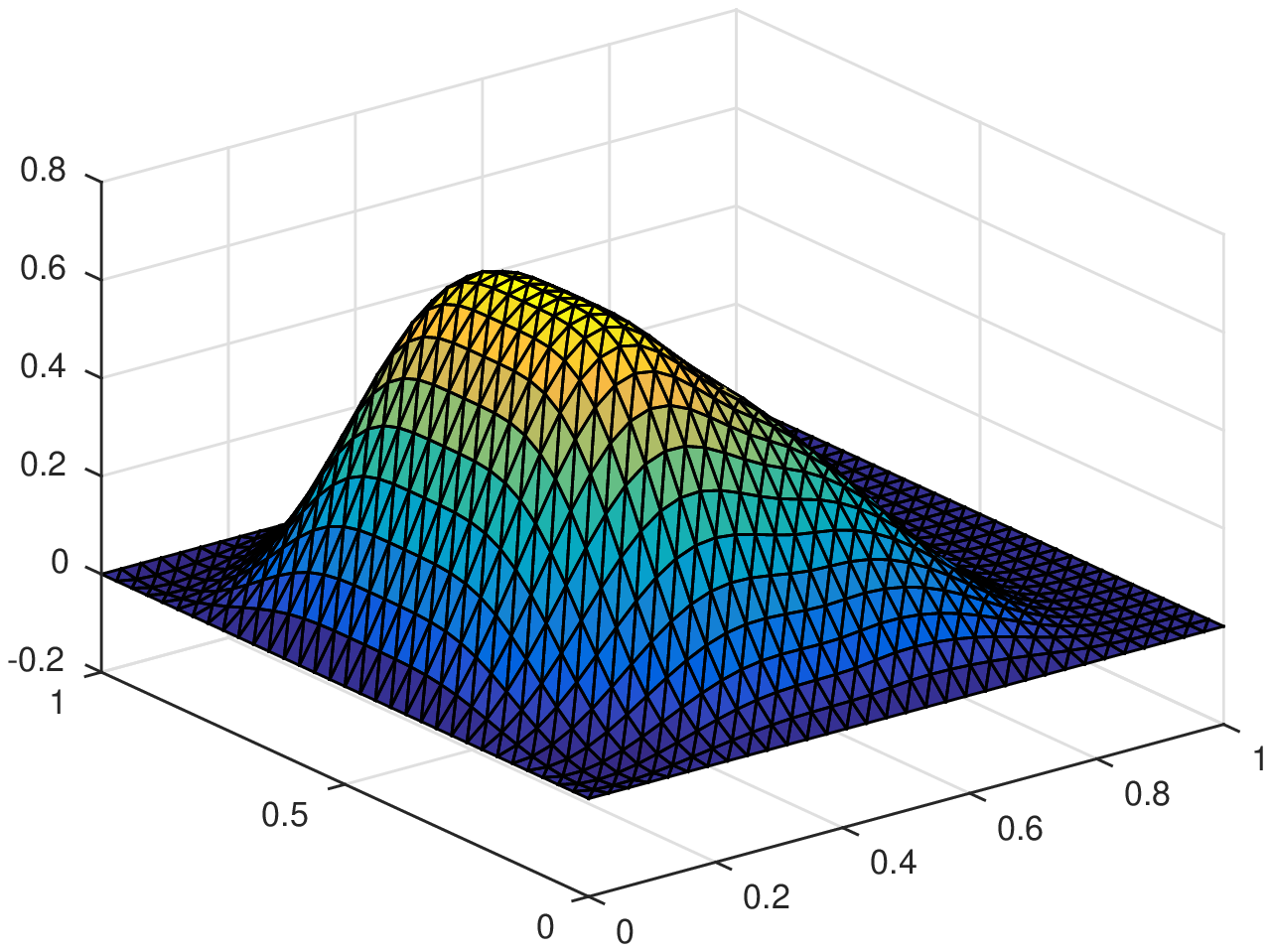}
     % \includegraphics[width=6cm]{p4,b=0,t=01,al=05.eps}
       % \label{fig:second_sub}
   }
    \subfigure[t=0.25]
    {
       \includegraphics[width=6cm]{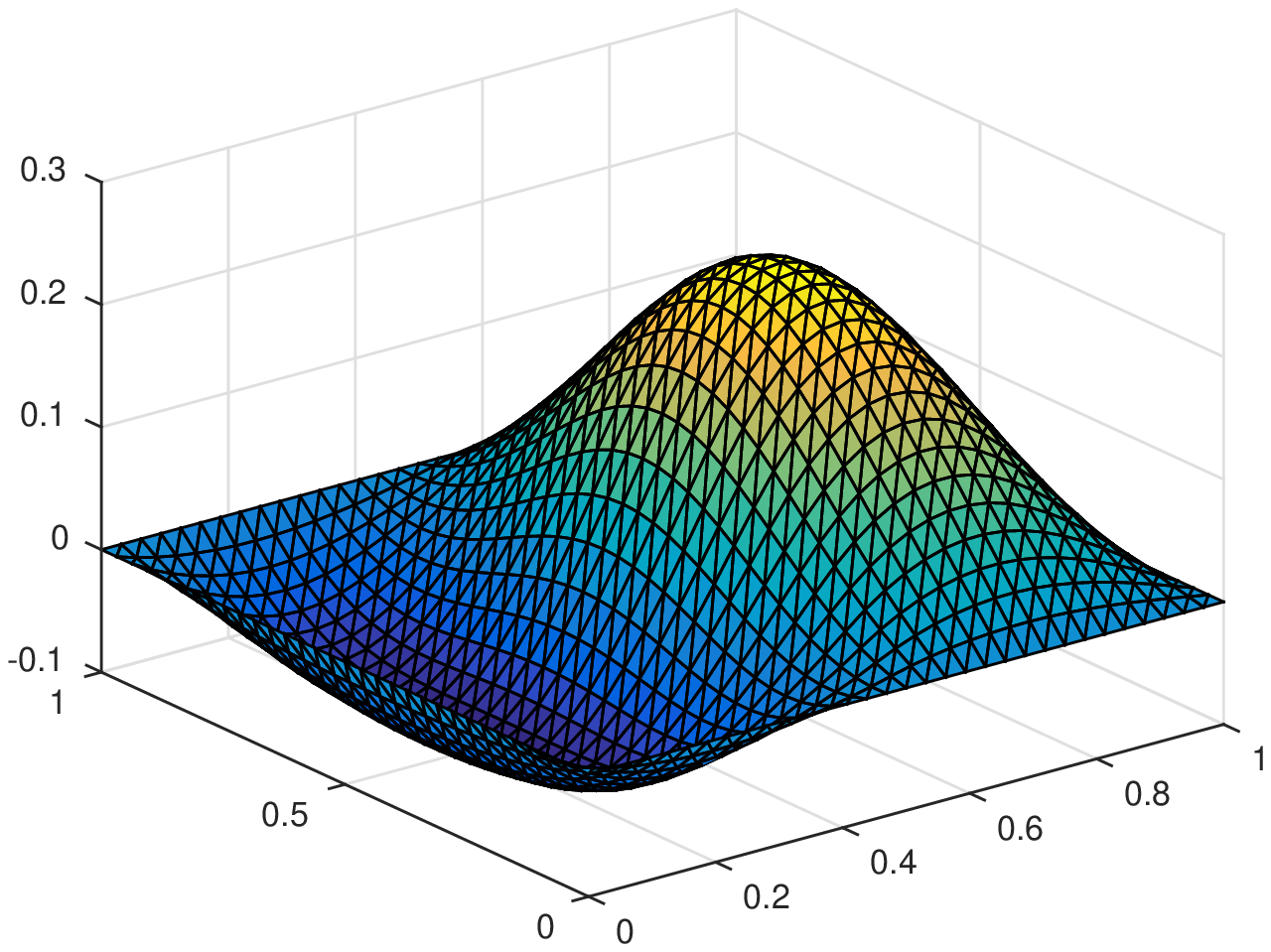}
       %\includegraphics[width=6cm]{p4,b=0,t=025,al=05.eps}
       % \label{fig:first_sub}
    }
    \subfigure[t=0.3]
    {
        \includegraphics[width=6cm]{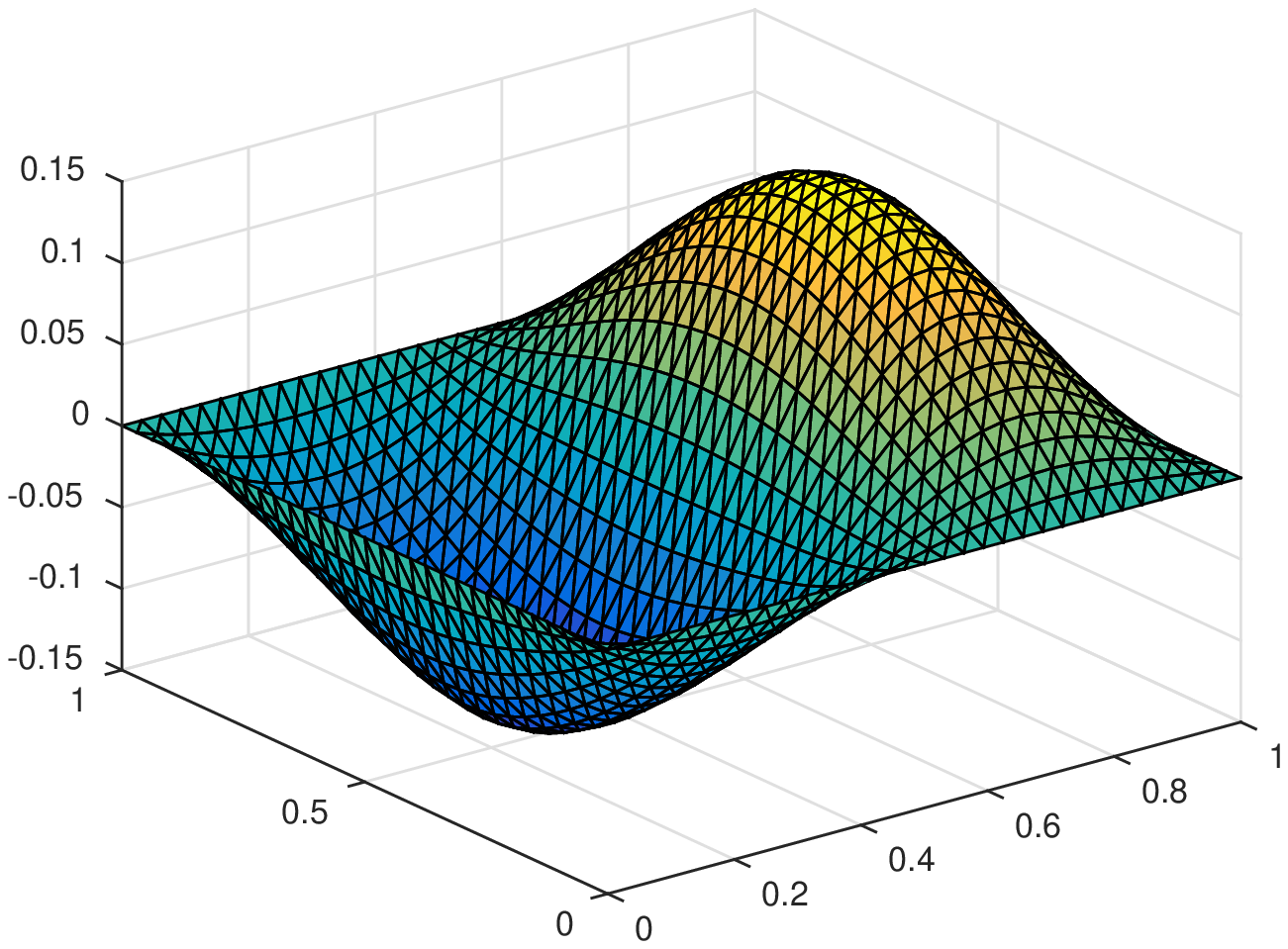}
       %\includegraphics[width=6cm]{p4,b=0,t=03,al05.eps}
       % \label{fig:second_sub}
    }
   
     \caption{Numerical solution for case (b) with $b=0$ and $\aone=0.5$, computed using the SBD scheme with 
     $h=1/32$.}
%     \caption{Numerical solution to case (b) when  $b=0$ and $\aone=0.5$, computed using the SBD scheme with 
 %    $h=55$ and $N=0$.}
     \label{Fig:al=0.5,b=0}
     \end{figure}

%########################################### 
\begin{figure}[t!]
   \centering

    \subfigure[$\aone=0.25$]
    {
       \includegraphics[width=6cm]{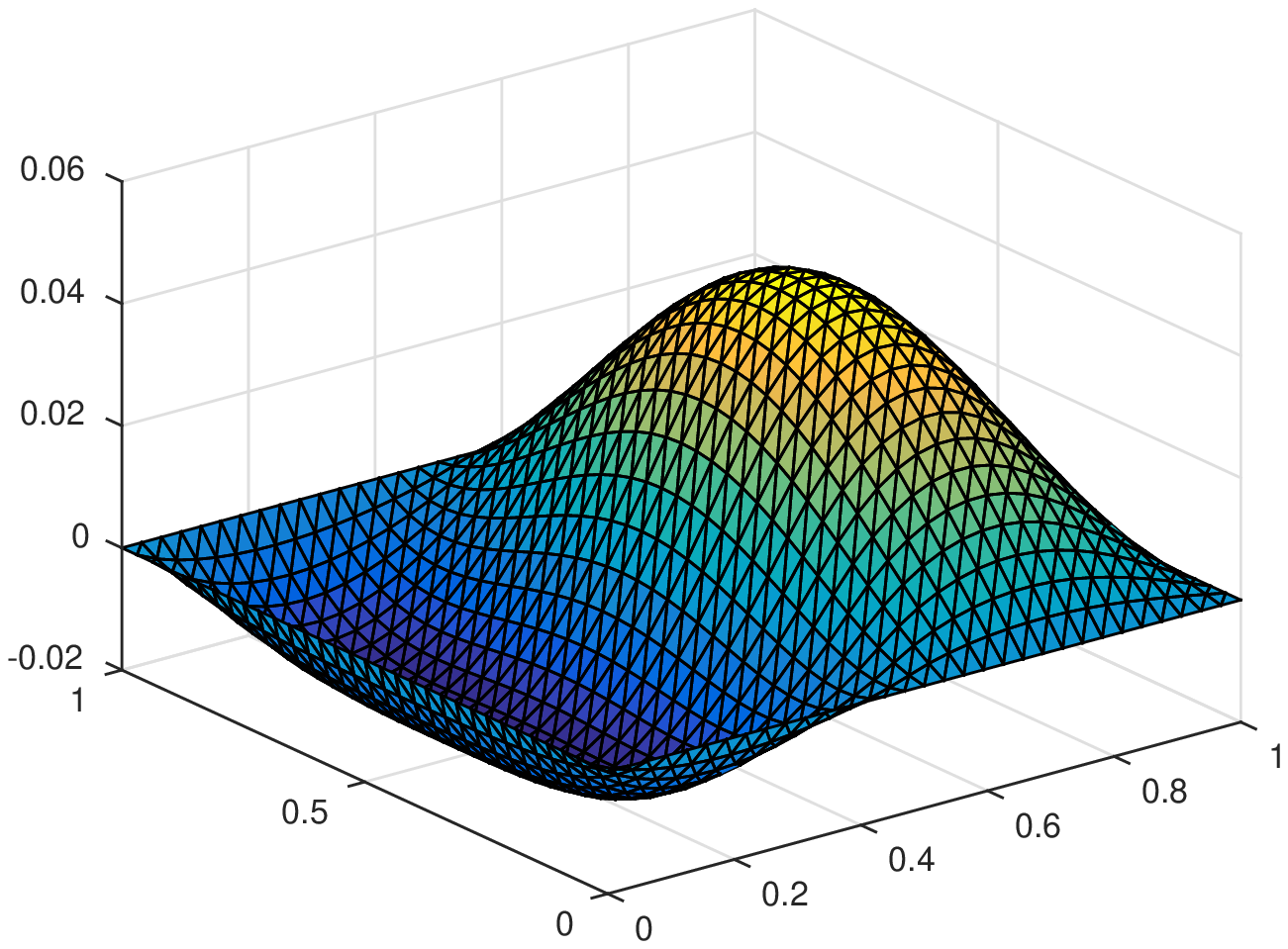}
       %\includegraphics[width=6cm]{p4,b=0,t=03,al=025.eps}
       % \label{fig:first_sub}
    }
    \subfigure[$\aone=0.75$]
    {
       \includegraphics[width=6cm]{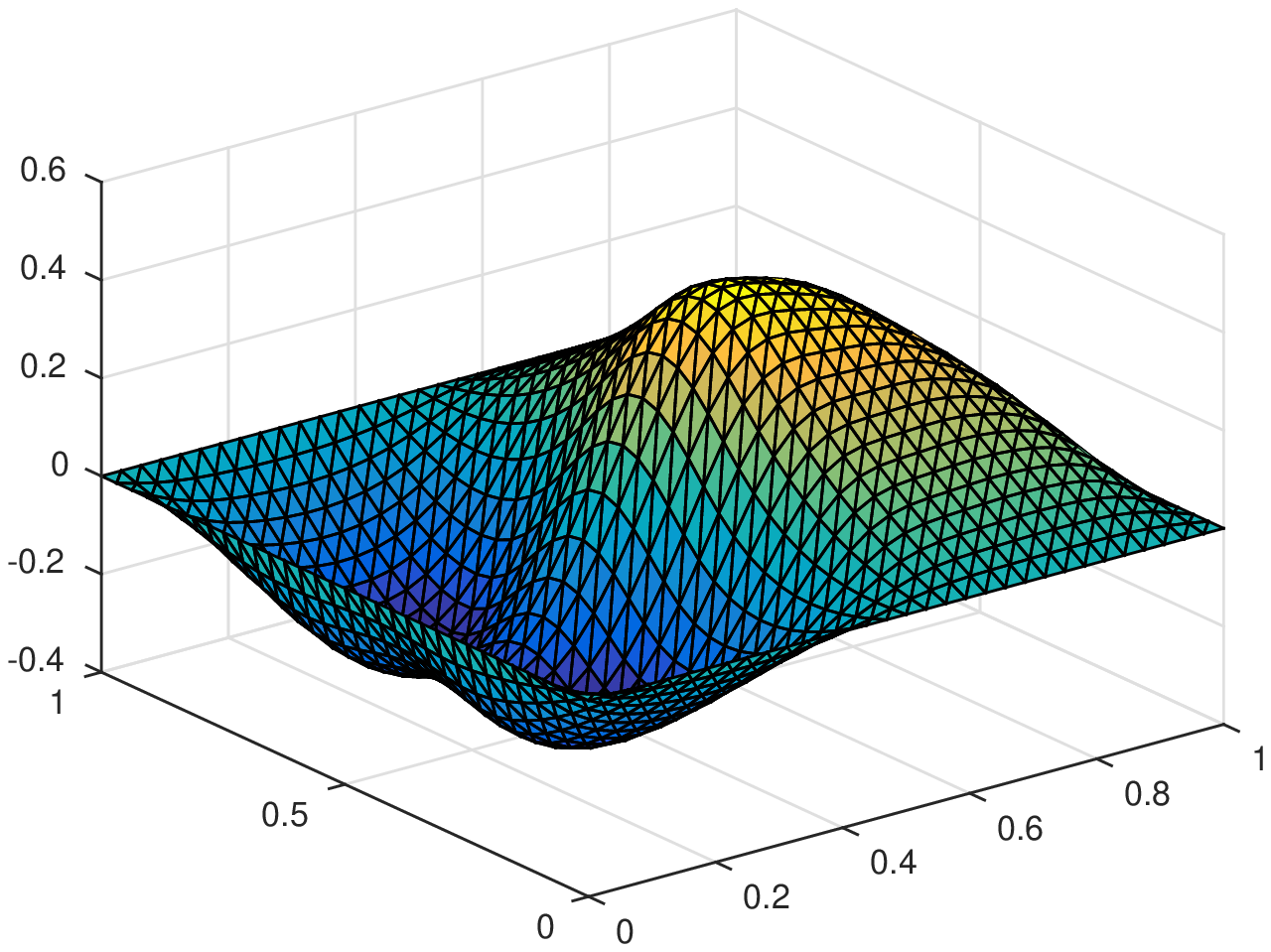}
       %\includegraphics[width=6cm]{p4,b=0,t=03,al=075.eps}
       % \label{fig:second_sub}
    }
   
     \caption{Numerical solution for case (b) at $t=0.3$, with  $b=0$ and  different values of $\alpha$,
      computed using the SBD scheme with $h=1/32$.}
     \label{Fig:b=0}
     \end{figure}
     
%###########################################
\begin{figure}[t]
   \centering 
  
    \subfigure[t=0.15]
    {
       \includegraphics[width=6cm]{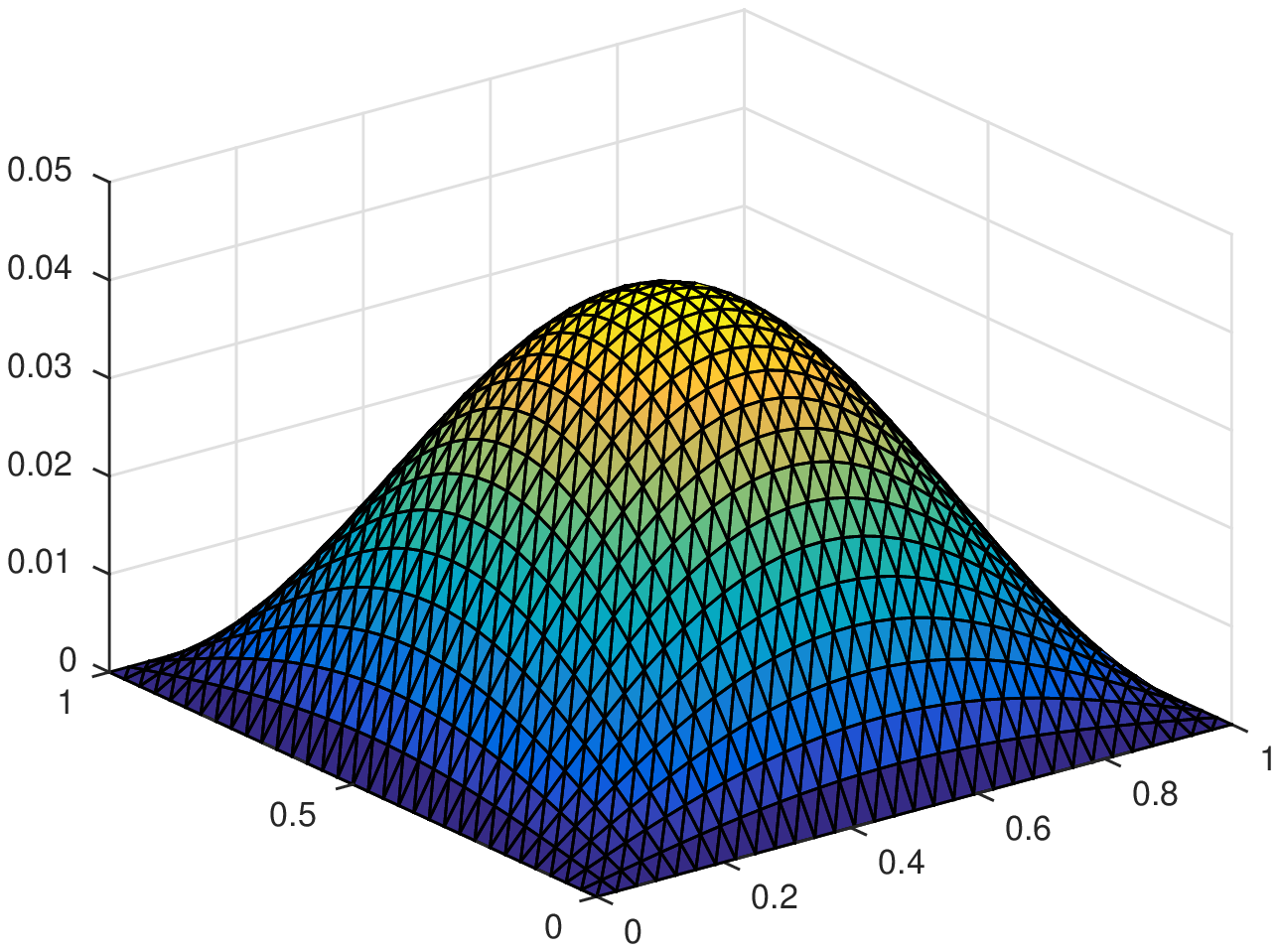} 
       %\includegraphics[width=6cm]{p4,b=1,t=015,al=bt=05.eps}
       % \label{fig:first_sub}
    }
    \subfigure[t=0.3]
    {
       \includegraphics[width=6cm]{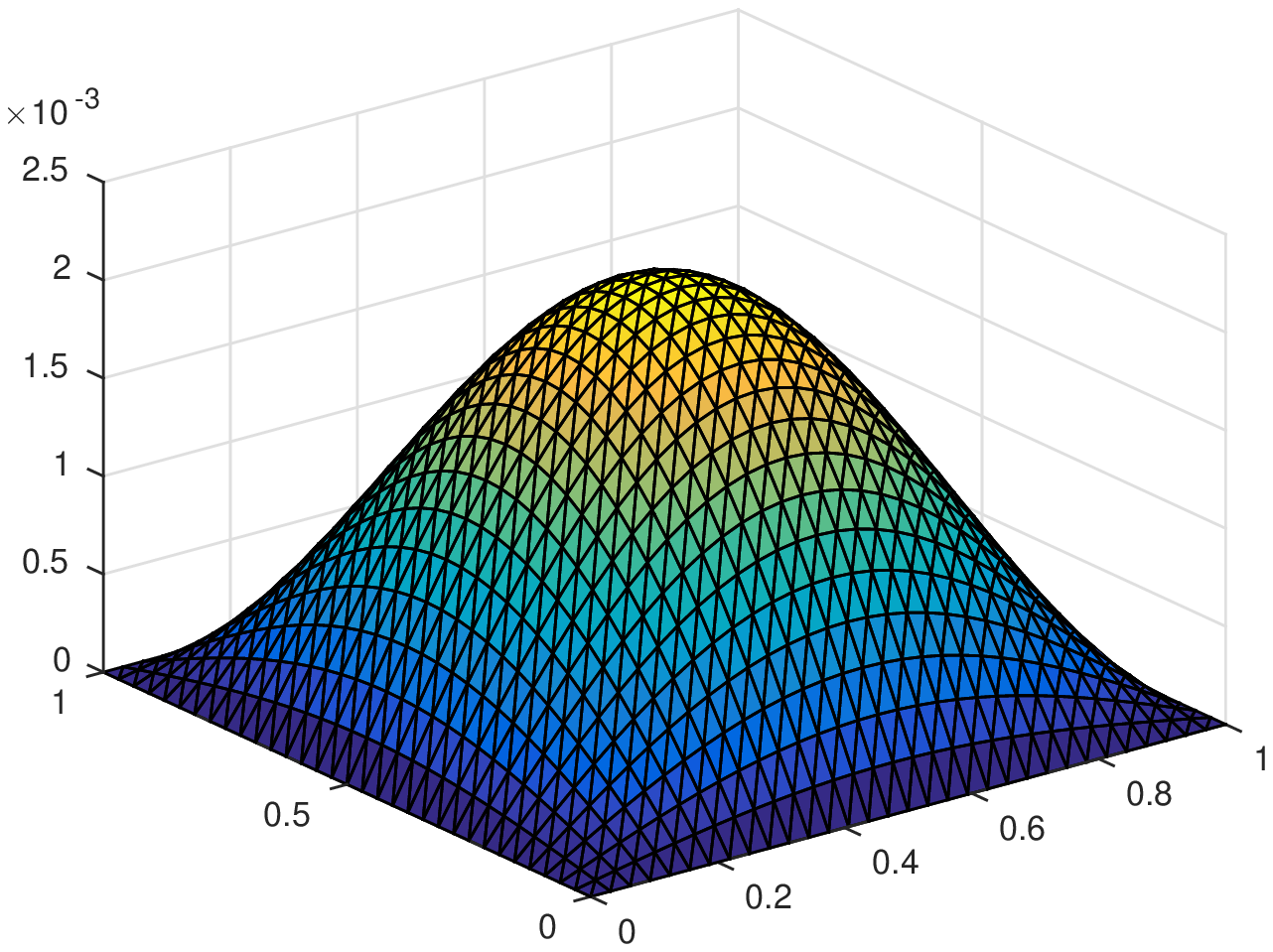}
       %\includegraphics[width=6cm]{p4,b=1,t=03,al=bt=05.eps}
       % \label{fig:second_sub}
    }

     \caption{Numerical solution for case (b) with $\aone=\atwo=0.5$ and $b=1$,  
     computed using the SBD scheme with $h=1/32$.} 
     \label{Fig:al=0.5,b=1}
     \end{figure}
     %\clearpage

%#######################################################

 By neglecting the temporal error and fixing $h$, we investigate the spatial prefactors in Theorems \ref{thm:BE} and  \ref{thm:SBD}. 
In Table \ref{table:prefactors}, we report the numerical results  obtained as $t\to 0$ with $h$ being constant.   
The results indicate  that the spatial error essentially stays unchanged in the smooth case, whereas it deteriorates as $t\to 0$ in the nonsmooth case. Since the initial data in case (b) belongs to $\dot H^{\epsilon}(\Omega)$ for $0\leq \epsilon<1/2$, it is expected that the error  grows like $O(t_N^{\left(\atwo-\aone-1\right)(2-q)/2})$. Hence, the empirical convergence rate in Table \ref{table:prefactors} agrees  with the theoretical one, i.e., $3\left(\atwo-\aone-1\right)/4=-0.375$ for $\aone=0.25$ and $\atwo=0.75$.
%}

\begin{table}[h]
\begin{center} 
\caption{$L^2$-error for cases (a) and (b)  with  $\aone=0.25$, $\atwo=0.75$:  $t\to 0$,  $h=1/64$, $N=500$.}
\label{table:prefactors}
\begin{tabular}{|c|c|cccccc|}
\hline
 & method &  1e-3 & 1e-4 & 1e-5 & 1e-6 & 1e-7 & rate\\
\hline
(a) & BE  & 3.46e-4 & 4.41e-4 & 4.86e-4 & 5.04e-4 & 5.12e-4 & -0.01 $(0)$\\ 
    & SBD & 4.37e-4 & 4.94e-4 & 5.08e-4 & 5.12e-4 & 5.15e-4 & -0.00 $(0)$\\ 
  \hline
(b) & BE  & 3.55e-4 & 7.88e-4 & 1.77e-3 & 4.00e-3 & 9.12e-3 & -0.36 $(-0.375)$\\ 
  & SBD &  4.05e-4 & 8.28e-4 & 1.80e-3 & 4.02e-3 & 9.14e-3 & -0.35 $(-0.375)$\\ 
\hline
\end{tabular}
\end{center}
\end{table}
%

%****************************************
\begin{comment}
\begin{table}[t]
\begin{center}
\caption{Error for case (b) on nonsymmetric meshes, $\alpha_1=0.25$, $\alpha_2=0.75$, $\tau=1/500$.}
\label{table:Nonsym-1}
\begin{tabular}{|r|cccc| cccc|}
\hline
$M$& $L^2$-error & rate & $L^\infty$-error & rate \\
\hline
 8    & 1.20e-3 &          & 4.27e-3 &	\\
16  & 3.22e-4 &	1.90  & 1.41e-3 & 1.60\\
32   & 8.41e-5 &	1.94  & 4.35e-4 & 1.70\\
64  & 2.14e-5 &    1.97  & 1.25e-4 & 1.79\\
128 & 5.23e-6 &	2.03  & 3.38e-5 & 1.89\\
\hline
\end{tabular}
\end{center}
\end{table}

\end{comment}

\begin{table}[h!]
\begin{center}
\caption{Error for cases (c) and (d) with $\aone=0.25$, $\atwo=0.75$ and $\tau=1/500$.}
\label{table:Non-homogeneous}
\begin{tabular}{|r|cccc|cccc|}
\hline
\multicolumn{5}{|c|}{case (c)} & \multicolumn{4}{|c|}{case (d)} \\
\hline
$M$& $L^2$-error & rate & $L^\infty$-error &  rate & $L^2$-error & rate & $L^\infty$-error & rate\\
\hline
  8    & 3.00e-2 &      & 6.72e-2 &      &  9.43e-4  &       &  4.29e-3 &   \\
  16   & 8.47e-3 & 1.82 & 1.94e-2 & 1.79 & 2.43e-4   &  1.96 &  1.41e-3 &  1.60\\
  32   & 2.18e-3 & 1.96 & 5.02e-3 &	1.95 & 6.13e-5   &  1.99 &  4.38e-4 &  1.69\\
  64   & 5.43e-4 & 2.00 & 1.27e-3 &	1.99 & 1.53e-5   &  2.00 &  1.31e-4 &  1.74\\
  128  & 1.29e-4 & 2.07	& 3.17e-4 &	2.00 & 3.77e-6   &  2.02 &  3.80e-5 &  1.79\\
\hline
\end{tabular}
\end{center}
\end{table}

In Figure \ref{Fig:al=0.5,b=0}, we display the profile of the numerical solution in case (b) when $a=\mu=1$, 
$b=0$ and $\alpha=0.5$,  at different times. We observe that the solution oscillates with a slow decay, which reflects in particular the  wave feature of the model \eqref{main} when $b=0$. The oscillations in the figure are not numerical artifacts. Indeed, they are inherited from the $L^2$-projection $P_hv$ which is oscillatory as shown in Figure \ref{Fig:al=0.5,b=0}(a). Furthermore, as observed from Figure \ref{Fig:b=0}, the solution is diffusive (the oscillations are quickly damped) for small $\aone$ and the oscillations are more pronounced for larger $\aone$, with a slower decay, showing again the wave feature of the model.  In Figure \ref{Fig:al=0.5,b=1}, we display the profile of the numerical solution when $b=a=\mu=1$ and $\alpha=\beta=0.5$. The oscillations  are quickly damped and  not visible as in the previous case. For further details on the behavior of the solution of \eqref{main}, see \cite{B1} for the case with $a= 0$ and \cite{B1,B5} when $a \neq 0$.

Lastly, we consider the inhomogeneous problem \eqref{main} with a zero initial data $v$ and briefly examine the spatial convergence rates established in Section \ref{sec:discrete}. We consider the following cases with smooth and nonsmooth right-hand side data $f$:
\begin{itemize}
\item[ (c)] $v=0$ and $f(x,y,t)=\left(2t+\dfrac{2a t^{1-\aone}}{\Gamma(2-\aone)}+8\pi^2\mu t^2+\dfrac{16\pi^2\mu b t^{2-\atwo}}{\Gamma(3-\atwo)}\right)\sin(2\pi x)\sin(2\pi y)$. 
\item[ (d)] $v=0$ and  $f(x,y,t)= (1+t^{0.2})\chi_{(0,1/2]\times(0,1)}(x,y)$. 
\end{itemize}

For case (c), the  exact solution is given by $u(x,y,t)=t^2\sin(2\pi x)\sin(2\pi y)$, whereas an explicit   
form is not available in case (d). A reference solution  is then computed on the very fine grid.
 The results reported in Table \ref{table:Non-homogeneous} are obtained using the standard Galerkin FEM in space and  the SBD method in time. To validate the error bounds derived in Section \ref{sec:discrete},  we choose a very small time step $\tau=1/500$ and examine the spatial accuracy at the final time $T=0.5$. From the table, a convergence rate  of order $O(h^2)$ is observed, for both smooth and nonsmooth right-hand side data, which clearly confirm our theoretical  results.
 % in Theorem \ref{nonsmooth-FE-n}. 

%%%%%%%%%%%%%%%%%%%%%%%%%%%%%%%%%%%%%%%%%%%%%%%%%%%%%%%%%%%%%%%%%%%%%%%%%%%%%%
%\bibliographystyle{plain}

\end{document}